%Version of May 21, 2007

%\documentclass[envcountsame,envcountsect]{svmult}

% choose options for [] as required from the list
% in the Reference Guide, Sect. 2.2

\documentclass[10pt]{amsart}
\usepackage{latexsym,amsmath,amssymb,amscd}
\textwidth16 cm
\textheight22 cm
\oddsidemargin.4cm
\evensidemargin.4cm

\def\today{\ifcase \month \or
   January \or February \or March \or April \or
   May \or June \or July \or August \or
   September \or October \or November \or December \fi
   \space\number\day , \number\year}

\begin{document}

\DeclareRobustCommand{\SkipTocEntry}[4]{} 

\makeatletter
\@addtoreset{figure}{section}
\def\thefigure{\thesection.\@arabic\c@figure}
\def\fps@figure{h,t}
\@addtoreset{table}{bsection}

\def\thetable{\thesection.\@arabic\c@table}
\def\fps@table{h, t}
\@addtoreset{equation}{section}
\def\theequation{%\thesection.
\arabic{equation}}
\makeatother

\newcommand{\bfi}{\bfseries\itshape}

\newtheorem{theorem}{Theorem}
\newtheorem{acknowledgment}[theorem]{Acknowledgment}
\newtheorem{algorithm}[theorem]{Algorithm}
\newtheorem{axiom}[theorem]{Axiom}
\newtheorem{case}[theorem]{Case}
\newtheorem{claim}[theorem]{Claim}
\newtheorem{conclusion}[theorem]{Conclusion}
\newtheorem{condition}[theorem]{Condition}
\newtheorem{conjecture}[theorem]{Conjecture}
\newtheorem{construction}[theorem]{Construction}
\newtheorem{corollary}[theorem]{Corollary}
\newtheorem{criterion}[theorem]{Criterion}
\newtheorem{definition}[theorem]{Definition}
\newtheorem{example}[theorem]{Example}
\newtheorem{lemma}[theorem]{Lemma}
\newtheorem{notation}[theorem]{Notation}
\newtheorem{problem}[theorem]{Problem}
\newtheorem{proposition}[theorem]{Proposition}
\newtheorem{remark}[theorem]{Remark}
\numberwithin{theorem}{section}
\numberwithin{equation}{section}

%%% Todo
\newcommand{\todo}[1]{\vspace{5 mm}\par \noindent
\framebox{\begin{minipage}[c]{0.95 \textwidth}
\tt #1 \end{minipage}}\vspace{5 mm}\par}
%%%

\newcommand{\1}{{\bf 1}}
\newcommand{\Ad}{{\rm Ad}}
\newcommand{\Alg}{{\rm Alg}\,}
\newcommand{\Aut}{{\rm Aut}\,}
\newcommand{\ad}{{\rm ad}}
\newcommand{\Cb}{{{\mathcal C}_b}}
\newcommand{\Ci}{{\mathcal C}^\infty}
\newcommand{\de}{{\rm d}}
\newcommand{\ee}{{\rm e}}
\newcommand{\ev}{{\rm ev}}
\newcommand{\fin}{{\rm fin}}
\newcommand{\id}{{\rm id}}
\newcommand{\ie}{{\rm i}}
\newcommand{\GL}{{\rm GL}}
\newcommand{\gl}{{{\mathfrak g}{\mathfrak l}}}
\newcommand{\Hom}{{\rm Hom}\,}
\newcommand{\Img}{{\rm Im}\,}
\newcommand{\Ker}{{\rm Ker}\,}
\newcommand{\lf}{{\rm l}}
\newcommand{\Lie}{\text{\bf L}}
\newcommand{\LUCb}{{{\mathcal L}{\mathcal U}{\mathcal C}_b}}
\newcommand{\m}{\textbf{m}}
\newcommand{\OO}{{\rm O}}
\newcommand{\Part}{{\rm Part}}
\newcommand{\pr}{{\rm pr}}
\newcommand{\Ran}{{\rm Ran}\,}
\newcommand{\rank}{{\rm rank}\,}
\renewcommand{\Re}{{\rm Re}\,}
\newcommand{\RUCb}{{{\mathcal R}{\mathcal U}{\mathcal C}_b}}
\newcommand{\rb}{\text{\bf r}}
\newcommand{\Sp}{{\rm Sp}}
\renewcommand{\sp}{{{\mathfrak s}{\mathfrak p}}}
\newcommand{\spa}{{\rm span}\,}
\renewcommand{\ss}{{\rm ss}}
\newcommand{\Tr}{{\rm Tr}\,}
\newcommand{\UCb}{{{\mathcal U}{\mathcal C}_b}}

\newcommand{\G}{{\rm G}}
\newcommand{\U}{{\rm U}}

\newcommand{\Ac}{{\mathcal A}}
\newcommand{\Bc}{{\mathcal B}}
\newcommand{\Cc}{{\mathcal C}}
\newcommand{\Ec}{{\mathcal E}}
\newcommand{\Dc}{{\mathcal D}}
\newcommand{\Hc}{{\mathcal H}}
\newcommand{\Ic}{{\mathcal I}}
\newcommand{\Jc}{{\mathcal J}}
\newcommand{\Lc}{{\mathcal L}}
\newcommand{\Nc}{{\mathcal N}}
\newcommand{\Oc}{{\mathcal O}}
\newcommand{\Pc}{{\mathcal P}}
\newcommand{\Qc}{{\mathcal Q}}
\newcommand{\Rc}{{\mathcal R}}
\newcommand{\Tc}{{\mathcal T}}
\newcommand{\Vc}{{\mathcal V}}
\newcommand{\Uc}{{\mathcal U}}

\newcommand{\Bg}{{\mathfrak B}}
\newcommand{\Fg}{{\mathfrak F}}
\newcommand{\Gg}{{\mathfrak G}}
\newcommand{\Ig}{{\mathfrak I}}
\newcommand{\Jg}{{\mathfrak J}}
\newcommand{\Lg}{{\mathfrak L}}
\newcommand{\Mg}{{\mathfrak M}}
\newcommand{\Pg}{{\mathfrak P}}
\newcommand{\Sg}{{\mathfrak S}}
\newcommand{\Xg}{{\mathfrak X}}
\newcommand{\Yg}{{\mathfrak Y}}
\newcommand{\Zg}{{\mathfrak Z}}

\newcommand{\ag}{{\mathfrak a}}
\newcommand{\bg}{{\mathfrak b}}
\newcommand{\dg}{{\mathfrak d}}
\renewcommand{\gg}{{\mathfrak g}}
\newcommand{\hg}{{\mathfrak h}}
\newcommand{\kg}{{\mathfrak k}}
\newcommand{\mg}{{\mathfrak m}}
\newcommand{\n}{{\mathfrak n}}
\newcommand{\og}{{\mathfrak o}}
\newcommand{\pg}{{\mathfrak p}}
\newcommand{\sg}{{\mathfrak s}}
\newcommand{\tg}{{\mathfrak t}}
\newcommand{\ug}{{\mathfrak u}}
\newcommand{\zg}{{\mathfrak z}}

\makeatletter
\title[Infinite-dimensional reductive Lie groups]{Functional Analytic Background for 
a Theory of Infinite-Dimensional Reductive Lie Groups}
\thanks{%File name: \texttt{red_a.tex} 
%(
\texttt{Date: \today}
%)
}
%*\titlerunning{Infinite-dimensional reductive Lie groups}
% Use \titlerunning{Short Title} for an abbreviated version of
% your contribution title if the original one is too long
\author{Daniel Belti\c t\u a%\inst{1}
%\andName of Author\inst{2}
}
% Use \authorrunning{Short Title} for an abbreviated version of
% your contribution title if the original one is too long
\address{Institute of Mathematics ``Simion Stoilow'' of 
the Romanian Academy, 
P.O. Box 1-764, RO-014700 Bucharest, Romania} 
\email{Daniel.Beltita@imar.ro}
% Use the package "url.sty" to avoid
% problems with special characters
% used in your e-mail or web address
%
\keywords{reductive Lie group; group decomposition; amenable group; 
operator ideal; triangular integral}
\subjclass[2000]{Primary 22E65; Secondary 22E46,47B10,47L20,58B25}
%\translator{}
%\dedicatory{}
%\thanks{\textit{File name}: \texttt{blank.tex}}
%\date{\today}
\makeatother

%\maketitle

\begin{abstract}
Motivated by the interesting and yet scattered developments 
in representation theory of Banach-Lie groups, 
we discuss several  
functional analytic issues which should underlie the notion of 
infinite-dimensional reductive Lie group: 
norm ideals, triangular integrals, operator factorizations, and amenability. 
\end{abstract}

\maketitle

\section{What a reductive Lie group is supposed to be}

\noindent\textbf{Introduction.}
We approach the problem of finding 
an appropriate infinite-dimensional version 
of reductive Lie group. 
The discussion is motivated by the need to have a reasonably general setting 
where the representation theoretic properties of the classical Lie groups 
associated with the Schatten ideals of Hilbert space operators 
---in the sense of \cite{dlH72}--- 
can be investigated in a systematic way.  
Thus the theory of operator ideals 
(see e.g., \cite{GK69}, \cite{GK70}, \cite{DFWW04}, and \cite{KW07}) 
provides 
the natural background for the present exposition. 

The ideas and methods of representation 
theory of finite-dimensional Lie groups
cannot possibly be extended to the setting of Banach-Lie groups 
in a direct manner. 
Any attempt to do that fails because of 
some phenomena specific to the infinite-dimensional Lie groups:  
there exist Lie algebras that do not arise from Lie groups 
(see e.g., \cite{Ne06} or \cite{Be06}), 
closed subgroups of Lie groups need not be 
Lie groups in the relative topology
(see e.g., \cite{Up85}), 
one does not know how to construct smooth structures on homogeneous spaces 
unless one is able to find a direct complement 
of the Lie subalgebra in the ambient Lie algebra 
(see \cite{Up85}, \cite{BP05}, \cite{Ga06}, or \cite{Be06}), 
there exists no Haar measure on 
topological groups which are not locally compact
(\cite{We40}),  
and finally every infinite-dimensional Banach space 
is the model space of some abelian Lie group 
without any non-trivial continuous representations 
(see \cite{Ba83}). 

Nevertheless, the study of representation theoretic properties 
of some specific Banach-Lie groups has led to a number of 
interesting results; see for instance the papers 
\cite{SV75}, \cite{Bo80}, \cite{Pi90}, \cite{Ne98}, \cite{NO98}, 
\cite{BR06} or \cite{BG07}. 
It seems reasonable to try to find out 
a class of Banach-Lie groups whose representations 
can be studied in a coherent fashion following the pattern 
of representation theory of finite-dimensional reductive Lie groups. 
The aim of the present paper is to survey some of the ideas 
and notions that might eventually lead to the description 
of a class of Banach-Lie groups appropriate 
for the purposes of such a representation theory. 
The exposition is streamlined by a number of phenomena 
that play a central role 
in the classical theory of reductive Lie groups:  
Cartan decompositions, Iwasawa decompositions, Harish-Chandra decompositions, 
and existence of invariant measures. 
By way of describing appropriate versions of these phenomena 
in infinite dimensions,  
the paper provides a self-contained discussion of a number of  
functional analytic issues which should underlay the notion of 
infinite-dimensional reductive Lie group: 
triangular integrals, operator factorizations, and amenability. 

\medbreak

\noindent\textbf{Finite-dimensional reductive Lie groups.}
In order to make clear the structures we shall be looking for 
in infinite dimensions, we now recall the classical setting.  
Remarks \ref{red_alg} and \ref{red_gr} basically concern 
the matrix Lie algebras/groups, where reductivity means 
stability under the operation of taking adjoints of matrices. 
(These remarks will be our main motivation for 
the discussion of $\Phi$-reductivity in Section~\ref{Sect5}.) 
The following general definition is the one used in \cite{Kn96}. 

\begin{definition}\label{reductive}
\normalfont
A finite-dimensional \emph{reductive Lie group} 
is actually a 4-tuple $(G,K,\theta,B)$, 
where $G$ is a finite-dimensional Lie group with 
the Lie algebra $\gg$, 
$K$ is a compact subgroup of $G$ with the Lie algebra $\kg$, 
$\theta\colon\gg\to\gg$ is an involutive automorphism, 
and $B\colon\gg\times\gg\to{\mathbb R}$ is a 
nondegenerate symmetric bilinear form 
which is $\Ad(G)$-invariant and $\theta$-invariant, 
such that the following conditions are satisfied: 
\begin{itemize}
\item[{\rm(i)}] $\gg$ is a reductive Lie algebra with 
the complexification denoted $\gg^{\mathbb C}$; 
\item[{\rm(ii)}] $\kg=\Ker(\theta-\1)$; 
\item[{\rm(iii)}] if we denote $\pg=\Ker(\theta+\1)$, 
then $\gg=\kg\dotplus\pg$ 
and we have $B(\kg,\pg)=\{0\}$, 
$B$ is positive definite on $\pg$, 
and negative definite on $\kg$; 
\item[{\rm(iv)}] the mapping 
$K\times\pg\to G$, $(k,X)\mapsto k\cdot\exp_GX$, 
is a diffeomorphism; 
\item[{\rm(v)}] for every $g\in G$ the automorphism 
$\Ad(g)\colon\gg^{\mathbb C}\to\gg^{\mathbb C}$ 
belongs 
to the connected component 
of $\1\in\Aut(\gg^{\mathbb C})$. 
\end{itemize} 
We say that the Lie group $G$ itself is a reductive Lie group 
if the other items $K$, $\theta$, and $B$ are clear from the context. 
\qed
\end{definition}

\begin{remark}\label{red_alg}
\normalfont
Let $\gg$ be a real finite-dimensional Lie algebra. 
Then $\gg$ is a \emph{reductive Lie algebra} if and only if 
it is isomorphic to a real Lie subalgebra $\gg_1$ 
of the matrix algebra $M_n({\mathbb C})$ for some integer $n\ge1$
such that for every $X\in\gg_1$ we have $X^*\in\gg_1$. 
In addition, if $\gg$ were a complex Lie algebra, 
then $\gg_1$ could be chosen to be 
a complex Lie subalgebra of $M_n({\mathbb C})$. 
\qed
\end{remark}

\begin{remark}\label{red_gr}
\normalfont
Let $G$ be a connected closed subgroup of $\GL(n,{\mathbb C})$ 
for some $n\ge1$ such that for every $g\in G$ we have $g^*\in G$. 
Then $G$ is a reductive  Lie group with $K=G\cap\U(n)$, 
$\theta(X)=-X^*$ for all $X\in\gg=\Lie(G)\subseteq M_n({\mathbb C})$ 
and $B(X,Y)=\Re(\Tr(XY))$ for all $X,Y\in\gg$ 
(see Example~2 in Section~2 of Chapter~VII of \cite{Kn96}). 

It is not difficult to prove that coverings with finitely many sheets 
of any group $G$ as above are reductive Lie groups 
---the covering groups of this type are precisely 
the \emph{connected} reductive groups in the sense of Definition~2.5 
of \cite{Vo00}. 
(See Proposition~\ref{LL2} below for the way the Cartan decompositions 
lift to covering groups.) 
In particular every connected semisimple Lie group with finite center 
is a reductive Lie group in the sense of Definition~\ref{reductive} above 
(Example~1 in Section~2 of Chapter~VII of \cite{Kn96}).
\qed
\end{remark}

\noindent\textbf{Some conventions and notation.}
Throughout the present paper we denote by 
$\Hc$ an infinite-dimen\-sion\-al complex separable Hilbert space, 
by $\Bc(\Hc)$ the set of all bounded linear operators on $\Hc$ 
and by $\Fg$ the two-sided ideal of $\Bc(\Hc)$ 
consisting of all finite-rank operators. 
Some convenient references for infinite-dimensional Lie groups 
with a functional analytic flavor are 
\cite{dlH72}, \cite{Up85}, \cite{Ne04}, \cite{Ne06}, and \cite{Be06}. 
As in the latter reference, 
we shall always denote by $\Lie(\cdot)$ the Lie functor 
from the category of Banach-Lie groups into 
the category of Banach-Lie algebras.  
We also adopt the convention that Lie groups are denoted by 
Roman capitals and their Lie algebras are denoted by 
the corresponding Gothic lower case letters.

\section{Triangular integrals and factorizations} 

\noindent\textbf{Norm ideals.}
The norm ideals will play a critical role 
for the present exposition.  
In fact our candidates for infinite-dimensional reductive groups 
will be Banach-Lie groups whose model spaces are norm ideals.  

\begin{definition}\label{ideals1}
\normalfont
By \emph{norm ideal}
we mean a two-sided ideal
$\mathfrak{I}$ of
${\mathcal B}({\mathcal H})$
equipped with a norm $\|\cdot\|_\mathfrak{I}$ satisfying
$\|T\|\le\|T\|_\mathfrak{I}=\|T^*\|_\mathfrak{I}$ and
$\|ATB\|_\mathfrak{I}\le\|A\|\,\|T\|_\mathfrak{I} \,\|B\|$
whenever $A,B\in{\mathcal B}({\mathcal H})$.
\qed
\end{definition}

\begin{definition}\label{ideals2}
\normalfont
Let $\widehat{c}$ be the vector space of all sequences of
real
numbers $\{\xi_j\}_{j\ge1}$
such that $\xi_j=0$ for all but finitely many indices.
A \emph{symmetric norming function}
is a function
$\Phi\colon\widehat{c}\to{\mathbb R}$
satisfying the following conditions:
\begin{itemize}
\item[{\rm(i)}] the function $\Phi(\cdot)$ is a norm on 
the linear space $\widehat{c}$ and $\Phi((1,0,0,\dots))=1$; 
\item[{\rm(ii)}] $\Phi(\{\xi_j\}_{j\ge1})=\Phi(\{\xi_{\pi(j)}\}_{j\ge1})$
whenever $\{\xi_j\}_{j\ge1}\in\widehat{c}$ and
$\pi\colon{\mathbb N}\setminus\{0\}\to{\mathbb N}\setminus\{0\}$
is bijective.
\end{itemize}
Any symmetric norming function $\Phi$ gives rise to two norm ideals
${\mathfrak S}_\Phi$ and ${\mathfrak S}_\Phi^{(0)}$
as follows.
For every bounded sequence of real numbers $\xi=\{\xi_j\}_{j\ge1}$ define
$\Phi(\xi):=\sup_{n\ge1}\Phi(\xi_1,\xi_2,\dots,\xi_n,0,0,\dots)\in[0,\infty]$.
For all $T\in{\mathcal B}({\mathcal H})$ denote
$$\|T\|_\Phi:=\Phi(\{s_j(T)\}_{j\ge1})\in[0,\infty],$$
where
$s_j(T)=\inf\{\|T-F\|\mid F\in{\mathcal B}({\mathcal H}),\,{\rm
rank}\,F<j\}$ whenever
$j\ge1$.
With this notation we can define
$$
\Sg_\Phi^{(0)}:=\overline{\Fg}^{\|\cdot\|_\Phi}
\subseteq\{T\in\Bc(\Hc)\mid\Vert T\Vert_\Phi<\infty\}=:\Sg_\Phi$$
that is, ${\mathfrak S}_\Phi^{(0)}$ is the $\|\cdot\|_\Phi$-closure of
the ideal of finite-rank operators ${\mathfrak F}$
in ${\mathfrak S}_\Phi$.
Then $\|\cdot\|_\Phi$ is a norm making ${\mathfrak S}_\Phi$ and
${\mathfrak S}_\Phi^{(0)}$
into norm ideals
(see \S 4 in Chapter~III in \cite{GK69}). 
We say that $\Phi$ is a \emph{mononormalizing} symmetric norming function 
if $\Sg_\Phi^{(0)}=\Sg_\Phi$. 
If $1\le p<\infty$, then the formula 
$\Phi_p(\xi)=\|\xi\|_{\ell^p}$ whenever
$\xi\in\widehat{c}$ defines a mononormalizing symmetric norming function 
and the corresponding norm ideal $\Sg_p:=\Sg_{\Phi_p}$ is 
the \emph{$p$-th Schatten ideal}. 
In the special case $p=2$ we call
$\Sg_2$ the \emph{Hilbert-Schmidt ideal}.
\qed
\end{definition}

\begin{remark}\label{ideals3}
\normalfont
Every separable norm ideal is equal to $\Sg_\Phi^{(0)}$
for some symmetric norming function $\Phi$
(see Theorem~6.2 in Chapter~III in \cite{GK69}).
\qed
\end{remark}

\begin{remark}\label{ideals4}
\normalfont
For every symmetric norming function $\Phi\colon\widehat{c}\to{\mathbb R}$
there exists a unique symmetric norming function
$\Phi^*\colon\widehat{c}\to{\mathbb R}$
such that
$$\Phi^*(\eta)=\sup\Bigl\{
\frac{1}{\Phi(\xi)}\sum\limits_{j=1}^\infty\xi_j\eta_j
\;\Big|\;\xi=\{\xi_j\}_{j\ge1}\in\widehat{c}\text{ and }
\xi_1\ge\xi_2\ge\cdots\ge0
\Bigr\}$$
whenever $\eta=\{\eta_j\}_{j\ge1}\in\widehat{c}$ and
$\eta_1\ge\eta_2\ge\cdots\ge0$.
The function $\Phi^*$ is said to be \emph{adjoint} to $\Phi$
and we always have $(\Phi^*)^*=\Phi$
(see Theorem~11.1 in Chapter~III in \cite{GK69}).
For instance, if $1\le p,q\le\infty$, $1/p+1/q=1$,
$\Phi_p(\xi)=\|\xi\|_{\ell^p}$ and $\Phi_q(\xi)=\|\xi\|_{\ell^q}$ whenever
$\xi\in\widehat{c}$,
then $(\Phi_p)^*=\Phi_q$.
If $\Phi$ is any symmetric norming function then the topological dual of
the Banach space
$\Sg_\Phi^{(0)}$ is isometrically isomorphic to $\Sg_{\Phi^*}$
by the duality pairing
$\Sg_{\Phi^*}\times\Sg_\Phi^{(0)}\to{\mathbb C}$, 
$(T,S)\mapsto\Tr(TS)$
(see Theorems 12.2~and~12.4 in Chapter~III in \cite{GK69}). 
\qed
\end{remark}

The next definition describes the Boyd indices of 
a symmetric norming function. 
Some convenient references for this notion are 
Section~3 in \cite{Ar78} and subsections~2.17--19 in \cite{DFWW04}. 

\begin{definition}\label{boyd}
\normalfont
Let $\Phi$ be a symmetric norming function. 
For each $m\ge1$ define the linear operators 
$D_m\colon\widehat{c}\to\widehat{c}$ and 
$D_{1/m}\colon\widehat{c}\to\widehat{c}$ 
by 
$$ 
D_m\xi=(\underbrace{\xi_1,\dots,\xi_1}_{m\text{ times}},
\underbrace{\xi_2,\dots,\xi_2}_{m\text{ times}},\dots) 
\text{ and }
D_{1/m}\xi=
\Bigl(\frac{1}{m}\sum_{i=1}^m\xi_i,\frac{1}{m}\sum_{i=m+1}^{2m}\xi_i,
\dots\Bigr)
$$
for arbitrary $\xi=(\xi_1,\xi_2,\dots)\in\widehat{c}$. 
We shall think of $D_m$ and $D_{1/m}$ as linear operators 
on the space $\widehat{c}$ equipped with the norm $\Phi(\cdot)$, 
so that it makes sense to speak about the norms of these operators. 
The \emph{Boyd indices} of the symmetric norming function $\Phi$ 
are defined by 
$$p_\Phi=\sup_{m\ge1}\frac{\log m}{\log\Vert D_m\Vert} 
\quad\text{ and }\quad
q_\Phi=\inf_{m\ge1}\frac{\log (1/m)}{\log\Vert D_{1/m}\Vert}$$
and we have $1\le p_\Phi\le q_\Phi\le\infty$. 
We shall say that these indices are \emph{nontrivial}
if $1<p_\Phi\le q_\Phi<\infty$. 
\qed
\end{definition}

\begin{remark}\label{compute_boyd}
\normalfont
The Boyd indices of any symmetric norming function $\Phi$ are related to the 
Boyd indices of its adjoint $\Phi^*$ by the equations 
$1/p_\Phi+1/q_{\Phi^*}=1/p_{\Phi^*}+1/q_\Phi=1$. 
If $1<r<\infty$ and $\Phi(\cdot)=\Vert\cdot\Vert_{\ell^r}$ 
then for all $m\ge1$ we have 
$\Vert D_m\Vert=m^{1/r}$ and $\Vert D_{1/m}\Vert=m^{-1/r}$, 
hence in this case $p_\Phi=q_\Phi=r$. 
\qed
\end{remark}

The Boyd indices of symmetric norming functions are 
important for our present purposes because of 
the following interpolation theorem. 

\begin{theorem}\label{interpolation}
Let $\Phi$ be a mononormalizing symmetric norming function and assume that 
$1\le p<p_\Phi\le q_\Phi<q\le\infty$. 
Then the following assertions hold: 
\begin{itemize}
\item[{\rm(a)}] We have $\Sg_p\subseteq\Sg_\Phi\subseteq\Sg_q$.
\item[{\rm(b)}] There exists a constant $M_\Phi>0$ with 
the following property: 
If $T\colon\Sg_q\to\Sg_q$ is a bounded linear operator such that 
$T(\Sg_p)\subseteq\Sg_p$, then $T(\Sg_\Phi)\subseteq\Sg_\Phi$ 
and 
$\Vert T|_{\Sg_\Phi}\Vert\le 
M_\Phi\max\{\Vert T|_{\Sg_p}\Vert,\Vert T|_{\Sg_q}\Vert\}$.
\end{itemize} 
\end{theorem}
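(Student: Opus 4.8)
The plan is to reduce the whole statement to two scalar estimates for the symmetric norming function $\Phi$ that encode the Boyd index hypotheses. By standard submultiplicativity properties of the dilations $D_m,D_{1/m}$ of Definition~\ref{boyd} (see also \cite{Ar78}, \cite{DFWW04}), the assumptions $1\le p<p_\Phi$ and $q_\Phi<q\le\infty$ let one fix $p'\in(p,p_\Phi)$, $q''\in(q_\Phi,q)$ and a constant $C_0\ge1$ with
\[
\Vert D_m\Vert\le C_0\,m^{1/p'}\qquad\text{and}\qquad\Vert D_{1/m}\Vert\le C_0\,m^{-1/q''}\qquad(m\ge1).
\]
Besides these I will only use the monotonicity of symmetric norming functions ($0\le\xi_j\le\eta_j$ for all $j$, with $\xi,\eta$ nonincreasing, implies $\Phi(\xi)\le\Phi(\eta)$; \cite{GK69}, Ch.~III), the subadditivity $s_{i+j-1}(A+B)\le s_i(A)+s_j(B)$ of the singular numbers, the elementary inequality $s_n(F)\le n^{-1/p}\Vert F\Vert_{\Sg_p}$ (with $n^{-1/\infty}=1$), and the trace duality of Remark~\ref{ideals4}.

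\emph{Part (a).} I would prove $\Sg_p\subseteq\Sg_\Phi$ at the level of singular numbers. If $\xi$ is nonincreasing and lies in $\ell^p$, dominate it by the sequence $\widetilde\xi$ that equals $\xi_{2^k}$ on each dyadic block $[2^k,2^{k+1})$; then $\Phi(\xi)\le\Phi(\widetilde\xi)\le\sum_{k\ge0}\xi_{2^k}\,\Phi(e^{(k)})$, where $e^{(k)}$ is the sequence with $2^k$ ones and zeros elsewhere, and $\Phi(e^{(k)})\le\Vert D_{2^k}\Vert\,\Phi((1,0,0,\dots))=\Vert D_{2^k}\Vert\le C_0\,2^{k/p'}$. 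Since $2^{k/p}\xi_{2^k}$ is bounded by a constant times the $\ell^p$-norm of $\xi$ over the previous block and $1/p'-1/p<0$, summing the geometric factor $2^{k(1/p'-1/p)}$ (via H\"older when $p>1$) gives $\Phi(\xi)\le C\Vert\xi\Vert_{\ell^p}$; applied to $\xi=(s_j(T))_j$ this yields a bounded inclusion $\Sg_p\hookrightarrow\Sg_\Phi$. The inclusion $\Sg_\Phi\subseteq\Sg_q$ is trivial for $q=\infty$, and for $q<\infty$ the identity $1/p_{\Phi^*}+1/q_\Phi=1$ of Remark~\ref{compute_boyd} turns $q_\Phi<q$ into $q'<p_{\Phi^*}$, so the inclusion just proved, applied to $\Phi^*$ and the exponent $q'$, gives a bounded map $\Sg_{q'}^{(0)}\to\Sg_{\Phi^*}^{(0)}$; its Banach-space adjoint, read through the trace pairings of Remark~\ref{ideals4} and the identifications $(\Sg_{\Phi^*}^{(0)})^*=\Sg_\Phi$, $(\Sg_{q'}^{(0)})^*=\Sg_q$, is precisely the inclusion $\Sg_\Phi\hookrightarrow\Sg_q$.

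\emph{Part (b): reduction.} Put $M:=\max\{\Vert T|_{\Sg_p}\Vert,\Vert T|_{\Sg_q}\Vert\}$. Since $\Phi$ is mononormalizing, $\Sg_\Phi=\Sg_\Phi^{(0)}$ consists of compact operators, so each $x\in\Sg_\Phi$ has a singular value decomposition; for $n\ge1$ write $x=x_n'+x_n''$ with $x_n'\in\Fg\subseteq\Sg_p$ the rank-$n$ truncation and $x_n''\in\Sg_q$ the tail (it lies in $\Sg_q$ by part~(a)), so that $\Vert Tx_n'\Vert_{\Sg_p}\le M\bigl(\sum_{j\le n}s_j(x)^p\bigr)^{1/p}$ and $\Vert Tx_n''\Vert_{\Sg_q}\le M\bigl(\sum_{j>n}s_j(x)^q\bigr)^{1/q}$. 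Combining $s_{2n-1}(Tx)\le s_n(Tx_n')+s_n(Tx_n'')$ with $s_n(F)\le n^{-1/p}\Vert F\Vert_{\Sg_p}$ and $s_n(G)\le n^{-1/q}\Vert G\Vert_{\Sg_q}$ gives, with $a:=(s_j(x))_j$,
\[
s_{2n-1}(Tx)\le M\beta_n,\qquad\beta_n:=n^{-1/p}\Bigl(\sum_{j\le n}a_j^p\Bigr)^{1/p}+n^{-1/q}\Bigl(\sum_{j>n}a_j^q\Bigr)^{1/q}.
\]
As $\beta$ is nonincreasing, $s_k(Tx)\le M\,(D_2\beta)_k$ for all $k$, so $\Vert Tx\Vert_\Phi=\Phi\bigl((s_k(Tx))_k\bigr)\le M\,\Vert D_2\Vert\,\Phi(\beta)$. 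Hence everything comes down to the estimate $\Phi(\beta)\le M_\Phi'\,\Phi(a)$ with $M_\Phi'$ depending only on $\Phi$; granting it, $M_\Phi:=\Vert D_2\Vert\,M_\Phi'$ works and in particular $Tx\in\Sg_\Phi$.

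\emph{Part (b): the analytic core.} The inequality $\Phi(\beta)\le M_\Phi'\,\Phi(a)$ — the boundedness on the symmetric sequence space of $\Phi$ of the discrete $\ell^p$--$\ell^q$ Calder\'on operator $a\mapsto\beta$ — is where the Boyd conditions are genuinely used, and I expect it to be the main obstacle. Split $\beta=\gamma+\rho$, $\gamma_n=n^{-1/p}(\sum_{j\le n}a_j^p)^{1/p}$, $\rho_n=n^{-1/q}(\sum_{j>n}a_j^q)^{1/q}$, both nonincreasing. For $\gamma$, a dyadic estimate as in part~(a) (using $(\sum_i t_i)^{1/p}\le\sum_i t_i^{1/p}$, valid since $p\ge1$) gives $\gamma_{2^l}\le\sum_{i=0}^{l}2^{-i/p}a_{2^{l-i}}$, and comparing indices shows $a_{2^{l-i}}\le(D_{2^{i+1}}a)_n$ whenever $0\le i\le l$ and $n\in[2^l,2^{l+1})$; thus $\gamma$ is dominated coordinatewise by $\sum_{i\ge0}2^{-i/p}D_{2^{i+1}}a$, whence $\Phi(\gamma)\le\sum_{i\ge0}2^{-i/p}\Vert D_{2^{i+1}}\Vert\,\Phi(a)\le C_0\,2^{1/p'}\bigl(\sum_{i\ge0}2^{i(1/p'-1/p)}\bigr)\Phi(a)$, a convergent series precisely because $p<p'$. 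Symmetrically, $\rho_{2^l}\le a_{2^l}+\sum_{m\ge1}2^{m/q}a_{2^{l+m}}$ with $a_{2^l}\le(D_2a)_n$ and $a_{2^{l+m}}\le(D_{1/2^{m-1}}a)_n$ for $n\in[2^l,2^{l+1})$, so $\rho$ is dominated by $D_2a+\sum_{m\ge1}2^{m/q}D_{1/2^{m-1}}a$ and $\Phi(\rho)\le\bigl(\Vert D_2\Vert+C_0\sum_{m\ge1}2^{m/q-(m-1)/q''}\bigr)\Phi(a)$, convergent precisely because $q''<q$ (the case $q=\infty$ being trivial, as then $\rho$ is a shift of $a$). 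Adding the two bounds gives $\Phi(\beta)\le M_\Phi'\,\Phi(a)$, and with it $\Vert T|_{\Sg_\Phi}\Vert\le M_\Phi\max\{\Vert T|_{\Sg_p}\Vert,\Vert T|_{\Sg_q}\Vert\}$, completing the proof.
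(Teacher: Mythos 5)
Your proof is correct, but it is worth noting that the paper does not actually prove Theorem~\ref{interpolation} at all: its ``proof'' is the single citation of Corollary~3.4(i) in \cite{Ar78}. What you have written is, in effect, a self-contained reconstruction of the argument behind that reference, namely the Boyd-type interpolation mechanism: reduce the operator statement to the boundedness on the symmetric sequence space of the discrete $\ell^p$--$\ell^q$ Calder\'on operator $a\mapsto\beta$, and prove the latter by dyadic domination through the dilations $D_m$, $D_{1/m}$, with the strict inequalities $p<p_\Phi$ and $q_\Phi<q$ entering exactly as the convergence of the two geometric series. All the individual steps check out: the Fekete/submultiplicativity bound $\Vert D_m\Vert\le C_0 m^{1/p'}$, the block estimate for $\Sg_p\subseteq\Sg_\Phi$, the trace-duality argument for $\Sg_\Phi\subseteq\Sg_q$ via $1/p_{\Phi^*}+1/q_\Phi=1$ and $(\Phi^*)^*=\Phi$, the Ky~Fan inequality $s_{2n-1}(A+B)\le s_n(A)+s_n(B)$ applied to the rank-$n$ truncation, and the coordinatewise dominations $a_{2^{l-i}}\le(D_{2^{i+1}}a)_n$ and $a_{2^{l+m}}\le(D_{1/2^{m-1}}a)_n$ for $n\in[2^l,2^{l+1})$. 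The only points needing (routine) care, which you may wish to flag explicitly, are that $\Phi$ applied to infinite sums and to non-finitely-supported sequences must be interpreted via the sup over truncations of Definition~\ref{ideals2}, and that the boundedness of $T|_{\Sg_p}$ implicit in the statement follows from the closed graph theorem. What your approach buys is independence from \cite{Ar78} using only the objects already introduced in the paper (Definition~\ref{boyd}, Remarks \ref{ideals4} and \ref{compute_boyd}); what the citation buys is brevity and access to the sharper statements Arazy proves in the same place.
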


\begin{proof}
See Corollary~3.4(i) in \cite{Ar78}. 
\end{proof}

\noindent\textbf{Triangular integrals.}
The triangular integral is a suitable infinite-dimension\-al version 
of the operation of taking the upper triangular part 
(the lower diagonal part, or the diagonal, respectively) 
of a square matrix. 
The classical reference is \cite{GK70} 
(see also \cite{Er78}). 
We are going to describe this idea 
in a setting which is slightly more general than the classical one. 

\begin{definition}\label{tri1}
\normalfont
Let $\Bg$ be a unital associative involutive Banach algebra 
and $\Ic$ a 
\emph{contractive $\Bg$-bimodule}, that is, 
$\Ic$ is a Banach space equipped with a trilinear map 
$$\Bg\times\Ic\times\Bg\to\Ic,\quad (b_1,X,b_2)\mapsto b_1Xb_2,$$
and an involutive isometric antilinear map 
$\Ic\to\Ic$, $X\mapsto X^*$, 
such that 
$\Ic$ is an algebraic $\Bg$-bimodule and in addition 
$(b_1Xb_2)^*=b_2^*X^*b_1^*$ and  
$\Vert b_1Xb_2\Vert\le\Vert b_1\Vert \Vert X\Vert \Vert b_2\Vert$ 
whenever $b_1,b_2\in\Bg$ and $X\in\Ic$. 

Also let $\Ec$ be a totally ordered set of self-adjoint idempotent 
elements in $\Bg$ such that $0,\1\in\Ec$, 
and denote by $\Part(\Ec)$ the set of all \emph{partitions} 
of $\Ec$, that is, the finite families $\Pc=\{p_i\}_{0\le i\le n}$ 
in $\Ec$ such that $0=p_0<p_1<\cdots<p_n=\1$. 
For such a partition $\Pc$ of $\Ec$ we define the 
\emph{diagonal truncation} operator 
$$\Dc_{\Pc}\colon\Ic\to\Ic,\quad 
\Dc_{\Pc}(X)=\sum_{i=1}^n (p_i-p_{i-1})X(p_i-p_{i-1}), 
$$
the \emph{strictly upper triangular truncation} operator 
$$\Uc_{\Pc}\colon\Ic\to\Ic,\quad 
\Uc_{\Pc}(X)=\sum_{i=1}^n p_{i-1}X(p_i-p_{i-1}), 
$$
and the 
\emph{strictly lower triangular truncation} operator 
$$\Lc_{\Pc}\colon\Ic\to\Ic,\quad 
\Lc_{\Pc}(X)=\sum_{i=1}^n (p_i-p_{i-1})Xp_{i-1}.  
$$
Now think of $\Part(\Ec)$ as a directed set with respect to 
the partial order defined by $\Pc\le\Qc$ 
if and only if $\Pc$ is a subfamily of $\Qc$. 
If $X\in\Ic$ and the net $\{\Uc_{\Pc}(X)\}_{\Pc\in\Part(\Ec)}$ 
is convergent in $\Ic$, 
then the corresponding limit is denoted by $\Uc(X)$ and is called 
the \emph{strictly upper triangular integral} of $X$. 
One similarly defines 
the \emph{strictly lower triangular integral} $\Lc(X)$ 
and the \emph{diagonal integral} $\Dc(X)$ 
whenever they exist. 
\qed
\end{definition}

\begin{proposition}\label{tri2}
Let $\Bg$ be a unital associative involutive Banach algebra, 
$\Ic$ a contractive $\Bg$-bimodule and $\Ec$ a totally ordered 
set of self-adjoint idempotents in $\Bg$ such that $0,1\in\Ec$. 
Assume that the following conditions are satisfied: 
\begin{itemize}
\item[{\rm(i)}] 
Either the Banach space underlying $\Ic$ is reflexive 
or the integrals $\Dc$ and $\Uc$ are convergent on dense subsets of $\Ic$. 
\item[{\rm(ii)}] 
Both families of operators 
$\{\Dc_{\Pc}\}_{\Pc\in\Part(\Ec)}$ and 
$\{\Uc_{\Pc}\}_{\Pc\in\Part(\Ec)}$ 
are uniformly bounded.
\end{itemize} 
Then the following assertions hold: 
\begin{itemize}
\item[{\rm(a)}] 
The strictly upper triangular integral, 
the strictly lower triangular integral, 
and the diagonal integral 
exist throughout $\Ic$ and the corresponding mappings are 
bounded linear idempotent operators 
$\Uc,\Lc,\Dc\colon\Ic\to\Ic$. 
\item[{\rm(b)}] 
There exists the direct sum decomposition 
$$\Ic=\Ran\Lc\dotplus\Ran\Dc\dotplus\Ran\Uc, $$
and the corresponding decomposition of an arbitrary element 
$X\in\Ic$ is $X=\Lc(X)+\Dc(X)+\Uc(X)$. 
In addition, 
$\Lc(X^*)=\Uc(X)^*$ and $\Dc(X^*)=\Dc(X)^*$ for all $X\in\Ic$.  
\end{itemize} 
\end{proposition}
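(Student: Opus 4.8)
The plan is to reduce everything to the convergence of the net $\{\Uc_{\Pc}(X)\}$, since once that is settled the diagonal and lower integrals follow by the algebraic identity $X = \Uc_{\Pc}(X) + \Dc_{\Pc}(X) + \Lc_{\Pc}(X)$, valid for every partition $\Pc$ (this is just the decomposition of $X$ according to the orthogonal idempotents $p_i - p_{i-1}$), together with the involution relations $\Lc_{\Pc}(X^*) = \Uc_{\Pc}(X)^*$ and $\Dc_{\Pc}(X^*) = \Dc_{\Pc}(X)^*$, which are immediate from $(b_1 X b_2)^* = b_2^* X^* b_1^*$ and the self-adjointness of the $p_i$. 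So the first step is to record these elementary pointwise-in-$\Pc$ identities.

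The second step is the convergence of $\{\Dc_{\Pc}(X)\}$ and $\{\Uc_{\Pc}(X)\}$ for every $X \in \Ic$, which is exactly the place where hypotheses (i) and (ii) enter and which I expect to be the main obstacle. By (ii) the nets of operators $\{\Dc_{\Pc}\}$ and $\{\Uc_{\Pc}\}$ are uniformly bounded, say by some constant $C$. If $\Ic$ is reflexive, I would argue as follows: the net $\{\Uc_{\Pc}(X)\}$ lies in a bounded set, hence in a weakly compact set, so it has weak cluster points; to see there is a unique one (so that the net converges weakly), I would exploit the directedness of $\Part(\Ec)$ together with the consistency relation $\Uc_{\Pc}\circ\Uc_{\Qc} = \Uc_{\Pc}$ for $\Pc \le \Qc$ — more precisely, for a fixed refinement one checks $\Uc_{\Pc}\Uc_{\Qc} = \Uc_{\Pc}$ and $\Uc_{\Qc}\Uc_{\Pc} = \Uc_{\Pc}$ when $\Pc \le \Qc$, so the net is ``eventually idempotent'' in a way that forces uniqueness of the cluster point; then one upgrades weak convergence to norm convergence by a further argument, or one invokes that the classical references (\cite{GK70}, \cite{Er78}) carry this out and the same proof applies verbatim in the present slightly more general bimodule setting. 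If instead the integrals converge on a dense subset $\Ic_0$, then the uniform bound $C$ from (ii) gives an equicontinuity estimate $\|\Uc_{\Pc}(X) - \Uc_{\Qc}(X)\| \le \|\Uc_{\Pc}(X_0) - \Uc_{\Qc}(X_0)\| + 2C\|X - X_0\|$ for $X_0 \in \Ic_0$, so a standard $3\varepsilon$-argument shows $\{\Uc_{\Pc}(X)\}$ is Cauchy, and completeness of $\Ic$ gives a limit; the same for $\Dc$, and then $\Lc(X) = X - \Dc(X) - \Uc(X)$ converges as well.

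Having established convergence throughout $\Ic$, the remaining steps are routine: linearity of $\Uc, \Lc, \Dc$ is inherited from linearity of each $\Uc_{\Pc}, \Lc_{\Pc}, \Dc_{\Pc}$ and continuity of vector-space operations; the bound $\|\Uc\| \le C$ (similarly for the others) passes to the limit; idempotence $\Uc^2 = \Uc$ follows from $\Uc_{\Pc}\Uc_{\Qc} = \Uc_{\Pc}$ for $\Pc \le \Qc$ by taking limits first in $\Qc$ and then in $\Pc$. For part (b), passing to the limit in $X = \Lc_{\Pc}(X) + \Dc_{\Pc}(X) + \Uc_{\Pc}(X)$ gives $X = \Lc(X) + \Dc(X) + \Uc(X)$, so $\Ic = \Ran\Lc + \Ran\Dc + \Ran\Uc$; directness of the sum follows by checking $\Uc\Dc = \Uc\Lc = \Dc\Lc = \cdots = 0$ (each composite is $0$ already at the level of partitions, e.g. $p_{i-1}(p_i - p_{i-1})X(p_i - p_{i-1})(p_i - p_{i-1}) \cdots$ collapses by orthogonality of the $p_i - p_{i-1}$), and then one shows an element in the intersection of any two summands is fixed by one idempotent and killed by it. Finally $\Lc(X^*) = \Uc(X)^*$ and $\Dc(X^*) = \Dc(X)^*$ follow by taking limits in the corresponding partition-level identities, using that $X \mapsto X^*$ is isometric hence continuous, and that the map $\Pc \mapsto \Pc$ is a bijection of $\Part(\Ec)$ compatible with the order (so limits over $\Pc$ and over the ``reversed'' families agree).
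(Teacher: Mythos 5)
Your proposal is correct and follows essentially the same route as the paper: both proofs reduce everything to the partition-level identities (sum to the identity, vanishing mutual products, compatibility with the involution, and the refinement relations $\Uc_{\Pc}\Uc_{\Qc}=\Uc_{\Qc}\Uc_{\Pc}=\Uc_{\Pc}$), handle the dense-subset case by the uniform bound from (ii), and defer the genuinely hard point --- norm convergence of the monotone net of idempotents in the reflexive case --- to the classical result (the paper cites Theorem~2.1 of Erdos, exactly the fallback you name), before passing to limits for (b).
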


\begin{proof} 
If the integrals $\Dc$ and $\Uc$ are convergent on dense subsets of $\Ic$, 
then they are convergent everywhere because of hypothesis~(ii). 
Now let us assume that the Banach space underlying $\Ic$ is reflexive. 
It is easy to check that for all $\Pc\in\Part(\Ec)$ all of the operators 
$\Lc_{\Pc}$, $\Dc_{\Pc}$, and $\Uc_{\Pc}$ are idempotent,  
their mutual products are equal to $0$, and their sum is the identity 
mapping on $\Ic$. 
In addition, the nets $\{\Uc_{\Pc}(X)\}_{\Pc\in\Part(\Ec)}$  and 
$\{\Lc_{\Pc}(X)\}_{\Pc\in\Part(\Ec)}$ 
are increasing, 
while $\{\Dc_{\Pc}(X)\}_{\Pc\in\Part(\Ec)}$ is decreasing, 
in the sense that if $\Pc,\Qc\in\Part(\Ec)$ and $\Pc\le\Qc$, 
then 
$\Uc_{\Pc}\Uc_{\Qc}=\Uc_{\Qc}\Uc_{\Pc}=\Uc_{\Pc}$, 
$\Lc_{\Pc}\Lc_{\Qc}=\Lc_{\Qc}\Lc_{\Pc}=\Lc_{\Pc}$, 
and 
$\Dc_{\Pc}\Dc_{\Qc}=\Dc_{\Qc}\Dc_{\Pc}=\Dc_{\Qc}$. 
Now hypotheses (i)~and~(ii) ensure that 
Theorem~2.1 of \cite{Er78} applies, 
whence assertion~(a) follows. 

For assertion~(b) note that 
$X=\Lc_{\Pc}(X)+\Dc_{\Pc}(X)+\Uc_{\Pc}(X)$ 
and then take the limit with respect to $\Pc\in\Part(\Ec)$, 
for each $X\in\Ic$. 
For all $\Pc,\Qc\in\Part(\Ec)$ we have 
$\Lc_{\Pc}\Dc_{\Pc}=0$, whence by taking the limit 
we get 
$\Lc\Dc=0$. 
Similarly, all of the mutual products of the operators 
$\Lc$, $\Dc$, and $\Uc$, are equal to $0$, 
whence the asserted direct sum decomposition follows. 
To complete the proof note that for all $X\in\Ic$ and $\Pc\in\Part(\Ec)$ 
we have $\Lc_{\Pc}(X^*)=\Uc_{\Pc}(X)^*$ 
and $\Dc_{\Pc}(X^*)=\Dc_{\Pc}(X)^*$, 
and then again take the limit with respect to the partition $\Pc$ 
of $\Ec$. 
\end{proof}

\begin{example}\label{tri3}
\normalfont
(See Theorem~4.1 in \cite{Ar78}.)
Let $\Phi$ be a mononormalizing symmetric norming function whose 
Boyd indices are nontrivial 
and denote $\Ic=\Sg_\Phi\subseteq\Bc(\Hc)$. 
Denote by $\Ec$ the set of orthogonal projections associated 
with a maximal, totally ordered set of closed subspaces of $\Hc$. 
Then the corresponding strictly upper triangular, 
the strictly lower triangular, and the diagonal integral 
exist throughout $\Ic$ and they define bounded linear idempotent operators 
$\Uc,\Lc,\Dc\colon\Ic\to\Ic$ whose mutual products are equal to $0$ 
and $\Uc+\Lc+\Dc=\1$. 

Indeed, condition~(i) in Proposition~\ref{tri2} 
is satisfied since the integrals of finite-rank operators are 
clearly convergent. 
Also, it follows by Theorem~3.2 in \cite{Er78} that 
the families of operators 
$\{\Dc_{\Pc}\}_{\Pc\in\Part(\Ec)}$ and 
$\{\Uc_{\Pc}\}_{\Pc\in\Part(\Ec)}$ 
are uniformly bounded on each Schatten ideal $\Sg_p$ 
if $1<p<\infty$. 
Now Theorem~\ref{interpolation} shows that both these families 
are uniformly bounded on $\Ic=\Sg_\Phi$ as well, 
since the Boyd indices of $\Phi$ are non-trivial. 
Thus condition~(ii) in Proposition~\ref{tri2} 
is satisfied as well, and it then follows that the integrals 
$\Uc$, $\Dc$, and $\Lc$ are convergent throughout~$\Ic$. 
\qed
\end{example}

\noindent\textbf{Triangular factorizations.}
Our next purpose is to survey some of the methods that 
allow one to find operator theoretic versions 
of the well-known LU~factorization from linear algebra, 
that is, the fact that every invertible matrix factorizes as  
the product of a unitary matrix and a triangular one. 

\begin{definition}\label{nest}
\normalfont
Let $\Ec$ be the set of orthogonal projections associated 
with a maximal, totally ordered set of closed linear subspaces of $\Hc$. 
Then the \emph{nest algebra} of $\Ec$ is 
$\Alg\Ec=\{b\in\Bc(\Hc)\mid(\forall e\in\Ec)\quad be=ebe\}$, 
that is, the set of all operators which leave invariant 
each subspace in the family $\{e(\Hc)\}_{e\in\Ec}$.
\qed
\end{definition}

\begin{definition}\label{pairs}
\normalfont
Let $\Sg_{\rm I}$ and $\Sg_{\rm II}$ be norm ideals. 
We shall say that $(\Sg_{\rm I},\Sg_{\rm II})$ is 
a pair of \emph{associated} norm ideals if 
the following conditions are satisfied: 
\begin{itemize}
\item[{\rm(i)}] $\Sg_{\rm I}$ is separable; 
\item[{\rm(ii)}] $\Sg_{\rm I}\subseteq\Sg_{\rm II}$; 
\item[{\rm(iii)}] for every maximal, totally ordered set of 
closed linear subspaces of $\Hc$ and every $X\in\Sg_{\rm I}$, 
the corresponding strictly upper triangular integral is convergent 
in the contractive $\Bc(\Hc)$-bimodule $\Ic=\Bc(\Hc)$ 
and $\Uc(X)\in\Sg_{\rm II}$. 
\end{itemize}
(See \cite{GK70} and \cite{Er72} for the original version of this definition.)
\qed
\end{definition}

\begin{example}\label{self-assoc}
\normalfont
Let $\Phi$ be a mononormlizing symmetric norming function 
whose Boyd indices are nontrivial, 
and denote $\Sg_{\rm I}=\Sg_{\rm II}=\Sg_\Phi$. 
Then Example~\ref{tri3} shows that $(\Sg_{\rm I},\Sg_{\rm II})$ 
is a pair of associated norm ideals. 

As a special case, it follows by Remark~\ref{compute_boyd} 
that each pair $(\Sg_p,\Sg_p)$ consisting of the $p$-th Schatten ideal and itself 
is a pair of associated norm ideals if $1<p<\infty$. 
\qed
\end{example}

\begin{theorem}\label{erdos}
Let $(\Sg_{\rm I},\Sg_{\rm II})$ be a pair of associated norm ideals 
and consider the triangular and diagonal integrals on 
the contractive $\Bc(\Hc)$-bimodule $\Ic=\Bc(\Hc)$ 
with respect to the set $\Ec$ of orthogonal projections 
associated with some maximal, totally ordered set of 
closed linear subspaces in $\Hc$. 
Also assume that $0\le a\in\GL(\Hc)$. 
Then the following conditions are equivalent: 
\begin{itemize}
\item[{\rm(i)}] $\Uc(a^{-1})$ exists and $\Uc(a^{-1})\in\Sg_{\rm I}$. 
\item[{\rm(ii)}] $\Dc(a^{-1})$ exists and $a^{-1}-\Dc(a^{-1})\in\Sg_{\rm I}$. 
\end{itemize}
If one of these conditions holds true, 
then there exist uniquely determined 
$r\in\Sg_{\rm II}\cap\Alg\Ec$ 
and $d=d^*\in\Alg\Ec$ such that $a=(\1+r)d(\1+r^*)$ and 
the spectrum of $r$ is equal to $\{0\}$. 
\end{theorem}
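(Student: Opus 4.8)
The plan is to reduce the statement to a triangular ``$LDL^{*}$-type'' factorization of the positive invertible operator $b:=a^{-1}$ along the nest $\Ec$, carried out inside the framework of Proposition~\ref{tri2} and Example~\ref{tri3}, and then to obtain the factorization of $a$ by inversion. I would first settle the equivalence of (i) and (ii). Since $b=b^{*}$, for each $\Pc\in\Part(\Ec)$ we have $\Lc_{\Pc}(b)=\Uc_{\Pc}(b)^{*}$, $\Dc_{\Pc}(b)=\Dc_{\Pc}(b)^{*}$ and $b=\Lc_{\Pc}(b)+\Dc_{\Pc}(b)+\Uc_{\Pc}(b)$; moreover a $\Qc$-block-diagonal element is also $\Pc$-block-diagonal whenever $\Qc$ refines $\Pc$, so $\Uc_{\Pc}\circ\Dc_{\Qc}=0$ for $\Qc\ge\Pc$ and hence $\Uc_{\Pc}(\Dc(b))=0$ once $\Dc(b)$ exists. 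Granting (i), passing to adjoints shows that $\Lc(b)=\Uc(b)^{*}$ exists and lies in $\Sg_{\rm I}$ (a norm ideal is $*$-stable), so $\Dc(b)=b-\Uc(b)-\Lc(b)$ exists and $b-\Dc(b)=\Uc(b)+\Lc(b)\in\Sg_{\rm I}$. Granting (ii), put $Y:=b-\Dc(b)\in\Sg_{\rm I}$; the defining property of a pair of associated norm ideals (Definition~\ref{pairs}) makes $\Uc(Y)$ convergent, and since $\Uc_{\Pc}(b)=\Uc_{\Pc}(Y)+\Uc_{\Pc}(\Dc(b))=\Uc_{\Pc}(Y)$ the integral $\Uc(b)$ exists and equals $\Uc(Y)$. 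The one delicate point here is the ideal bookkeeping --- that $\Uc(b)$ lands back in $\Sg_{\rm I}$ and not merely in $\Sg_{\rm II}$ --- which I would extract from $Y=Y^{*}$, $\Dc(Y)=0$, the relation $Y=\Uc(Y)+\Uc(Y)^{*}$, and the uniform boundedness of the truncations on the ideal at hand (Example~\ref{tri3}, via Theorem~\ref{interpolation}).

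Next I would reduce the factorization of $a$ to the claim that $b=w^{*}w$ for some $w\in\Alg\Ec$ with $\Dc(w)$ positive invertible and $w-\Dc(w)\in\Sg_{\rm II}$. Indeed, by (i) we have $b=d_{0}+k$, where $d_{0}:=\Dc(b)$ is a positive invertible element of the diagonal algebra $\Alg\Ec\cap(\Alg\Ec)^{*}$ (from $0<\varepsilon\1\le b\le\Vert b\Vert\1$ we get $\varepsilon\1\le\Dc_{\Pc}(b)\le\Vert b\Vert\1$ for all $\Pc$) and $k:=\Uc(b)+\Lc(b)=k^{*}\in\Sg_{\rm I}$; and granting such a $w$, one sets $\1+r:=w^{-1}\Dc(w)\in\Alg\Ec$ and $d:=\Dc(w)^{-2}$, so that $a=b^{-1}=w^{-1}(w^{*})^{-1}=(\1+r)d(\1+r^{*})$ with $d=d^{*}\in\Alg\Ec\cap(\Alg\Ec)^{*}$ and $r=w^{-1}\Dc(w)-\1\in\Sg_{\rm II}\cap\Alg\Ec$ (using that $\Sg_{\rm II}$ is an ideal and that nest algebras are inverse-closed). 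Uniqueness is then formal: if $a=(\1+r_{1})d_{1}(\1+r_{1}^{*})=(\1+r_{2})d_{2}(\1+r_{2}^{*})$ with both triples normalized as in the statement, then $g:=(\1+r_{2})^{-1}(\1+r_{1})\in\Alg\Ec$ satisfies $g\,d_{1}\,g^{*}=d_{2}$, hence $(g^{*})^{-1}=d_{2}^{-1}g\,d_{1}$ lies in $\Alg\Ec\cap(\Alg\Ec)^{*}$, so $g$ is diagonal; since $\Dc$ is multiplicative on $\Alg\Ec$ and $\Dc(r_{i})=0$ (because $r_{i}$ is a compact quasinilpotent element of $\Alg\Ec$, by Ringrose's radical criterion), this forces $\Dc(g)=\1$, whence $g=\1$ and $r_{1}=r_{2}$, $d_{1}=d_{2}$.

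It remains to construct $w$. I would first produce an ``abstract'' factorization $b=w_{0}^{*}w_{0}$ with $w_{0},w_{0}^{-1}\in\Alg\Ec$ --- such a factorization exists for every positive invertible operator and a complete nest, e.g. by assembling the ordinary $LDL^{*}$-factorizations on the finite blocks of a cofinal family of partitions and passing to a limit using completeness of $\Ec$ --- and then replace $w_{0}$ by $v^{*}w_{0}$, where $\Dc(w_{0})=v\,|\Dc(w_{0})|$ is the polar decomposition ($\Dc(w_{0})$ being invertible since $\Dc$ is multiplicative on $\Alg\Ec$ and $w_{0},w_{0}^{-1}\in\Alg\Ec$), so that the new $w$ satisfies $\Dc(w)=|\Dc(w_{0})|>0$ and $w-\Dc(w)=\Uc(w)$ is strictly upper triangular. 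The substantial step is then to show $\Uc(w)\in\Sg_{\rm II}$: expanding $b=(\Dc(w)+\Uc(w)^{*})(\Dc(w)+\Uc(w))$ and applying $\Uc$ (which kills diagonal and strictly lower elements and fixes strictly upper ones) gives the implicit identity $\Uc(w)=\Dc(w)^{-1}\bigl(\Uc(b)-\Uc(\Uc(w)^{*}\Uc(w))\bigr)$, and since $\Uc(b)\in\Sg_{\rm I}$ one bootstraps $\Uc(w)\in\Sg_{\rm II}$ using the convergence and uniform boundedness of the triangular integral on $\Sg_{\rm II}$ (Proposition~\ref{tri2}, Example~\ref{tri3}) together with a multiplicativity property of the associated pair (of the form $\Sg_{\rm II}\cdot\Sg_{\rm II}\subseteq\Sg_{\rm I}$ in the examples of Example~\ref{self-assoc}). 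Finally $\sigma(r)=\{0\}$ follows from the structure of a triangular factorization along a chain --- $r\in\Alg\Ec$, $\Dc(r)=0$ (since $\Dc(\1+r)=\Dc(w^{-1})\Dc(w)=\1$), and $r$ compact, so $r$ is quasinilpotent by Ringrose's criterion for nest algebras.

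The obstacle I expect to dominate the proof is precisely this last construction: upgrading the purely nest-algebraic factorization $b=w_{0}^{*}w_{0}$ to one whose off-diagonal part is controlled in $\Sg_{\rm II}$, i.e. recognizing that $\Uc(a^{-1})\in\Sg_{\rm I}$ is exactly the condition under which the triangular factorization ``converges in $\Sg_{\rm II}$''. This is the genuinely analytic core --- the content of the Gohberg--Krein and Erdos factorization theory (see \cite{GK70} and \cite{Er72}), to which I would appeal --- whereas everything else is either a self-adjointness computation or a routine manipulation inside the nest algebra.
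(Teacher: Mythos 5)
The paper does not actually prove this theorem: its entire ``proof'' is the citation to Theorem~4.2 and Lemma~2.5(i) of \cite{Er72}, and your proposal, which in the end also defers the analytic core to \cite{GK70} and \cite{Er72}, is therefore in spirit the same proof with a reconstruction of Erdos's intermediate steps grafted on. That reconstruction is broadly faithful to the source: the reduction of $a=(\1+r)d(\1+r^{*})$ to an $LDL^{*}$-type factorization $a^{-1}=w^{*}w$ along the nest, the multiplicativity of $\Dc$ on $\Alg\Ec$, the uniqueness argument, and the quasinilpotency of $r$ via Ringrose's criterion are all consistent with what is in \cite{Er72}.

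However, two of the bridging steps you claim to handle yourself would not go through from the paper's Definition~\ref{pairs} alone. First, in (ii)$\Rightarrow$(i) you must upgrade $\Uc(Y)\in\Sg_{\rm II}$ to $\Uc(Y)\in\Sg_{\rm I}$ for $Y=a^{-1}-\Dc(a^{-1})\in\Sg_{\rm I}$, and you propose to do this via ``uniform boundedness of the truncations on the ideal at hand''. But for a genuine pair with $\Sg_{\rm I}\subsetneq\Sg_{\rm II}$ (the motivating case being $\Sg_{\rm I}=\Sg_1$) the triangular truncations are precisely \emph{not} uniformly bounded on $\Sg_{\rm I}$ --- that failure is the reason the two-ideal formulation exists --- and restricting to self-adjoint zero-diagonal elements does not help, since the classical unbounded example with matrix entries $1/(j-k)$ is already skew-adjoint with zero diagonal. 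Whatever makes this implication work in Erdos's Lemma~2.5(i) is not recovered by your argument. Second, your bootstrap for $\Uc(w)\in\Sg_{\rm II}$ invokes a product property $\Sg_{\rm II}\cdot\Sg_{\rm II}\subseteq\Sg_{\rm I}$ that is not part of Definition~\ref{pairs} and is not available for a general associated pair. (A smaller slip: nest algebras are not inverse-closed --- the backward bilateral shift on $\ell^2({\mathbb Z})$ is a unitary in the nest algebra of the standard nest whose inverse is not --- though the particular inverses you use, of $\1+r$ with $r$ quasinilpotent and of invertible elements of the diagonal von Neumann algebra, are legitimate.) Since the paper itself offers nothing beyond the citation, these gaps do not put you at odds with its route; they only mean that the parts you tried to fill in independently still rest on \cite{Er72}.
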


\begin{proof}
See Theorem~4.2 and Lemma~2.5(i) in \cite{Er72}. 
\end{proof}

Here are two corollaries concerning the group $\GL_\Phi(\Hc)$ 
of Example~\ref{classical} below. 

\begin{corollary}\label{fact1}
Let $\Phi$ be a mononormalizing symmetric norming function 
whose Boyd indices are nontrivial, 
and denote by $\Ec$ the set of orthogonal projections associated 
with a maximal, totally ordered set of closed linear subspaces of $\Hc$. 
If $0\le a\in\GL_\Phi(\Hc)$, then there exist uniquely determined 
operators $d,r\in\Bc(\Hc)$ such that 
$0\le d\in\GL_\Phi(\Hc)\cap\Alg\Ec$, 
$r\in\Sg_\Phi\cap\Alg\Ec$, 
the spectrum of $r$ is equal to $\{0\}$, 
and $a=(\1+r^*)d(\1+r)$.
\end{corollary}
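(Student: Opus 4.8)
The plan is to deduce this from Theorem~\ref{erdos} applied to the \emph{complementary} (reversed) nest. Write $\Ec^{\perp}=\{\1-e\mid e\in\Ec\}$; this is again the set of orthogonal projections associated with a maximal, totally ordered family of closed subspaces of $\Hc$ (the orthogonal complements, in the reversed order), and $0,\1\in\Ec^{\perp}$. By Example~\ref{self-assoc}, $(\Sg_\Phi,\Sg_\Phi)$ is a pair of associated norm ideals for $\Ec^{\perp}$ as well. First I would record the elementary bookkeeping linking $\Ec^{\perp}$ to $\Ec$: reindexing a partition $\Pc=\{p_i\}_{0\le i\le n}$ of $\Ec$ by $\tilde\Pc=\{\1-p_{n-i}\}_{0\le i\le n}$ of $\Ec^{\perp}$ and using the identity $\Lc_{\Pc}+\Dc_{\Pc}+\Uc_{\Pc}=\id$ on $\Bc(\Hc)$, one checks that the strictly upper triangular truncations $\Uc_{\tilde\Pc}$ for $\Ec^{\perp}$ coincide with the strictly lower triangular truncations $\Lc_{\Pc}$ for $\Ec$; consequently the strictly upper triangular integral for $\Ec^{\perp}$ is the strictly lower triangular integral $\Lc$ for $\Ec$, and $\Alg(\Ec^{\perp})=(\Alg\Ec)^{*}$.

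Next I would verify hypothesis~(i) of Theorem~\ref{erdos} for $a$ relative to $\Ec^{\perp}$. Since $0\le a\in\GL_\Phi(\Hc)$ we may write $a=\1+x$ with $x=x^{*}\in\Sg_\Phi$, hence $a^{-1}=\1+y$ with $y=y^{*}\in\Sg_\Phi$ because $\Sg_\Phi$ is an ideal. As $\Uc_{\Pc}(\1)=\Lc_{\Pc}(\1)=0$ for every partition $\Pc$, the strictly upper triangular integral of $a^{-1}$ for $\Ec^{\perp}$ equals $\Lc(a^{-1})=\Lc(y)$, and Example~\ref{tri3} (applicable because $\Phi$ is mononormalizing with nontrivial Boyd indices) shows that $\Lc(y)$ exists and belongs to $\Sg_\Phi$. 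Thus Theorem~\ref{erdos}, applied with $(\Sg_{\rm I},\Sg_{\rm II})=(\Sg_\Phi,\Sg_\Phi)$ and the nest $\Ec^{\perp}$, yields uniquely determined $\tilde r\in\Sg_\Phi\cap\Alg(\Ec^{\perp})$ and $d=d^{*}\in\Alg(\Ec^{\perp})$ with $a=(\1+\tilde r)\,d\,(\1+\tilde r^{*})$ and spectrum of $\tilde r$ equal to $\{0\}$.

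It then remains to translate this back to $\Ec$ and to upgrade $d$. Setting $r:=\tilde r^{*}$ gives $r\in\Sg_\Phi\cap\Alg\Ec$, spectrum of $r$ equal to $\{0\}$, and $a=(\1+r^{*})\,d\,(\1+r)$ with $d=d^{*}\in(\Alg(\Ec^{\perp}))^{*}=\Alg\Ec$. Because the spectrum of $r$ is $\{0\}$ the operator $\1+r$ is invertible, so $d=\bigl(a^{1/2}(\1+r)^{-1}\bigr)^{*}\bigl(a^{1/2}(\1+r)^{-1}\bigr)\ge 0$; and writing $(\1+r)^{-1}=\1+s$ with $s\in\Sg_\Phi$ and expanding $d=(\1+s^{*})(\1+x)(\1+s)$ shows $d-\1\in\Sg_\Phi$, whence $0\le d\in\GL_\Phi(\Hc)\cap\Alg\Ec$. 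For uniqueness I would run the correspondence backwards: a factorization $a=(\1+r^{*})d(\1+r)$ as in the statement gives, with $\tilde r:=r^{*}$, a factorization $a=(\1+\tilde r)d(\1+\tilde r^{*})$ of the type covered by Theorem~\ref{erdos} for $\Ec^{\perp}$ (note that $d\ge 0$ in $\Alg\Ec$ forces $d=d^{*}\in\Alg(\Ec^{\perp})$), so $\tilde r$ and $d$, and therefore $r$ and $d$, are unique.

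I expect the only genuinely delicate point to be the bookkeeping of the first paragraph --- correctly matching the truncations, integrals and nest algebra of $\Ec^{\perp}$ with the ``lower-triangular'' objects attached to $\Ec$, and checking that the directed-set structures correspond under $\Pc\mapsto\tilde\Pc$ so that convergence of the nets is preserved --- after which everything else reduces to the cited results (Example~\ref{tri3} and Theorem~\ref{erdos}) and routine ideal arithmetic.
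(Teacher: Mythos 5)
Your proof is correct and follows essentially the same route as the paper, whose entire argument is the citation ``See Theorem~\ref{erdos} and Example~\ref{self-assoc}.'' The additional care you take --- passing to the complementary nest $\Ec^{\perp}$ to reconcile the order of the factors ($a=(\1+r)d(\1+r^*)$ in Theorem~\ref{erdos} versus $a=(\1+r^*)d(\1+r)$ with $r\in\Alg\Ec$ in the corollary), verifying hypothesis~(i) of Theorem~\ref{erdos} by writing $a^{-1}=\1+y$ with $y\in\Sg_\Phi$, and upgrading $d$ from a self-adjoint element of the nest algebra to a positive element of $\GL_\Phi(\Hc)\cap\Alg\Ec$ --- correctly fills in exactly the details that the paper's one-line proof leaves to the reader.
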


\begin{proof}
See Theorem~\ref{erdos} and Example~\ref{self-assoc}. 
\end{proof}

\begin{corollary}\label{fact2}
Assume the setting of \emph{Corollary~\ref{fact1}}. 
Then for every $g\in\GL_\Phi(\Hc)$ there exist the operators 
$b,u\in\GL_\Phi(\Hc)$ such that $b\in\Alg\Ec$, $u^*u=\1$, 
and $g=ub$. 
\end{corollary}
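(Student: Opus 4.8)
The plan is to deduce Corollary~\ref{fact2} from the polar decomposition together with the positive factorization in Corollary~\ref{fact1}. First I would take $g\in\GL_\Phi(\Hc)$ and form $a:=g^*g$, which is a positive invertible operator; since $\GL_\Phi(\Hc)$ is a group stable under taking adjoints (it is the group of invertible elements of the form $\1+\Sg_\Phi$, so products and adjoints stay in it), we have $a=g^*g\in\GL_\Phi(\Hc)$ and $0\le a$. Applying Corollary~\ref{fact1} to $a$ yields uniquely determined $d,r\in\Bc(\Hc)$ with $0\le d\in\GL_\Phi(\Hc)\cap\Alg\Ec$, $r\in\Sg_\Phi\cap\Alg\Ec$, $\sigma(r)=\{0\}$, and $a=(\1+r^*)d(\1+r)$.

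Next I would set $b:=d^{1/2}(\1+r)$. I must check $b\in\GL_\Phi(\Hc)\cap\Alg\Ec$: the nest algebra $\Alg\Ec$ is a subalgebra of $\Bc(\Hc)$ closed under the continuous functional calculus for positive elements (each invariant subspace is reducing for $d$ in the sense that $d^{1/2}$ again leaves it invariant), so $d^{1/2}\in\Alg\Ec$, and $\1+r\in\Alg\Ec$, hence $b\in\Alg\Ec$; moreover $d^{1/2}$ is invertible with $d^{1/2}-\1\in\Sg_\Phi$ (apply holomorphic functional calculus to $d=\1+(d-\1)$ with $d-\1\in\Sg_\Phi$, or use that $\Sg_\Phi$ is a closed ideal and $d^{1/2}=\1+$ a convergent series in $d-\1$), and $\1+r$ is invertible since $\sigma(r)=\{0\}$ with $(\1+r)-\1=r\in\Sg_\Phi$; therefore $b\in\GL_\Phi(\Hc)$. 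Now $b^*b=(\1+r^*)d(\1+r)=a=g^*g$, so with $u:=gb^{-1}$ we compute $u^*u=(b^{-1})^*g^*g b^{-1}=(b^{-1})^*b^*b\,b^{-1}=\1$, i.e.\ $u$ is an isometry. Since $g=ub$ with $g$ and $b$ both invertible, $u$ is invertible as well, hence $u\in\GL_\Phi(\Hc)$ (it equals $gb^{-1}$, a product of elements of the group), and the desired factorization $g=ub$ is established.

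The main obstacle I anticipate is the bookkeeping needed to justify that taking the positive square root $d\mapsto d^{1/2}$ both stays inside the norm ideal perturbation class (so that $d^{1/2}-\1\in\Sg_\Phi$, ensuring $d^{1/2}\in\GL_\Phi(\Hc)$) and preserves membership in $\Alg\Ec$. Both points are standard functional-calculus facts---$\Sg_\Phi$ is a closed two-sided ideal of $\Bc(\Hc)$ by Definition~\ref{ideals1}, so $d^{1/2}=f(d-\1)$ for the analytic function $f(z)=\sqrt{1+z}$ (convergent on a neighbourhood of $\sigma(d-\1)$, which is a compact subset of $(-1,\infty)$) gives $d^{1/2}-\1\in\Sg_\Phi$; and $\Alg\Ec$ is a weakly closed unital subalgebra of $\Bc(\Hc)$, hence closed under the holomorphic functional calculus---but they are exactly the kind of thing that must be stated carefully rather than waved through. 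Uniqueness of the pair $(b,u)$, if one wants it, follows from uniqueness in Corollary~\ref{fact1}: given $g=ub=u'b'$ with $u,u'$ isometries and $b,b'\in\GL_\Phi(\Hc)\cap\Alg\Ec$ positive-times-$(\1+r)$, one gets $b^*b=g^*g=(b')^*b'$, and then the uniqueness of the factorization $g^*g=(\1+r^*)d(\1+r)$ forces $b=b'$ and hence $u=u'$; but the statement as written only asserts existence, so this remark can be omitted.
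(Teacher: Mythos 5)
Your argument is correct and follows exactly the route the paper takes: its proof of Corollary~\ref{fact2} simply says to apply Corollary~\ref{fact1} to $a=g^*g$ and defers the remaining bookkeeping (forming $b=d^{1/2}(\1+r)$, checking $b\in\GL_\Phi(\Hc)\cap\Alg\Ec$ and $u=gb^{-1}$ unitary) to Corollary~A.2 of \cite{Be07}. You have supplied precisely those details, including the functional-calculus facts that $d^{1/2}-\1\in\Sg_\Phi^{(0)}$ and that $d^{1/2}$ stays in the nest algebra, so nothing further is needed.
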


\begin{proof} 
Just apply Corollary~\ref{fact1} for $a=g^*g$. 
See for instance the proof of Corollary~A.2 in \cite{Be07} for more details. 
\end{proof}

\section{Invariant means on groups}

\noindent\textbf{Amenable groups.}
We shall briefly discuss the invariant means on topological groups. 
These can be thought of as weak versions of Haar measures  
although they have two main drawbacks: 
they may not be faithful, in the sense that the mean of a non-zero function 
with nonnegative values can be equal to zero; 
and not every Lie group admits an invariant mean. 
On the other hand, we shall see that many Banach-Lie groups 
do have invariant means; see for instance Remark~\ref{am}. 
Classical references for amenability 
are \cite{Pa88} and \cite{Ey72}. 
See \cite{Pe06}, \cite{Ga06}, and \cite{BP05} for some recent developments.

\begin{definition}\label{me1}
\normalfont
Let $G$ be a topological group. 
Consider the commutative unital $C^*$-algebra 
$\ell^\infty(G)=\{\psi\colon G\to{\mathbb C}\mid\Vert
 \psi\Vert_\infty:=\sup_G\vert\psi(\cdot)\vert<\infty\} $
and its automorphisms
$L_x,R_x\colon\ell^\infty(G)\to\ell^\infty(G)$, 
$(L_x\psi)(y)=\psi(xy)$ and $(R_x\psi)(y)=\psi(yx)$ 
whenever $y\in G$ and $\psi\in\ell^\infty(G)$,  
defined for arbitrary $x\in G$. 
The space of \emph{right uniformly continuous} bounded functions 
on $G$ is 
$$\RUCb(G)=\{\psi\in\ell^\infty(G)\mid \text{the map }
G\to\ell^\infty(G),\, x\mapsto L_x\psi,\text{ is continuous}\}.  
$$ 
Similarly, the space of 
\emph{left uniformly continuous} bounded functions 
on $G$ is the set $\LUCb(G)$ of all functions 
$\psi\in\ell^\infty(G)$ such that 
the mapping $G\to\ell^\infty(G)$, $x\mapsto R_x\psi$, is continuous. 
And the space of 
\emph{uniformly continuous} bounded functions 
on $G$ is $\UCb(G):=\RUCb(G)\cap\LUCb(G)$. 

We say that the topological group $G$ is \emph{amenable}
if there exists a linear functional 
$\mu\colon\RUCb(G)\to{\mathbb C}$ such that $\mu(\1)=1$, 
$0\le\mu(\psi)$ if $0\le\psi\in\RUCb(G)$, 
and $\mu(L_x\psi)=\mu(\psi)$ 
for all $\psi\in\RUCb(G)$ and $x\in G$. 
In this case we say that the linear functional~$\mu$ 
is a \emph{left invariant mean} on $G$.  
\qed
\end{definition}

\begin{remark}\label{me2}
\normalfont
If the topological group $G$ is amenable, 
then every left invariant mean~$\mu$ 
is continuous on $\RUCb(G)$ and $\Vert\mu\Vert=1$. 
On the other hand, 
the space $\RUCb(G)$ is a unital $C^*$-subalgebra of 
$\ell^\infty(G)$ which consists only of continuous functions, 
and it is invariant under the automorphisms $L_x$ 
for all $x\in G$. 
Thus the left invariant means on $G$ are precisely 
the states of the commutative unital $C^*$-algebra $\RUCb(G)$ 
which are invariant under the automorphism group defined 
by the mappings $L_x$ for arbitrary $x\in G$. 
\qed
\end{remark}

\begin{example}\label{me3}
\normalfont
\emph{If the topological group $G$ is either compact or abelian, 
then it is amenable.} 
In fact, let $\Cc(G)$ denote the space of all continuous functions 
on $G$. If $G$ is compact then the probability Haar measure 
defines a linear functional $\mu\colon\Cc(G)\to{\mathbb C}$. 
Compactness of $G$ implies that $\RUCb(G)=\Cc(G)$, 
and then the basic properties of the Haar measure show that 
$\mu$ is a left invariant mean on $G$. 
On the other hand, if $G$ is abelian, denote by $G_d$ 
the group $G$ endowed with the discrete topology. 
Then $\RUCb(G_d)=\ell^\infty(G)$ and it 
follows by (0.15) in \cite{Pa88} that there exists 
a left invariant mean $\mu\colon\ell^\infty(G_d)\to{\mathbb C}$ 
on the discrete group $G_d$. 
Now the restriction of $\mu$ to $\RUCb(G)$ defines a left invariant mean 
on~$G$. 
\qed
\end{example}

\begin{example}\label{me4}
\normalfont
\emph{Assume that $G$ is a topological group such that 
there exists a directed system of amenable topological subgroups 
$\{G_\alpha\}_{\alpha\in A}$ whose union is dense in $G$. 
Then $G$ is amenable.}
To see this, we shall say that a linear functional 
$\mu\colon\RUCb(G)\to{\mathbb C}$ is a \emph{mean} on $G$ if 
$\mu(\1)=1$ and  
$0\le\mu(\psi)$ whenever $0\le\psi\in\RUCb(G)$. 
In this case $\mu$ is continuous and $\Vert\mu\Vert=1$. 
Now for every $\alpha\in A$ denote 
$$\Lambda_\alpha=\{\mu\mid \mu\text{ is a mean on }G
\text{ and }\mu\circ L_x|_{\RUCb(G)}=\mu\text{ if }
x\in G_\alpha\}, 
$$
which is a $w^*$-compact subset of the unit ball in 
the topological dual space $(\RUCb(G))^*$. 
In addition, $\Lambda_\alpha\ne\emptyset$. 
In fact, any left invariant mean $\mu_\alpha$ on $G_\alpha$ 
gives rise to an element $\widetilde{\mu}_\alpha\in\Lambda_\alpha$ 
defined by 
$\widetilde{\mu}_\alpha(\psi)=\mu_\alpha(\psi|_{G_\alpha})$ 
for all $\psi\in\RUCb(G)$. 
On the other hand, if $\alpha,\beta\in A$ and $G_\alpha\subseteq G_\beta$, 
then $\Lambda_\alpha\supseteq\Lambda_\beta$. 
Thus $\{\Lambda_\alpha\}_{\alpha\in A}$ is a family of 
$w^*$-compact subsets of the unit ball in $(\RUCb(G))^*$ 
with the property that the intersection of each finite subfamily 
is nonempty. 
Therefore $\bigcap\limits_{\alpha\in A}\Lambda_\alpha\ne\emptyset$, 
and  any element $\mu$ in this nonempty intersection  
is a left invariant mean on $G$. 
In fact, we already know that $\mu$ is a mean on $G$. 
To check that it is left invariant, 
let $x\in G$ arbitrary. 
Since the union of the family $\{G_\alpha\}_{\alpha\in A}$ 
is dense in $G$ there exists a net $\{x_i\}_{i\in I}$ 
in that union such that $\lim\limits_{i\in I}x_i=x$. 
Then for every $\psi\in\RUCb(G)$ we have 
$\lim\limits_{i\in I}L_{x_i}\psi=L_x\psi$ in $\ell^\infty(G)$, 
hence $\mu(L_x\psi)=\lim\limits_{i\in I}\mu(L_{x_i}\psi)
=\lim\limits_{i\in I}\mu(\psi)=\mu(\psi)$, 
where the second equality follows since 
$\mu\in\bigcap\limits_{\alpha\in A}\Lambda_\alpha$  and 
$x_i\in\bigcup\limits_{\alpha\in A}G_\alpha$ 
for all $i\in I$. 
\qed
\end{example}

\begin{example}\label{me5}
\normalfont
\emph{Let $G$ be a finite-dimensional Lie group 
with finitely many connected components. 
Denote by $R$ the radical of $G$ 
(i.e., the connected subgroup of $G$ corresponding to the largest 
solvable ideal of the Lie algebra of $G$) 
and by $G_d$ the group $G$ endowed with the discrete topology. 
\begin{itemize}
\item[{\rm(a)}] The group $G$ is amenable if and only if 
                it has any of these properties: 
 \begin{itemize}
 \item[{\rm(i)}] the group $G/R$ is compact; 
 \item[{\rm(ii)}] there exists no closed subgroup of $G$ isomorphic to 
                  the free group ${\mathbb F}_2$ with two generators; 
 \end{itemize}
\item[{\rm(b)}] The discrete group $G_d$ is amenable if and only if 
                any of the following conditions is satisfied: 
 \begin{itemize}
 \item[{\rm(j)}] the group $G/R$ is finite 
                   (i.e., $G$ is a solvable Lie group); 
 \item[{\rm(jj)}] there exists no subgroup of $G_d$ isomorphic 
                   to ${\mathbb F}_2$. 
 \end{itemize}
\end{itemize}
}
\noindent We refer to Theorems (3.8) and (3.9) in \cite{Pa88} 
for proofs of these facts.
\qed
\end{example}

\begin{remark}\label{me6}
\normalfont
Assume that $\phi\colon\widetilde{G}\to G$ is a surjective homomorphism 
of topological groups such that $\Ker\phi$ is an abelian group. 
Then the group $\widetilde{G}$ is amenable if and only if $G$ is so. 
In fact, it follows by Example~\ref{me3} that the topological subgroup 
$\Ker\phi$ of $\widetilde{G}$ is amenable, 
and then the assertion follows for instance by remark $2^\circ)$ in \S 3 
of Expos\'e n$^o$~1 of \cite{Ey72}. 
\qed
\end{remark}

\subsubsection{Mimicking the group algebras of compact groups}

\begin{definition}\label{me7}
\normalfont 
Let $G$ be a topological group and consider the duality pairing 
$\langle\cdot,\cdot\rangle\colon(\RUCb(G))^*\times\RUCb(G)\to{\mathbb C}$. 
There exists a bounded bilinear map 
$$(\RUCb(G))^*\times\RUCb(G)\to\RUCb(G),\quad (\mu,\psi)\mapsto\mu\cdot\psi$$
defined by 
$(\mu\cdot\psi)(x)=\langle\mu,L_x\psi\rangle$ for all 
$x\in G$ (by (2.11) in \cite{Pa88}). 
The \emph{Arens-type product} on $(\RUCb(G))^*$ 
is the bounded bilinear mapping 
$$(\RUCb(G))^*\times(\RUCb(G))^*\to(\RUCb(G))^*,\quad 
(\mu,\nu)\mapsto\mu\cdot\nu$$
defined by 
$\langle \mu\cdot\nu,\psi\rangle:=\langle \mu, \nu\cdot\psi\rangle$ 
for all $\psi\in\RUCb(G)$. 

Similarly, by using the duality pairing 
$\langle\cdot,\cdot\rangle\colon(\LUCb(G))^*\times\LUCb(G)\to{\mathbb C}$,  
one defines a bounded bilinear map 
$$(\LUCb(G))^*\times\LUCb(G)\to\LUCb(G),\quad (\mu,\psi)\mapsto\mu\cdot\psi$$
by 
$(\mu\cdot\psi)(x)=\langle\mu,R_x\psi\rangle$ for all 
$x\in G$ (by the version of (2.11) in \cite{Pa88} for left uniformly continuous functions). 
The \emph{Arens-type product} on $(\LUCb(G))^*$ 
is the bounded bilinear mapping 
$$(\LUCb(G))^*\times(\LUCb(G))^*\to(\LUCb(G))^*,\quad 
(\mu,\nu)\mapsto\mu\cdot\nu$$
defined by 
$\langle \mu\cdot\nu,\psi\rangle:=\langle \nu, \mu\cdot\psi\rangle$ 
for all $\psi\in\LUCb(G)$. 
\qed
\end{definition}

\begin{remark}\label{Sigma}
\normalfont
Let $G$ be a topological group, consider the space $\Cb(G)$ 
consisting of the continuous elements of $\ell^\infty(G)$,  
and define 
$\sigma\colon\Cb(G)\to\Cb(G)$, $(\sigma(\psi))(x)=\overline{\psi(x^{-1})}$ 
whenever $x\in G$ and $\psi\in\Cb(G)$. 
The mapping $\sigma$ is an antilinear isometric $*$-endomorphism 
of the unital $C^*$-algebra $\Cb(G)$ which satisfies $\sigma^2=\id_{\Cb(G)}$ 
and has the following additional properties: 
\begin{itemize}
\item[{\rm(1)}] 
 For every $x\in G$ we have $L_x\circ\sigma=\sigma\circ R_{x^{-1}}$. 
\item[{\rm(2)}] 
 We have $\sigma(\RUCb(G))=\LUCb(G)$ and $\sigma(\LUCb(G))=\RUCb(G)$. 
\item[{\rm(3)}] 
  If we define $\Sigma\colon(\RUCb(G))^*\to(\LUCb(G))^*$ 
   as the anti-dual map of $\sigma\colon\LUCb(G)\to\RUCb(G)$, 
   that is, $\langle\Sigma(\mu),\phi\rangle=\overline{\langle\mu,\sigma(\phi)\rangle}$ 
   for all $\mu\in(\RUCb(G))^*$ and $\phi\in\LUCb(G)$, 
   then the diagram 
$$ \begin{CD} 
    (\RUCb(G))^*\times\RUCb(G) @>>> \RUCb(G) \\
    @V{\Sigma\times\sigma}VV @VV{\sigma}V \\
    (\LUCb(G))^*\times\LUCb(G) @>>> \LUCb(G)
 \end{CD}$$
   is commutative,where the horizontal arrows are 
   the maps introduced in Definition~\ref{me7}. 
\item[{\rm(4)}] The mapping $\Sigma\colon(\RUCb(G))^*\to(\LUCb(G))^*$ 
is antilinear, isometric, bijective, 
and for all $\mu_1,\mu_2\in(\RUCb(G))^*$ we have 
 $\Sigma(\mu_1\cdot\mu_2)=\Sigma(\mu_2)\cdot\Sigma(\mu_1)$. 
\end{itemize}
In fact, property (1) follows by a straightforward computation, 
and it implies property~(2) at once. 
For property~(3) note that for all $\mu\in(\RUCb(G))^*$ and $\psi\in\RUCb(G)$ 
we have 
$$\begin{aligned}
(\Sigma(\mu)\cdot\sigma(\psi))(x)
&=\langle\Sigma(\mu),R_x(\sigma(\psi))\rangle
=\langle\Sigma(\mu),\sigma(L_{x^{-1}}(\psi))\rangle \\
&=\overline{\langle\mu,\sigma^2(L_{x^{-1}}(\psi))\rangle}
=\overline{\langle\mu,L_{x^{-1}}(\psi)\rangle}
=\sigma(\mu\cdot\psi)(x)
\end{aligned}$$
whenever $x\in G$. 
To check property~(4), compute 
$$\begin{aligned}
\langle\Sigma(\mu_2)\cdot\Sigma(\mu_1),\varphi\rangle 
 &=\langle\Sigma(\mu_1),\Sigma(\mu_2)\cdot\varphi\rangle 
  \mathop{=}\limits^{(3)}\
   \langle\Sigma(\mu_1),\sigma(\mu_2\cdot\sigma(\varphi))\rangle \\
 &=\overline{\langle\mu_1,\mu_2\cdot\sigma(\varphi)\rangle}
  =\overline{\langle\mu_1\cdot\mu_2,\sigma(\varphi)\rangle}
  =\langle\Sigma(\mu_1\cdot\mu_2),\varphi\rangle
\end{aligned}$$
for every $\varphi\in\LUCb(G)$. 
\qed
\end{remark}

The facts described in the following theorem can be found 
in Theorems 2.5~and~4.1 of \cite{Gr}. 

\begin{theorem}\label{me8}
Every topological group $G$ has the following properties:  
\begin{itemize}
\item[{\rm(a)}] The Arens-type products 
make $(\RUCb(G))^*$ and $(\LUCb(G))^*$ 
into associative Banach algebras. 
\item[{\rm(b)}] Each continuous unitary representation 
$\pi\colon G\to\Bc(\Hc_\pi)$ gives rise to two representations 
of Banach algebras, 
$\widehat{\pi}_{\Rc}\colon(\RUCb(G))^*\to\Bc(\Hc_\pi)$ 
and 
$\widehat{\pi}_{\Lc}\colon(\LUCb(G))^*\to\Bc(\Hc_\pi)$, 
by means of the formulas 
\begin{equation}\label{integration}
(\widehat{\pi}_{\Rc}(\mu)\xi\mid\eta)
=\langle\mu,(\pi(\cdot)\xi\mid\eta)\rangle
\text{ and }
(\widehat{\pi}_{\Lc}(\nu)\xi\mid\eta)
=\langle\nu,(\pi(\cdot)\xi\mid\eta)\rangle 
\end{equation}
for all $\xi,\eta\in\Hc$, $\mu\in(\RUCb(G))^*$, and $\nu\in(\LUCb(G))^*$. 
These representations are related by the commutative diagram 
\begin{equation}\label{RL}
\begin{CD}
(\RUCb(G))^* @>{\widehat{\pi}_{\Rc}}>> \Bc(\Hc_\pi) \\
@V{\Sigma}VV @VV{S}V \\
(\LUCb(G))^* @>{\widehat{\pi}_{\Lc}}>> \Bc(\Hc_\pi)
\end{CD}
\end{equation}
where $S\colon\Bc(\Hc_\pi)\to\Bc(\Hc_\pi)$, $b\mapsto b^*$. 
\end{itemize}
\end{theorem}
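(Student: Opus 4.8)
The plan is to establish everything on the $\RUCb$ side by direct computation and then transfer it to the $\LUCb$ side by means of the map $\Sigma$ of Remark~\ref{Sigma}, so that the order-swap in the definition of the Arens-type product on $(\LUCb(G))^*$ never has to be dealt with by hand. (Alternatively one may simply cite Theorems~2.5 and~4.1 of \cite{Gr}.)

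For part~(a), recall from Definition~\ref{me7} (via (2.11) in \cite{Pa88}) that $\nu\cdot\psi\in\RUCb(G)$ for $\nu\in(\RUCb(G))^*$, $\psi\in\RUCb(G)$, and that $|(\nu\cdot\psi)(x)|=|\langle\nu,L_x\psi\rangle|\le\Vert\nu\Vert\,\Vert\psi\Vert_\infty$, whence $\Vert\nu\cdot\psi\Vert_\infty\le\Vert\nu\Vert\,\Vert\psi\Vert_\infty$; it follows at once that $\mu\cdot\nu$, defined by $\langle\mu\cdot\nu,\psi\rangle=\langle\mu,\nu\cdot\psi\rangle$, is a bounded functional with $\Vert\mu\cdot\nu\Vert\le\Vert\mu\Vert\,\Vert\nu\Vert$, so the product is bounded bilinear and submultiplicative. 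Associativity reduces to the module identity $(\nu\cdot\rho)\cdot\psi=\nu\cdot(\rho\cdot\psi)$, which in turn reduces to the translation identity $L_x(\rho\cdot\psi)=\rho\cdot(L_x\psi)$; I would verify the latter by evaluating both sides at $y\in G$ and using $L_y\circ L_x=L_{xy}$, obtaining $\langle\rho,L_{xy}\psi\rangle$ on each side, and then unwinding the definition of the Arens-type product twice. For $(\LUCb(G))^*$ I would not repeat anything: by property~(4) of Remark~\ref{Sigma}, $\Sigma$ is an antilinear isometric bijection with $\Sigma(\mu_1\cdot\mu_2)=\Sigma(\mu_2)\cdot\Sigma(\mu_1)$, and writing the elements of $(\LUCb(G))^*$ as $\Sigma(\mu_i)$ transports associativity and submultiplicativity from the $\RUCb$ side.

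For part~(b), I would first note that for a (strongly) continuous unitary $\pi$ each matrix coefficient $c_{\xi,\eta}:=(\pi(\cdot)\xi\mid\eta)$ is bounded by $\Vert\xi\Vert\,\Vert\eta\Vert$ and lies in $\UCb(G)=\RUCb(G)\cap\LUCb(G)$: indeed $L_x c_{\xi,\eta}=c_{\xi,\pi(x^{-1})\eta}$ and $R_x c_{\xi,\eta}=c_{\pi(x)\xi,\eta}$, and strong continuity of $\pi$ together with continuity of inversion makes $x\mapsto L_x c_{\xi,\eta}$ and $x\mapsto R_x c_{\xi,\eta}$ norm-continuous into $\ell^\infty(G)$. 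Hence for $\mu\in(\RUCb(G))^*$ the sesquilinear form $(\xi,\eta)\mapsto\langle\mu,c_{\xi,\eta}\rangle$ is bounded by $\Vert\mu\Vert$, so it is represented by a unique $\widehat\pi_{\Rc}(\mu)\in\Bc(\Hc_\pi)$ with $\Vert\widehat\pi_{\Rc}(\mu)\Vert\le\Vert\mu\Vert$, and $\mu\mapsto\widehat\pi_{\Rc}(\mu)$ is linear. The crucial step is multiplicativity: using $L_x c_{\xi,\eta}=c_{\xi,\pi(x^{-1})\eta}$ and unitarity of $\pi$ one computes $\nu\cdot c_{\xi,\eta}=c_{\widehat\pi_{\Rc}(\nu)\xi,\eta}$, and then $(\widehat\pi_{\Rc}(\mu\cdot\nu)\xi\mid\eta)=\langle\mu,\nu\cdot c_{\xi,\eta}\rangle=\langle\mu,c_{\widehat\pi_{\Rc}(\nu)\xi,\eta}\rangle=(\widehat\pi_{\Rc}(\mu)\widehat\pi_{\Rc}(\nu)\xi\mid\eta)$ for all $\xi,\eta$ gives $\widehat\pi_{\Rc}(\mu\cdot\nu)=\widehat\pi_{\Rc}(\mu)\widehat\pi_{\Rc}(\nu)$.

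It remains to treat $\widehat\pi_{\Lc}$ and the diagram~\eqref{RL}, and here I would route through $\Sigma$ once more. The key observation is $\sigma(c_{\xi,\eta})=c_{\eta,\xi}$, since $\sigma(c_{\xi,\eta})(x)=\overline{(\pi(x^{-1})\xi\mid\eta)}=(\pi(x)\eta\mid\xi)$; then, $\Sigma$ being the anti-dual of $\sigma$, one gets $\langle\Sigma(\mu),c_{\xi,\eta}\rangle=\overline{\langle\mu,c_{\eta,\xi}\rangle}=\overline{(\widehat\pi_{\Rc}(\mu)\eta\mid\xi)}=(\widehat\pi_{\Rc}(\mu)^*\xi\mid\eta)$, i.e.\ $\widehat\pi_{\Lc}(\Sigma(\mu))=\widehat\pi_{\Rc}(\mu)^*=S(\widehat\pi_{\Rc}(\mu))$, which is exactly the commutativity of~\eqref{RL}. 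Since $\Sigma$ is bijective, $S$ is an anti-homomorphism, and $\Sigma(\mu_1\cdot\mu_2)=\Sigma(\mu_2)\cdot\Sigma(\mu_1)$, multiplicativity of $\widehat\pi_{\Lc}$ follows formally from that of $\widehat\pi_{\Rc}$, and its boundedness by $\Vert\cdot\Vert$ is inherited as well. I expect the only genuine care to be needed in the translation identities of part~(a) and in checking $\nu\cdot c_{\xi,\eta}=c_{\widehat\pi_{\Rc}(\nu)\xi,\eta}$ in part~(b); everything on the $\LUCb$ side then comes for free from Remark~\ref{Sigma}.
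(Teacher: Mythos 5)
Your proposal is correct and follows essentially the same route as the paper: the same key facts drive the argument in both cases, namely that the matrix coefficients $\psi_{\xi,\eta}=(\pi(\cdot)\xi\mid\eta)$ lie in $\UCb(G)$, the identity $\mu\cdot\psi_{\xi,\eta}=\psi_{\widehat{\pi}_{\Rc}(\mu)\xi,\eta}$, and the relation $\sigma(\psi_{\xi,\eta})=\psi_{\eta,\xi}$ which yields the commutativity of the diagram. The only cosmetic differences are that the paper obtains part (a) by citing (2.8) and (2.11) of \cite{Pa88} rather than verifying associativity directly, and that it proves multiplicativity of $\widehat{\pi}_{\Lc}$ by repeating the computation via $\nu_1\cdot\psi_{\xi,\eta}=\psi_{\xi,\widehat{\pi}_{\Lc}(\nu_1)^*\eta}$ instead of transporting it along $\Sigma$ as you do.
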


\begin{proof}
Assertion~(a) follows at once by (2.8) and 
(the left-sided version of) (2.11) in \cite{Pa88}. 
See also Example~(19.23)(b) in \cite{HR63}, \cite{Te63}, and \cite{Bu50}. 

For assertion (b), firstly note that the matrix coefficients 
$\psi_{\xi,\eta}=(\pi(\cdot)\xi\mid\eta)$ 
belong to the function space $\UCb(G)=\RUCb(G)\cap\LUCb(G)$ 
for arbitrary $\xi,\eta\in\Hc_\pi$. 
To see this, just note that 
for all $x\in G$ we have $L_x(\psi_{\xi,\eta})=\psi_{\xi,\pi(x)^*\eta}$
and  
$R_x(\psi_{\xi,\eta})=\psi_{\pi(x)\xi,\eta}$, 
and then use the continuity of the representation~$\pi$. 
Thus the right-hand sides of both equalities 
in \eqref{integration} make sense, 
and then by means of the estimate 
$\Vert\psi_{\xi,\eta}\Vert_\infty\le\Vert\xi\Vert\Vert\eta\Vert$ 
we get $\widehat{\pi}_{\Rc}(\mu),\widehat{\pi}_{\Lc}(\nu)\in\Bc(\Hc_\pi)$ and 
$\Vert\widehat{\pi}_{\Rc}(\mu)\Vert\le\Vert\mu\Vert$ 
and $\Vert\widehat{\pi}_{\Lc}(\nu)\Vert\le\Vert\nu\Vert$ . 
In addition, since $\sigma(\psi_{\xi,\eta})=\psi_{\eta,\xi}$, 
we get 
$(\widehat{\pi}_{\Lc}(\Sigma(\mu))\xi\mid\eta)
=\langle\Sigma(\mu),\psi_{\xi,\eta}\rangle
=\overline{\langle\mu,\sigma(\psi_{\xi,\eta})\rangle}
=\overline{\langle\mu,\psi_{\eta,\xi}\rangle}
=\overline{(\widehat{\pi}_{\Rc}(\mu)\eta\mid\xi)}
=(\widehat{\pi}_{\Rc}(\mu)^*\xi\mid\eta)
$, 
whence $\widehat{\pi}_{\Lc}(\Sigma(\mu))=\widehat{\pi}_{\Rc}(\mu)^*$, 
and thus the diagram \eqref{RL} is commutative. 
To conclude the proof of (b), let $\mu_1,\mu_2\in(\RUCb(G))^*$. 
Then 
$(\widehat{\pi}_{\Rc}(\mu_1\cdot\mu_2)\xi\mid\eta)
=\langle\mu_1\cdot\mu_2,\psi_{\xi,\eta}\rangle
=\langle\mu_1,\mu_2\cdot\psi_{\xi,\eta}\rangle
=\langle\mu_1,\psi_{\widehat{\pi}_{\Rc}(\mu_2)\xi,\eta}\rangle
=(\widehat{\pi}_{\Rc}(\mu_1)\widehat{\pi}_{\Rc}(\mu_2)\xi\mid\eta)
$, where the next-to-last equality holds 
since for every $x\in G$ we have 
$(\mu_2\cdot\psi_{\xi,\eta})(x)
=\langle\mu_2,L_x(\psi_{\xi,\eta})\rangle
=\langle\mu_2,\psi_{\xi,\pi(x)^*\eta}\rangle
=(\widehat{\pi}_{\Rc}(\mu_2)\xi\mid \pi(x)^*\eta)
=(\pi(x)\widehat{\pi}_{\Rc}(\mu_2)\xi\mid \eta) 
=\psi_{\widehat{\pi}_{\Rc}(\mu_2)\xi,\eta}(x)$. 
Thus $\widehat{\pi}_{\Rc}$ is an algebra representation. 
Similarly, for $\nu_1,\nu_2\in(\LUCb(G))^*$ we can check that 
$\nu_1\cdot\psi_{\xi,\eta}=\psi_{\xi,\widehat{\pi}_{\Lc}(\nu_1)^*\eta}$, 
whence as above we get 
$(\widehat{\pi}_{\Lc}(\nu_1\cdot\nu_2)\xi\mid\eta)
=(\widehat{\pi}_{\Lc}(\nu_1)\widehat{\pi}_{\Lc}(\nu_2)\xi\mid\eta)
$, 
and the proof ends.
\end{proof}

\begin{remark}\label{me9}
\normalfont
In the setting of Theorem~\ref{me8}, 
$\RUCb(G)$ and $\LUCb(G)$ are 
commutative unital isomorphic $C^*$-algebras, 
hence there exist $*$-isomorphisms 
$\RUCb(G)\simeq\Cc(\Mg_0(G))\simeq\LUCb(\Mg_0(G))$, 
where $\Mg_0(G)$ is a compact topological space. 
In the special case when the group $G$ is compact we have 
$\Mg_0(G)=G$, $\Cc(G)=\RUCb(G)=\LUCb(G)=\UCb(G)$, 
and the involutive Banach algebra 
$(\UCb(G))^*$ 
is the convolution measure algebra 
of $G$. 
In the general case, the topological duals of $\RUCb(G)$ and $\LUCb(G)$ 
still consist of the (not necessarily positive) measures 
on the compact space  $\Mg_0(G)$, however the Arens-type products 
may not be defined by convolution formulas 
by reasonable convolution formulas involving $\Mg_0(G)$ 
(cf.~the comment preceding Proposition~(2.25) in \cite{Pa88}). 
See however the general theory of convolutions of functionals 
developed in Chapter~5 of \cite{HR63} or the other references mentioned 
in connection with the proof of Theorem~\ref{me8}(a) above. 
And another problem to be dealt with is 
to find a method to distinguish in the set of pairs of 
representations  of the Banach algebras $(\RUCb(G))^*$ and $(\LUCb(G))^*$ 
that make the diagram \eqref{RL} commutative, 
the ones that come from unitary representations of $G$ 
by means of~\eqref{integration}. 
In this connection, let us recall that there exists a promising 
approach to an axiomatic theory of group algebras 
by means of the host algebras, that is,  
$C^*$-algebras whose representations correspond in a one-to-one fashion  
with the unitary representations of a given group; 
see \cite{Gr05}, \cite{GN07}, 
and the references therein. 
\qed
\end{remark}

\begin{remark}\label{me11}
\normalfont
Let $G$ be an amenable topological group and pick 
a left invariant mean
$\mu\colon\RUCb(G)\to{\mathbb C}$. 
Then $\mu$ is a state of the $C^*$-algebra $\RUCb(G)$, 
and the corresponding 
Gelfand-Naimark-Segal construction 
leads to a cyclic $*$-representation 
$\iota_\mu\colon\RUCb(G)\to\Bc(\Hc^{(\mu)})$. 
Recall that the Hilbert space $\Hc^{(\mu)}$ is obtained 
out of $\RUCb(G)$ as a quotient followed by a completion 
with respect to the non-negative definite, sesquilinear form 
$\RUCb(G)\times\RUCb(G)\to{\mathbb C}$, 
$(\psi,\chi)\mapsto\mu(\psi\chi^*)$. 
For each $x\in G$ we have 
$\mu\circ L_{x^{-1}}|_{\RUCb(G)}=\mu$, 
hence the mapping $\RUCb(G)\to \RUCb(G)$, $\psi\mapsto L_{x^{-1}}\psi$ 
induces a unitary representation $\lambda_\mu\colon G\to\Bc(\Hc^{(\mu)})$, 
which is easily seen to be continuous. 
In the special case when $G$ is compact and $\mu$ is 
the probability Haar measure on $G$, we have $\Hc^{(\mu)}=L^2(G,\mu)$ 
and $\lambda_\mu$ is the regular representation of $G$. 
For this reason, in the general case  of an amenable group $G$, 
one can think of $\lambda_\mu$ 
as a \emph{regular representation associated with 
the left invariant mean~$\mu$}. 

We have to point out that it may happen that $\dim\Hc^{(\mu)}=1$. 
For instance, this is the case if $G$ is an extremely amenable group 
and the left invariant mean $\mu$ is chosen to be multiplicative. 
(See \cite{Pe06} for specific examples and for details on the latter notions.) 
It may also happen that the regular representation is trivial, 
in the sense that $\lambda_\mu(x)=\1$ for all $x\in G$. 
This is the case for the exotic Banach-Lie groups (see \cite{Ba83}), 
which are abelian topological groups that 
do not have any non-trivial continuous representation. 
It is not difficult to verify that 
the regular representation $\lambda_\mu$ is trivial if and only if 
$\mu(\chi\psi)=\mu(\chi L_x\psi)$ for all $\chi,\psi\in\RUCb(G)$ and $x\in G$. 
\qed
\end{remark}

\section{Lifting group decompositions to covering groups}

This section has a technical character and its main purpose 
is to provide tools for enriching the class of reductive Banach-Lie groups 
to be set forth in the next section. 
In the proof of the following statements we use some ideas 
from the proofs of Theorem 6.31~and~6.46 in \cite{Kn96}. 

\begin{lemma}\label{LL1}
Assume that $G$ and $\widetilde{G}$ are Banach-Lie groups, 
and $e\colon\widetilde{G}\to G$ is a covering homomorphism. 
Let $K$ be any Banach-Lie subgroup of $G$, denote 
$\widetilde{K}:=e^{-1}(K)$,  
and define 
$\psi\colon\widetilde{G}/\widetilde{K}\to G/K$, 
$\widetilde{g}\widetilde{K}\mapsto e(\widetilde{g})K$. 
Then the following assertions hold: 
\begin{itemize}
\item[{\rm(i)}] $\widetilde{K}$ is a Banach-Lie subgroup of $\widetilde{G}$ 
and the mapping $e|_{\widetilde{K}}\colon\widetilde{K}\to K$ 
is a covering homomorphism. 
\item[{\rm(ii)}] The mapping $\psi$ is a well-defined diffeomorphism. 
\item[{\rm(iii)}] If the group $\widetilde{G}$ is connected 
and the smooth homogeneous space $G/K$ is simply connected, 
then both $\widetilde{K}$ and $K$ are connected.  
\end{itemize}
\end{lemma}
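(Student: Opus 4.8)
The plan is to prove the three assertions in order, since each relies on the previous one. For (i), I would use the standard fact (available for Banach-Lie groups; see the references on infinite-dimensional Lie groups cited in the excerpt) that the preimage of a Banach-Lie subgroup under a covering homomorphism is again a Banach-Lie subgroup: the covering homomorphism $e$ is a local diffeomorphism, so $\widetilde{K}=e^{-1}(K)$ is locally modeled on $\kg=\Lie(K)$ near each of its points, hence $\widetilde{K}$ is a submanifold and a subgroup, therefore a Banach-Lie subgroup with $\Lie(\widetilde{K})=\kg$. To see that $e|_{\widetilde{K}}\colon\widetilde{K}\to K$ is a covering homomorphism, note first that it is surjective (since $e$ is surjective and $\widetilde{K}=e^{-1}(K)$), that it is a local diffeomorphism (being a restriction of the local diffeomorphism $e$ to open subsets mapping onto open subsets of $K$), and that evenly-covered neighborhoods in $G$ restrict to evenly-covered neighborhoods in $K$; so $e|_{\widetilde{K}}$ is a covering map that is also a homomorphism.

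For (ii), $\psi$ is well-defined because if $\widetilde{g}_1\widetilde{K}=\widetilde{g}_2\widetilde{K}$ then $\widetilde{g}_1^{-1}\widetilde{g}_2\in\widetilde{K}$, so $e(\widetilde{g}_1)^{-1}e(\widetilde{g}_2)=e(\widetilde{g}_1^{-1}\widetilde{g}_2)\in K$, giving $e(\widetilde{g}_1)K=e(\widetilde{g}_2)K$. It is surjective because $e$ is surjective. For injectivity, suppose $e(\widetilde{g}_1)K=e(\widetilde{g}_2)K$; then $e(\widetilde{g}_1^{-1}\widetilde{g}_2)\in K$, so $\widetilde{g}_1^{-1}\widetilde{g}_2\in e^{-1}(K)=\widetilde{K}$, hence $\widetilde{g}_1\widetilde{K}=\widetilde{g}_2\widetilde{K}$. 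Smoothness of $\psi$ and of $\psi^{-1}$ follows by working locally: the quotient maps $\widetilde{G}\to\widetilde{G}/\widetilde{K}$ and $G\to G/K$ are smooth submersions admitting local smooth sections, and $\psi$ is compatible with $e$ via these submersions; since $e$ is a local diffeomorphism and $\widetilde{K}=e^{-1}(K)$, a local section of $G\to G/K$ pulls back through $e$ to a local section of $\widetilde{G}\to\widetilde{G}/\widetilde{K}$, which gives local smooth inverses for $\psi$. Hence $\psi$ is a diffeomorphism.

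For (iii), assume $\widetilde{G}$ is connected and $G/K$ simply connected. By (ii), $\widetilde{G}/\widetilde{K}$ is diffeomorphic to $G/K$, hence also connected and simply connected. Now consider the fibration $\widetilde{K}\hookrightarrow\widetilde{G}\to\widetilde{G}/\widetilde{K}$ and its long exact homotopy sequence; the relevant portion reads
\[
\pi_1(\widetilde{G})\longrightarrow\pi_1(\widetilde{G}/\widetilde{K})\longrightarrow\pi_0(\widetilde{K})\longrightarrow\pi_0(\widetilde{G}).
\]
Since $\widetilde{G}/\widetilde{K}$ is simply connected, $\pi_1(\widetilde{G}/\widetilde{K})$ is trivial, and since $\widetilde{G}$ is connected, $\pi_0(\widetilde{G})$ is trivial; exactness forces $\pi_0(\widetilde{K})$ to be trivial, i.e.\ $\widetilde{K}$ is connected. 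Finally, $K=e(\widetilde{K})$ is the continuous image of the connected set $\widetilde{K}$, hence connected as well. The one point requiring care — and the step I expect to be the main obstacle — is the validity of the long exact homotopy sequence for the bundle $\widetilde{G}\to\widetilde{G}/\widetilde{K}$ in the Banach-Lie category; this is fine because the quotient map is a locally trivial smooth fiber bundle (it admits local smooth sections since $\widetilde K$ is a Banach-Lie subgroup, so one gets local trivializations from the exponential chart transverse to $\kg$), so the homotopy-lifting property applies and the exact sequence is available exactly as in the finite-dimensional case, following the pattern of the proofs of Theorems 6.31 and 6.46 in \cite{Kn96}.
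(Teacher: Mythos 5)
Your proposal is correct and follows essentially the same route as the paper: (i) via the local-diffeomorphism property of $e$ (the paper cites Proposition~4.8 and Remark~C.13(b) of \cite{Be06} for the subgroup and covering claims), (ii) via bijectivity plus compatibility of $\psi$ with the submersions and the local diffeomorphism $e$, and (iii) via the long exact homotopy sequence of the bundle $\widetilde{K}\hookrightarrow\widetilde{G}\to\widetilde{G}/\widetilde{K}$ combined with the homeomorphism $\widetilde{G}/\widetilde{K}\simeq G/K$ from (ii). No gaps.
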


\begin{proof}
Since $\Lie(e)=T_{\1}e\colon\Lie(\widetilde{G})\to\Lie(G)$ 
is an isomorphism of Banach-Lie algebras, 
it follows by Proposition~4.8 in \cite{Be06} 
that $\widetilde{K}$ 
is a Banach-Lie subgroups of $G$ and the tangent map 
$\Lie(e)|_{T_{\1}\widetilde{K}}\colon T_{\1}\widetilde{K}\to T_{\1}K$. 
Then Remark~C.13(b) in \cite{Be06} shows that 
$e|_{\widetilde{K}}\colon\widetilde{K}\to X$ 
is a covering map. 
This completes the proof of assertion~(i). 

To prove assertion~(ii), 
note that $\psi$ is injective since $\widetilde{K}=e^{-1}(K)$, 
and $\psi$ is surjective because $e$ is so. 
On the other hand, we have a commutative diagram 
$$\begin{CD}
\widetilde{G} @>{e}>> G\\
@VVV @VVV \\
\widetilde{G}/\widetilde{K} @>{\psi}>> G/K
\end{CD}
$$ 
where the vertical arrows (which are the quotient maps) 
are submersions. 
Since the covering map $e$ is a local diffeomorphism, 
it follows from this commutative diagram that 
$\psi$ is a local diffeomorphism as well.  
Then $\psi$ is actually a diffeomorphism since 
we have already seen that it is bijective.  

For assertion~(iii), recall from~(i) that $\widetilde{K}$ 
is a covering group of $K$, so it will be enough to show that 
$\widetilde{K}$ is connected. 
And the latter property follows from 
the long exact sequence of homotopy groups 
$$0\leftarrow\pi_0(\widetilde{G}/\widetilde{K})\leftarrow
\pi_0(\widetilde{G})\leftarrow\pi_0(\widetilde{K})
\leftarrow\pi_1(\widetilde{G}/\widetilde{K})\leftarrow\cdots $$
since $\pi_0(\widetilde{G})=\{0\}$ by the assumption on $\widetilde{G}$, 
while $\pi_1(\widetilde{G}/\widetilde{K})=\{0\}$ 
by the assumption on $G/K$ along with the fact that 
$G/K$ is homeomorphic to $\widetilde{G}/\widetilde{K}$ 
according to assertion~(ii). 
\end{proof}

We now come to a proposition to the effect that 
the Cartan decompositions lift to the covering groups. 

\begin{proposition}\label{LL2}
Let $G$ and $\widetilde{G}$ be to Banach-Lie groups, 
and assume that $e\colon\widetilde{G}\to G$ is a covering homomorphism. 
Now let $K$ be any Banach-Lie subgroup of $G$ and denote 
$\widetilde{K}:=e^{-1}(K)$. 
Denote $\Lie(G)=\gg$ and $\Lie(K)=\kg$, 
and assume that $\pg$ is a closed linear subspace 
of $\gg$ such that $\gg=\kg\dotplus\pg$ and the mapping 
$\varphi\colon K\times\pg\to G$, 
$(k,X)\mapsto k\cdot\exp_GX$
is a diffeomorphism. 
Moreover denote 
$\Lie(\widetilde{G}):=\widetilde{\gg}$, 
$\Lie(\widetilde{K}):=\widetilde{\kg}$, 
and $\widetilde{\pg}:=\Lie(e)^{-1}(\pg)$. 
Then the mapping 
$\widetilde{\varphi}\colon
\widetilde{K}\times\widetilde{\pg}\to\widetilde{G}$, 
$(\widetilde{k},\widetilde{X})\mapsto 
\widetilde{k}\cdot\exp_{\widetilde{G}}\widetilde{X}
$
is a diffeomorphism as well. 
\end{proposition}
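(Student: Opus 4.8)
The plan is to realise $\widetilde{\varphi}$ as a lift of $\varphi$ through the covering homomorphism $e$ and then argue as in the proof of Lemma~\ref{LL1}(ii), using a commutative square relating $\widetilde{\varphi}$ and $\varphi$, and to supplement this with a direct verification that $\widetilde{\varphi}$ is bijective. First I would record the naturality of the exponential map, $e\circ\exp_{\widetilde{G}}=\exp_G\circ\,\Lie(e)$, which holds because $e$ is a morphism of Banach-Lie groups; together with the fact that $e$ is a group homomorphism this gives, for all $\widetilde{k}\in\widetilde{K}$ and $\widetilde{X}\in\widetilde{\pg}$,
$$e\bigl(\widetilde{\varphi}(\widetilde{k},\widetilde{X})\bigr)
=e(\widetilde{k})\cdot\exp_G\bigl(\Lie(e)\widetilde{X}\bigr)
=\varphi\bigl(e(\widetilde{k}),\Lie(e)\widetilde{X}\bigr).$$
Hence, putting $\Psi:=(e|_{\widetilde{K}})\times(\Lie(e)|_{\widetilde{\pg}})\colon\widetilde{K}\times\widetilde{\pg}\to K\times\pg$, we obtain a commutative square $e\circ\widetilde{\varphi}=\varphi\circ\Psi$. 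I would also observe that $\widetilde{\varphi}$ is smooth, being the composition of the smooth map $\widetilde{X}\mapsto\exp_{\widetilde{G}}\widetilde{X}$ on the closed subspace $\widetilde{\pg}\subseteq\widetilde{\gg}$ with the multiplication of $\widetilde{G}$.

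The second step is to show that $\widetilde{\varphi}$ is a local diffeomorphism. By Lemma~\ref{LL1}(i) the map $e|_{\widetilde{K}}$ is a covering, hence a local diffeomorphism, while $\Lie(e)|_{\widetilde{\pg}}\colon\widetilde{\pg}\to\pg$ is a topological linear isomorphism, since $\widetilde{\pg}=\Lie(e)^{-1}(\pg)$ and $\Lie(e)$ is an isomorphism of Banach-Lie algebras (pulling the decomposition $\gg=\kg\dotplus\pg$ back through $\Lie(e)$ even yields $\widetilde{\gg}=\widetilde{\kg}\dotplus\widetilde{\pg}$, so $\widetilde{\varphi}$ is genuinely a morphism of Banach manifolds). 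Therefore $\Psi$ is a local diffeomorphism, and so is $\varphi\circ\Psi$ because $\varphi$ is a diffeomorphism by hypothesis. Since $e$ is also a local diffeomorphism and $e\circ\widetilde{\varphi}=\varphi\circ\Psi$, it follows that $\widetilde{\varphi}$ is a local diffeomorphism: near any point one factors $\widetilde{\varphi}=s\circ(\varphi\circ\Psi)$ with $s$ a smooth local section of $e$, exactly as in the argument for Lemma~\ref{LL1}(ii).

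It remains to prove that $\widetilde{\varphi}$ is bijective, since a bijective local diffeomorphism of Banach manifolds is automatically a diffeomorphism. For injectivity, if $\widetilde{\varphi}(\widetilde{k}_1,\widetilde{X}_1)=\widetilde{\varphi}(\widetilde{k}_2,\widetilde{X}_2)$, then applying $e$ and using injectivity of $\varphi$ gives $\Psi(\widetilde{k}_1,\widetilde{X}_1)=\Psi(\widetilde{k}_2,\widetilde{X}_2)$, whence $\Lie(e)\widetilde{X}_1=\Lie(e)\widetilde{X}_2$ and thus $\widetilde{X}_1=\widetilde{X}_2$ by injectivity of $\Lie(e)$; right-cancelling $\exp_{\widetilde{G}}\widetilde{X}_1$ in the group $\widetilde{G}$ then yields $\widetilde{k}_1=\widetilde{k}_2$. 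For surjectivity, given $\widetilde{g}\in\widetilde{G}$ set $g:=e(\widetilde{g})$ and write $g=k\cdot\exp_GX$ with $k\in K$, $X\in\pg$, which is possible since $\varphi$ is onto; let $\widetilde{X}\in\widetilde{\pg}$ be the unique element with $\Lie(e)\widetilde{X}=X$, so that $e(\exp_{\widetilde{G}}\widetilde{X})=\exp_GX$. Then
$$e\bigl(\widetilde{g}\cdot\exp_{\widetilde{G}}(-\widetilde{X})\bigr)
=g\cdot(\exp_GX)^{-1}=k\in K,$$
so $\widetilde{k}:=\widetilde{g}\cdot\exp_{\widetilde{G}}(-\widetilde{X})$ lies in $\widetilde{K}=e^{-1}(K)$ and $\widetilde{g}=\widetilde{k}\cdot\exp_{\widetilde{G}}\widetilde{X}=\widetilde{\varphi}(\widetilde{k},\widetilde{X})$.

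The step needing the most care is the local-diffeomorphism argument: one must correctly invoke the Banach-manifold fact that if $e\circ\widetilde{\varphi}$ is a local diffeomorphism and $e$ is a local diffeomorphism then $\widetilde{\varphi}$ is one, and check that $\widetilde{\pg}$ is a closed (indeed complemented) subspace so that $\widetilde{K}\times\widetilde{\pg}$ is a Banach manifold; both are routine consequences of $\Lie(e)$ being an isomorphism of Banach-Lie algebras. Everything else is bookkeeping built on the naturality of $\exp$ and on $\widetilde{K}$ being the full preimage $e^{-1}(K)$.
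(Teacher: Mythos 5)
Your proof is correct and follows essentially the same route as the paper: lift $\varphi$ through the covering via the commutative square $e\circ\widetilde{\varphi}=\varphi\circ\Psi$ to get that $\widetilde{\varphi}$ is a local diffeomorphism, then verify bijectivity by hand. Your surjectivity step is a slight streamlining (defining $\widetilde{k}=\widetilde{g}\cdot\exp_{\widetilde{G}}(-\widetilde{X})$ directly and using $\widetilde{K}=e^{-1}(K)$, rather than correcting an arbitrary preimage of $k$ by an element of $e^{-1}(\1)$ as the paper does), and your cancellation of $\exp_{\widetilde{G}}\widetilde{X}_1$ in the injectivity step is if anything cleaner than the paper's phrasing, but the substance is identical.
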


\begin{proof}
First note that $\widetilde{K}$ is a Banach-Lie subgroup of $\widetilde{G}$ 
by Lemma~\ref{LL1}, 
and $\Lie(e)\colon\widetilde{\gg}\to\gg$ is an isomorphism of 
Banach-Lie algebras, so that 
$\Lie(e)\widetilde{\kg}=\kg$ and 
$\widetilde{\gg}=\widetilde{\kg}\dotplus\widetilde{\pg}$.  
Now note that there exists a commutative diagram 
$$\begin{CD}
\widetilde{K}\times\widetilde{\pg} @>{\widetilde{\varphi}}>> \widetilde{G}\\
@V{e|_{\widetilde{K}}\times\Lie(e)|_{\widetilde{\pg}}}VV @VV{e}V \\
K\times\pg @>{\varphi}>> G
\end{CD}
$$
whose vertical arrows are covering maps (see Lemma~\ref{LL1}(i)). 
Since $\varphi$ is a diffeomorphism by assumption, 
it then follows that $\widetilde{\varphi}$ is a local diffeomorphism. 
To get the wished-for conclusion, 
we still have to prove that $\widetilde{\varphi}$ is a bijective map. 

To check that $\widetilde{\varphi}$ is injective, 
let $\widetilde{k}_j\in\widetilde{K}_j$ 
and $\widetilde{X}_j\in\widetilde{\pg}_j$ for $j=1,2$ 
such that 
$\widetilde{k}_1\cdot\exp_{\widetilde{G}}\widetilde{X}_1
=\widetilde{k}_2\cdot\exp_{\widetilde{G}}\widetilde{X}_2$. 
By applying the map $e$ to both sides of the latter equality, 
and using the commutation relation between the exponential maps 
and group homomorphisms (see e.g., Remark~2.34 in \cite{Be06}), 
we get 
$e(\widetilde{k}_1)\cdot\exp_{G}(\Lie(e)\widetilde{X}_1)
=e(\widetilde{k}_2)\cdot\exp_{G}(\Lie(e)\widetilde{X}_2)$. 
Since $\varphi$ is injective, it then follows that 
$\Lie(e)\widetilde{X}_1=\Lie(e)\widetilde{X}_2$ and
$e(\widetilde{k}_1)=e(\widetilde{k}_2)$.
The first of these equalities implies that 
$\widetilde{X}_1=\widetilde{X}_2$, whence 
$\widetilde{k}_1=\widetilde{k}_2$ by the assumption on 
$\widetilde{k}_j\in\widetilde{K}_j$ 
and $\widetilde{X}_j\in\widetilde{\pg}_j$ for $j=1,2$. 
Now, to prove that $\widetilde{\varphi}$ is surjective, 
let $\widetilde{g}\in\widetilde{G}$ arbitrary. 
Then $e(\widetilde{g})\in G$ 
hence there exist $k\in K$ and $X\in\pg$ such that 
$e(\widetilde{g})=k\cdot\exp_GX$ 
since $\varphi$ is surjective. 
Further on, pick $\widetilde{k}_0\in e^{-1}(k)$ arbitrary 
and denote $\widetilde{X}:=\Lie(e)^{-1}X$. 
Then $e(\widetilde{g})
=e(\widetilde{k}_0)\cdot\exp_{G}(\Lie(e)\widetilde{X})
=e(\widetilde{k}_0\cdot\exp_{\widetilde{G}}\widetilde{X})$, 
so that by denoting 
$\widetilde{z}:=\widetilde{k}_0\cdot\exp_{\widetilde{G}}\widetilde{X}
\cdot\widetilde{g}^{-1}$ 
we have $\widetilde{z}\in e^{-1}(\1)\subseteq\widetilde{K}$. 
Thus $\widetilde{k}:=\widetilde{z}^{-1}\widetilde{k}_0\in\widetilde{K}$ 
and we have 
$\widetilde{g}=\widetilde{k}\cdot\exp_{\widetilde{G}}\widetilde{X}$, 
which concludes the proof.
\end{proof}

The next proposition shows that the familiar integration of 
local Cartan involutions to global Cartan involutions 
carries over to the setting of Banach-Lie groups. 

\begin{proposition}\label{LL3}
Let $G$ be a connected Banach-Lie group with the Lie algebra 
$\Lie(G)=\gg$, $K$ a Banach-Lie subgroup of $G$, 
and assume that there exists an automorphism 
$\theta\in\Aut(\gg)$ such that $\theta^2=\id_{\gg}$, 
$\Lie(K)=\Ker(\theta-\id_{\gg})$, and 
the mapping 
$\varphi\colon K\times\pg\to G$,
$(k,X)\mapsto k\cdot\exp_GX$
is a diffeomorphism. 
Then there exists a unique automorphism 
$\Theta\in\Aut(G)$ such that 
$\Lie(\Theta)=\theta$ and 
$K=\{g\in G\mid\Theta(g)=g\}$. 
\end{proposition}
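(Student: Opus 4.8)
The strategy is to exploit the diffeomorphism $\varphi\colon K\times\pg\to G$, $(k,X)\mapsto k\cdot\exp_GX$, to manufacture the global automorphism $\Theta$ directly as a formula, rather than integrating $\theta$ along a connectedness argument. Writing $\gg=\kg\dotplus\pg$ with $\pg=\Ker(\theta+\id_\gg)$, every $g\in G$ has a unique expression $g=k\cdot\exp_GX$ with $k\in K$, $X\in\pg$; define
\[
\Theta(g):=k\cdot\exp_G(-X).
\]
Since $\varphi$ is a diffeomorphism, $\Theta$ is a well-defined smooth map, and it is its own inverse, hence a diffeomorphism of the manifold $G$. The first real task is to verify that $\Theta$ is a group homomorphism. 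Here I would first treat the case that is forced on us: $\Theta(\exp_GX)=\exp_G(-X)=\exp_G(\theta X)$ for $X\in\pg$, and $\Theta|_K=\id_K$, so $\Theta$ agrees on a neighbourhood of $\1$ with the local automorphism integrating $\theta$ (which exists by standard Banach-Lie theory, e.g.\ as in \cite{Be06}). Thus $\Theta$ is a homomorphism on a symmetric identity neighbourhood $V$ of $G$; since $G$ is connected, $G=\bigcup_{n\ge1}V^n$, and one extends multiplicativity by the usual propagation argument — if $\Theta$ is multiplicative on $V^n$ and $g\in V^{n+1}$ write $g=vg'$ with $v\in V$, $g'\in V^n$.

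**Carrying it out.**
To run that propagation cleanly I would instead argue as follows: let $\Theta_0$ be the local automorphism with $\Lie(\Theta_0)=\theta$, defined on a connected symmetric identity neighbourhood $V$, and let $\Theta$ be the globally defined smooth involution above. The set $U=\{g\in G\mid \Theta(gh)=\Theta(g)\Theta(h)\text{ for all }h\in V\}$ is open (by continuity and local multiplicativity of $\Theta$ near each point, using that $\Theta$ locally coincides with a chart expression built from $\Theta_0$ and the $K\times\pg$ decomposition) and contains $V$; standard subgroup-generation arguments show $U=G$, whence $\Theta$ is a homomorphism on all of $G$. Then $\Lie(\Theta)=\theta$ because $\Theta$ and $\Theta_0$ agree near $\1$. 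For the fixed-point description: clearly $K\subseteq\{g\mid\Theta(g)=g\}$; conversely if $\Theta(g)=g$ with $g=k\exp_GX$, $X\in\pg$, then $k\exp_G(-X)=k\exp_GX$, so $\exp_G(2X)=\1$; but $t\mapsto \exp_G(tX)$ is injective on a neighbourhood of $0$ and, by the diffeomorphism $\varphi$, the curve $t\mapsto\exp_G(tX)$ meets $K=\varphi(K\times\{0\})$ only at $t=0$, forcing $X=0$ and $g=k\in K$. Uniqueness of $\Theta$ is immediate: any automorphism with $\Lie(\cdot)=\theta$ agrees with $\Theta_0$ near $\1$, hence with $\Theta$ on $V$, hence on $G$ by connectedness.

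**The main obstacle.**
I expect the delicate point to be the rigorous verification that the globally-defined involution $\Theta$ is a homomorphism — i.e.\ upgrading ``$\Theta$ agrees with the local automorphism $\Theta_0$ on $V$'' to a genuine propagation over all of $G$, since $\Theta$ was defined by an ad hoc formula and a priori has no multiplicativity built in away from $\1$. The cleanest route is presumably the one used in \cite{Kn96} for Theorems 6.31 and 6.46: show that $\Theta$ coincides with the automorphism obtained by noting that $\Theta_0$ extends to an automorphism of the simply connected cover $\widetilde G$ (where integration of $\theta$ is automatic), then descend using that $\Theta_0$ preserves the relevant discrete central subgroup — or, more in the spirit of Proposition~\ref{LL2}, transport the problem to $\widetilde G$, where $\widetilde K$ is connected by Lemma~\ref{LL1}(iii) and the Cartan map $\widetilde\varphi$ is a diffeomorphism, so that the fixed-point set of the lifted involution is exactly $\widetilde K$, and then push back down through the covering $e$. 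Either way, connectedness of $G$ is what makes the argument go through, and that is exactly the hypothesis we are given.
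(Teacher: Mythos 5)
Your primary argument --- defining $\Theta(k\cdot\exp_GX):=k\cdot\exp_G(-X)$ outright and then propagating multiplicativity from an identity neighbourhood --- has a genuine gap at exactly the point you flag. The openness of your set $U$ requires ``local multiplicativity of $\Theta$ near each point $g$'', but the local automorphism $\Theta_0$ only controls $\Theta$ near $\1$; a globally defined smooth involution that agrees with a local homomorphism on a neighbourhood of $\1$ need not be a homomorphism anywhere else (on the circle, any diffeomorphism equal to the identity near the base point extends the trivial local automorphism), so the propagation would have to use the specific formula for $\Theta$, and nothing in your sketch does. In effect, proving that this formula is multiplicative \emph{is} the content of the proposition, and the propagation step begs it.

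The route you describe under ``the main obstacle'' is the one the paper actually takes: pass to the universal covering $e\colon\widetilde G\to G$, integrate $\widetilde\theta=\Lie(e)^{-1}\circ\theta\circ\Lie(e)$ to an involutive $\widetilde\Theta\in\Aut(\widetilde G)$ (automatic by simple connectedness), and descend. But your sketch omits the step that makes the descent work: one must first check that $G/K$ is simply connected, which the paper does by exhibiting the explicit diffeomorphism $\tau\colon\pg\to G/K$, $X\mapsto(\exp_GX)^{-1}K$, built from $\varphi$. Only then does Lemma~\ref{LL1}(iii) give that $\widetilde K$ is connected, whence $\widetilde\Theta|_{\widetilde K}=\id_{\widetilde K}$ (since $\widetilde\theta$ fixes $\widetilde\kg$); this simultaneously shows that $\widetilde\Theta$ fixes $e^{-1}(\1)\subseteq\widetilde K$ pointwise, so that it descends to $\Theta\in\Aut(G)$, and that $K\subseteq\{g\in G\mid\Theta(g)=g\}$. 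Your argument for the reverse inclusion ($\exp_G(2X)=\1$ forces $X=0$ by injectivity of $\varphi$) and for uniqueness are fine and match the paper's. So the approach you ultimately point to is correct and is the paper's own, but as written your proof is incomplete without the simple connectedness of $G/K$ and the connectedness of $\widetilde K$ spelled out, and the first half of your write-up should be discarded rather than repaired.
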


\begin{proof}
Let $e\colon\widetilde{G}\to G$ be the universal covering 
of $G$, 
and denote 
$\kg=\Lie(K)$, 
$\pg:=\Ker(\theta+\id_{\gg})$,
$\widetilde{K};=e^{-1}(K)$, 
$\widetilde{\gg}:=\Lie(\widetilde{G})$, 
$\widetilde{\pg}:=\Lie(e)^{-1}(\pg)$, 
and $\widetilde{\theta}:=\Lie(e)^{-1}\circ\theta\circ\Lie(e)\in
\Aut(\widetilde{\gg})$. 
Then Proposition~\ref{LL2} shows that the mapping 
$\widetilde{\varphi}\colon
\widetilde{K}\times\widetilde{\pg}\to\widetilde{G}$, 
$(\widetilde{k},\widetilde{X})\mapsto 
\widetilde{k}\cdot\exp_{\widetilde{G}}\widetilde{X}$ 
is a diffeomorphism. 

On the other hand, since the group $\widetilde{G}$ is connected and 
simply connected, it follows that there exists a unique smooth homomorphism 
$\widetilde{\Theta}\colon\widetilde{G}\to\widetilde{G}$ 
such that $\Lie(\widetilde{\Theta})=\widetilde{\theta}$. 
(See for instance Remark~3.13 in \cite{Be06}.) 
Since $\widetilde{\theta}^2=\id_{\widetilde{\gg}}$, 
it then follows that $\widetilde{\Theta}^2=\id_{\widetilde{G}}$, 
and in particular $\widetilde{\Theta}\in\Aut(\widetilde{G})$. 

Now we use hypothesis~(iii) to see that the mapping 
$\tau\colon\pg\to G/K$, $X\mapsto(\exp_GX)^{-1}K$ 
is a diffeomorphism. 
In fact, it is clear that this is a smooth map, 
and its inverse is the smooth well-defined map 
$G/K\to\pg$, $gK\mapsto\pr_{\pg}(g^{-1})$, 
where $\pr_{\pg}\colon G\to\pg$ is the projection onto $\pg$ 
defined by means of the diffeomorphism 
$\varphi^{-1}\colon G\to K\times\pg$. 

Thus $\tau\colon\pg\to G/K$ is a diffeomorphism, 
and in particular $G/K$ is simply connected since $\pg$ is so. 
Then Lemma~\ref{LL1} shows that both $K$ and $\widetilde{K}$ 
are connected. 
Since $\widetilde{\theta}|_{\widetilde{\kg}}=\id_{\widetilde{\kg}}$ 
and $\widetilde{K}$ is connected, 
it follows that $\widetilde{\Theta}|_{\widetilde{K}}=\id_{\widetilde{K}}$. 
In particular, the subgroup $e^{-1}(\1)$ is invariant under 
$\widetilde{\Theta}$ (since $e^{-1}(\1)\subseteq\widetilde{K}$). 
Then there exists a unique automorphism 
$\Theta\in\Aut(G)$ such that $\Theta\circ e=e\circ\widetilde{\Theta}$. 
In addition, since $\widetilde{\Theta}^2=\widetilde{\Theta}$, 
it follows that $\Theta^2=\Theta$. 

It remains to prove that 
$K=\{g\in G\mid\Theta(g)=g\}$. 
The inclusion $\subseteq$ is clear since 
$\widetilde{\Theta}|_{\widetilde{K}}=\id_{\widetilde{K}}$ 
and $K=\widetilde{K}/e^{-1}(\1)$. 
Conversely, 
let $g\in G$ such that $\Theta(g)=g$. 
By hypothesis~(iii), 
there exist $k\in K$ and $X\in\pg$ such that $g=k\cdot\exp_GX$. 
Then 
$k\cdot\exp_GX=g=\Theta(g)=\Theta(k)\cdot\exp_G(\theta(-X))
=k\cdot\exp_G(-X)$,
so that $\exp_G(2X)=\1$. 
Thus $\varphi(\1,2X)=\varphi(\1,0)$, and then $X=0$ since 
$\varphi\colon K\times\pg\to G$ is injective. 
Consequently $g=k\in K$, and the proof ends. 
\end{proof}

We now come to a proposition that allows us to lift 
the global Iwasawa decompositions to covering groups. 

\begin{proposition}\label{lift_iw}
Let $G$ be a connected Banach-Lie group, and $K$, $A$, and $N$ 
connected Banach-Lie subgroups of $G$ such that the multiplication map
$\m\colon K\times A\times N\to G$
is a diffeomorphism. 
In addition, assume that $A$ and $N$ are simply connected 
and $AN=NA$. 

Now assume that we have a connected Banach-Lie group $\widetilde{G}$ 
with a covering homomorphism 
$e\colon\widetilde{G}\to G$, and define 
$\widetilde{K}:=e^{-1}(K)$, $\widetilde{A}:=e^{-1}(A)$, 
and $\widetilde{N}:=e^{-1}(N)$. 
Then $\widetilde{K}$, $\widetilde{A}$, and $\widetilde{N}$ 
are connected Banach-Lie subgroups of $\widetilde{G}$ and the multiplication 
map 
$\widetilde{\m}\colon\widetilde{K}\times\widetilde{A}\times\widetilde{N}\to 
\widetilde{G}$
is a diffeomorphism. 
\end{proposition}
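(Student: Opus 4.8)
The plan is to follow the pattern of the proofs of Lemma~\ref{LL1} and Proposition~\ref{LL2}, reducing everything to the two facts that $\Lie(e)$ is an isomorphism of Banach--Lie algebras and that $\m$ is a diffeomorphism. First I would apply Lemma~\ref{LL1}(i) three times, once each with the r\^ole of $K$ played by $K$, by $A$, and by $N$: this gives at once that $\widetilde{K}$, $\widetilde{A}$ and $\widetilde{N}$ are Banach--Lie subgroups of $\widetilde{G}$ and that $e$ restricts to covering homomorphisms $e|_{\widetilde{K}}\colon\widetilde{K}\to K$, $e|_{\widetilde{A}}\colon\widetilde{A}\to A$ and $e|_{\widetilde{N}}\colon\widetilde{N}\to N$.

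Next come the connectedness assertions. For $\widetilde{K}$ I would argue as in Lemma~\ref{LL1}(iii): since $\m$ is a diffeomorphism and $A$, $N$ are simply connected, the inclusion $K\hookrightarrow G$, $k\mapsto\m(k,\1,\1)$, induces an isomorphism $\pi_1(K)\to\pi_1(G)$, and the homotopy exact sequence of the fibration $K\to G\to G/K$ then shows that $G/K$ is simply connected (indeed, since $AN=NA$, the maps $A\times N\to AN$, $(a,n)\mapsto an$, and $AN\to G/K$, $h\mapsto hK$, are diffeomorphisms, so $G/K\cong A\times N$). Lemma~\ref{LL1}(iii), applied to $e\colon\widetilde{G}\to G$, then gives that $\widetilde{K}$ and $K$ are connected. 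For $\widetilde{A}$ and $\widetilde{N}$ one must use the simple connectedness of $A$ and $N$ to conclude that the covering homomorphisms $e|_{\widetilde{A}}$ and $e|_{\widetilde{N}}$ are in fact diffeomorphisms, so that these subgroups are connected and map isomorphically onto $A$ and $N$; I expect this to be the most delicate point of the argument, since a covering of a simply connected space need not have connected total space.

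Granting this, the remainder is the routine local-diffeomorphism-plus-bijection scheme. The square
$$\begin{CD}
\widetilde{K}\times\widetilde{A}\times\widetilde{N} @>{\widetilde{\m}}>> \widetilde{G}\\
@V{e|_{\widetilde{K}}\times e|_{\widetilde{A}}\times e|_{\widetilde{N}}}VV @VV{e}V\\
K\times A\times N @>{\m}>> G
\end{CD}$$
commutes, its vertical arrows are covering maps hence local diffeomorphisms, and its bottom arrow is a diffeomorphism, so $\widetilde{\m}$ is a local diffeomorphism. To see that $\widetilde{\m}$ is injective, apply $e$ to a relation $\widetilde{k}_1\widetilde{a}_1\widetilde{n}_1=\widetilde{k}_2\widetilde{a}_2\widetilde{n}_2$ and use injectivity of $\m$ to get $e(\widetilde{a}_1)=e(\widetilde{a}_2)$ and $e(\widetilde{n}_1)=e(\widetilde{n}_2)$; injectivity of $e|_{\widetilde{A}}$ and $e|_{\widetilde{N}}$ then forces $\widetilde{a}_1=\widetilde{a}_2$, $\widetilde{n}_1=\widetilde{n}_2$, whence $\widetilde{k}_1=\widetilde{k}_2$. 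To see that $\widetilde{\m}$ is surjective, given $\widetilde{g}\in\widetilde{G}$ write $e(\widetilde{g})=kan$ with $k\in K$, $a\in A$, $n\in N$, lift $a$ and $n$ to $\widetilde{a}\in\widetilde{A}$ and $\widetilde{n}\in\widetilde{N}$, and observe that $e(\widetilde{g}\,\widetilde{n}^{-1}\widetilde{a}^{-1})=k\in K$, so that $\widetilde{k}:=\widetilde{g}\,\widetilde{n}^{-1}\widetilde{a}^{-1}\in e^{-1}(K)=\widetilde{K}$ and $\widetilde{g}=\widetilde{k}\widetilde{a}\widetilde{n}$. A bijective local diffeomorphism is a diffeomorphism, which finishes the proof. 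Thus the whole difficulty is concentrated in the connectedness of $\widetilde{A}$ and $\widetilde{N}$ and the attendant injectivity of $e|_{\widetilde{A}}$ and $e|_{\widetilde{N}}$, which is exactly where the hypothesis that $A$ and $N$ be simply connected is indispensable.
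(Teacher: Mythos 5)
You follow the paper's own route essentially step for step: Lemma~\ref{LL1}(i) applied in turn to $K$, $A$, and $N$; connectedness of $\widetilde{K}$ via the identification of $G/K$ with the simply connected space $A\times N$ followed by Lemma~\ref{LL1}(ii)--(iii) (the paper does exactly your parenthetical, through the homeomorphism $B=AN\to G/K$, $b\mapsto b^{-1}K$); the commutative square showing that $\widetilde{\m}$ is a local diffeomorphism; surjectivity by lifting $a$ and $n$ and absorbing the resulting element of $e^{-1}(\1)$ into the $\widetilde{K}$-factor; and injectivity reduced to the injectivity of $e|_{\widetilde{A}}$ and $e|_{\widetilde{N}}$. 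Your cancellation argument for injectivity is in fact a little more direct than the paper's, which forms the subgroup $\widetilde{B}=\widetilde{A}\widetilde{N}$ (this is where the hypothesis $AN=NA$ enters) and checks $\widetilde{K}\cap\widetilde{B}=\{\1\}$.

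The step you leave open --- that $e|_{\widetilde{A}}$ and $e|_{\widetilde{N}}$ are diffeomorphisms --- is exactly the step the paper settles by citing Theorem~C.18 of \cite{Be06}, and your worry about it is well founded: that result applies to a covering with \emph{connected} total space, whereas $\widetilde{A}=e^{-1}(A)$ contains the whole kernel $Z:=e^{-1}(\1)$, which meets the identity component of $e^{-1}(A)$ trivially (that component is an open subgroup covering $A$, hence maps isomorphically onto the simply connected group $A$). Consequently $e^{-1}(A)$ is disconnected as soon as $Z\ne\{\1\}$; for the universal cover of ${\rm SL}(2,{\mathbb R})$ with the standard Iwasawa subgroups one gets $e^{-1}(A)\simeq{\mathbb Z}\times{\mathbb R}$. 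So neither your sketch nor the paper's proof closes this point as literally stated; the statement should be read with $\widetilde{A}$ and $\widetilde{N}$ taken to be the identity components of $e^{-1}(A)$ and $e^{-1}(N)$ (equivalently, the connected subgroups integrating $\Lie(e)^{-1}(\Lie(A))$ and $\Lie(e)^{-1}(\Lie(N))$), while $\widetilde{K}=e^{-1}(K)$ is kept in full. With that reading $e|_{\widetilde{A}}$ and $e|_{\widetilde{N}}$ are genuinely isomorphisms and both your argument and the paper's go through verbatim: in the surjectivity step one lifts $a$ and $n$ into these identity components and pushes the remaining ambiguity, which lies in $Z\subseteq\widetilde{K}$, into the $\widetilde{K}$-factor, exactly as you do.
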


\begin{proof}
It follows by Lemma~\ref{LL1}(i)
that $\widetilde{K}$, $\widetilde{A}$, and $\widetilde{N}$ 
are Banach-Lie subgroups of $G$ and 
$e|_{\widetilde{X}}\colon\widetilde{X}\to X$ 
is a covering map when $X$ is either of the groups $K$, $A$, $N$ or $G$. 
Since the groups $A$ and $N$ are simply connected, 
it then follows 
that the map $e|_{\widetilde{X}}\colon\widetilde{X}\to X$ 
is actually a diffeomorphism if $X=A$ or $X=N$ 
(see for instance Theorem~C.18 in \cite{Be06}). 
In particular, the groups $\widetilde{A}$ and $\widetilde{N}$ 
are connected and simply connected. 

To prove that $\widetilde{K}$ is connected as well, we 
first show that $G/K$ is homeomorphic to the simply connected 
group $B:=AN\simeq A\times N$, where the diffeomorphism 
$A\times N\to B$ is defined by the multiplication in $G$, 
as an easy consequence of the assumption. 
Now let $\pr_K\colon G\to K$ and $\pr_B\colon G\to B$ 
be the smooth projections given by the inverse of the diffeomorphism 
$K\times B\to G$. 
Then the continuous map 
$\tau\colon B\to G/K$, $b\mapsto b^{-1}K$ 
is bijective and its inverse is also continuous 
since it is given by 
$\tau^{-1}\colon G/K\to B$, $gK\mapsto\pr_B(g^{-1})$. 
Thus $\tau\colon B\to G/K$ is a homeomorphism. 
By taking into account the homeomorphism 
$\psi\colon\widetilde{G}/\widetilde{K}\to G/K$ 
provided by Lemma~\ref{LL1}(ii), 
it then follows that the quotient 
$\widetilde{G}/\widetilde{K}$ is simply connected, 
since $B$ ($\simeq A\times N$) is simply connected. 
Then Lemma~\ref{LL1}(iii) shows that $\widetilde{K}$ is connected.

We now prove the assertion regarding the multiplication map 
$\widetilde{\m}\colon\widetilde{K}\times\widetilde{A}\times\widetilde{N}\to 
\widetilde{G}$. 
Using locally defined inverses of the covering map 
$e\colon\widetilde{G}\to G$, 
we see that $\widetilde{\m}$ is a local diffeomorphism 
at every point of $\widetilde{K}\times\widetilde{A}\times\widetilde{N}$. 
It remains to prove that $\widetilde{\m}$ is bijective. 
To prove that $\widetilde{\m}$ is surjective, 
let $\widetilde{g}\in\widetilde{G}$ 
arbitrary. 
Since $m\colon K\times A\times N\to G$ is surjective, 
there exist $k\in K$, $a\in A$, and $n\in N$ such that 
$e(\widetilde{g})=kan$. 
Denote $\widetilde{a}:=(e|_{\widetilde{A}})^{-1}(a)\in A$ and 
$\widetilde{n}:=(e|_{\widetilde{N}})^{-1}(n)\in N$, 
and take $\widetilde{k}_0\in e^{-1}(k)$ arbitrary, so that 
$e(\widetilde{k}_0\widetilde{a}\widetilde{n})=kan=e(\widetilde{g})$. 
Denoting 
$\widetilde{z}:=\widetilde{k}_0\widetilde{a}\widetilde{n}\widetilde{g}^{-1}$, 
we have $e(\widetilde{z})=1$, 
so $\widetilde{z}\in e^{-1}(\1)\subseteq e^{-1}(K)=\widetilde{K}$. 
Then $\widetilde{k}:=\widetilde{z}^{-1}\widetilde{k}_0\in\widetilde{K}$ 
and $\widetilde{g}=\widetilde{k}\widetilde{a}\widetilde{n}
\in\widetilde{K}\widetilde{A}\widetilde{N}$. 
Since $\widetilde{g}\in\widetilde{G}$ is arbitrary, 
it follows that 
$\widetilde{\m}\colon\widetilde{K}\times\widetilde{A}\times\widetilde{N}\to 
\widetilde{G}$ is surjective. 

To prove that $\widetilde{\m}$ is injective, 
first recall that $e|_{\widetilde{X}}\colon\widetilde{X}\to X$ 
is a bijective homomorphism if $X=A$ or $X=N$. 
Then the hypothesis that $AN=NA$ implies that 
$\widetilde{B}:=\widetilde{A}\widetilde{N}=\widetilde{N}\widetilde{A}$ 
is a subgroup of $\widetilde{G}$, and the multiplication map 
$\widetilde{A}\times\widetilde{N}\to\widetilde{B}$, 
$(\widetilde{a},\widetilde{n})\to\widetilde{a}\widetilde{n}$ 
is a bijection. 
Now assume that $\widetilde{k}_j\in\widetilde{K}$, 
$\widetilde{a}_j\in\widetilde{A}$, and $\widetilde{n}_j\in\widetilde{N}$ 
for $j=1,2$, and 
$\widetilde{k}_1\widetilde{a}_1\widetilde{n}_1
=\widetilde{k}_2\widetilde{a}_2\widetilde{n}_2$. 
Then  
$\widetilde{k}_2^{-1}\widetilde{k}_1^{-1}
=\widetilde{a}_2\widetilde{n}_2(\widetilde{a}_1\widetilde{n}_1)^{-1}
\in\widetilde{K}\cap\widetilde{B}$, 
so that it will be enough to check that 
$\widetilde{K}\cap\widetilde{B}=\{\1\}$, 
that is, $\widetilde{K}\cap\widetilde{A}\widetilde{N}=\{\1\}$. 
In order to prove the latter equality, 
let $\widetilde{x}\in\widetilde{K}\cap\widetilde{A}\widetilde{N}$ arbitrary. 
Then $e(\widetilde{x})\in K\cap AN=\{\1\}$. 
On the other hand, we have $\widetilde{x}=\widetilde{a}\widetilde{n}$ 
for some $\widetilde{a}\in\widetilde{A}$ and $\widetilde{n}\in\widetilde{N}$, 
and $e(\widetilde{a})e(\widetilde{n})=\1$. 
Since $A\cap N=\{\1\}$, it follows that 
$e(\widetilde{a})=e(\widetilde{n})=\1$. 
Using the fact that $e|_{\widetilde{X}}\colon\widetilde{X}\to X$ 
is a bijective homomorphism if $X=A$ or $X=N$, 
it then follows that $\widetilde{a}=\widetilde{n}=\1$, 
whence $\widetilde{x}=\widetilde{a}\widetilde{n}=\1$. 
Thus $\widetilde{K}\cap\widetilde{A}\widetilde{N}=\{\1\}$, 
and this completes the proof of the fact that the multiplication map 
$\widetilde{\m}\colon\widetilde{K}\times\widetilde{A}\times\widetilde{N}\to 
\widetilde{G}$ 
is bijective.  
\end{proof}

\section{What a reductive Banach-Lie group could be}\label{Sect5}

\noindent\textbf{Reductivity relative to a symmetric norming function.}
This notion is suggested by 
Remarks \ref{red_alg}~and~\ref{red_gr}. 

\begin{definition}\label{Phi_red}
\normalfont
Let $\Phi$ be a symmetric norming function. 
By \emph{$\Phi$-reductive Lie algebra} 
we mean any closed real Lie subalgebra of $\Sg_\Phi(\Hc)$ 
satisfying the following conditions: 
\begin{itemize}
\item[{\rm(i)}] for every $X\in\gg$ we have $X^*\in\gg$; 
\item[{\rm(ii)}] the set $\gg\cap\Fg$ of finite-rank operators in $\gg$ 
is dense in $\gg$ with respect to the norm $\Vert\cdot\Vert_\Phi$. 
Thus $\gg\subseteq\Sg_\Phi^{(0)}$ actually. 
\end{itemize}
In this case the connected Banach-Lie group $G$ 
($\subseteq \1+\Sg_\Phi^{(0)}(\Hc)$) 
corresponding 
to the Lie subalgebra $\gg$ of $\Sg_\Phi^{(0)}(\Hc)$ is said to be a 
\emph{$\Phi$-reductive linear Banach-Lie group}. 
By \emph{$\Phi$-reductive Banach-Lie group} we shall mean any covering group 
of a $\Phi$-reductive linear Lie group.

In the above setting, the closure of $\gg\cap\Fg$ with respect to 
the Hilbert-Schmidt norm $\Vert\cdot\Vert_2$ will be called 
the \emph{$L^*$-algebra associated with $\gg$} and will be denoted by~$\gg_2$.
\qed
\end{definition}

\begin{example}\label{classical} 
\normalfont
(See \cite{dlH72} and also Definitions 3.2~and~3.2 in \cite{Be07}.) 
Let $\Phi$ be any symmetric norming function and 
denote by $\GL(\Hc)$ the group of all invertible bounded linear operators 
on the complex Hilbert space $\Hc$.  
The \emph{classical complex Banach-Lie groups and  
Banach-Lie algebras} associated with $\Phi$ are defined as follows:   
\begin{itemize}
\item[{\rm(A)}]%\quad
$\GL_\Phi(\Hc)=\GL(\Hc)\cap(\1+\Sg_\Phi^{(0)}(\Hc))$ 
with the Lie algebra 
$\gl_\Phi(\Hc)=\Sg_\Phi^{(0)}(\Hc)$;
\item[{\rm(B)}]%\quad
$\OO_\Phi(\Hc)=\{g\in\GL_\Phi(\Hc)\mid g^{-1}=Jg^*J^{-1}\}$ 
(where  $J$ is a \emph{conjugation}, i.e., 
an antilinear isometry with $J^2=\1$)
with the Lie algebra 
$\og_\Phi(\Hc)=\{x\in\Sg_\Phi^{(0)}(\Hc)\mid x=-Jx^*J^{-1}\}$; 
\item[{\rm(C)}]%\quad
$\Sp_\Phi(\Hc)=\{g\in\GL_\Phi(\Hc)\mid 
 g^{-1}=\widetilde{J}g^*\widetilde{J}^{-1}\}$ 
(where $\widetilde{J}$ an \emph{anti-conjugation}, 
i.e., an antilinear isometry with 
$\widetilde{J}^2=-\1$), 
with the Lie algebra 
$\sp_\Phi(\Hc)=\{x\in\Sg_\Phi^{(0)}(\Hc)\mid 
x=-\widetilde{J}x^*\widetilde{J}^{-1}\}$. 
\end{itemize}

The \emph{classical real Banach-Lie groups and Banach-Lie algebras}   
associated with the symmetric norming function $\Phi$ are
the following: 
\begin{itemize}
\item[{\rm(AI)}]%\quad
$\GL_\Phi(\Hc;{\mathbb R})=\{g\in\GL_\Phi(\Hc)\mid gJ=Jg\}$ 
with the Lie algebra 
$\gl_\Phi(\Hc;{\mathbb R})
=\{x\in\Sg_\Phi^{(0)}(\Hc)\mid xJ=Jx\}$,  
where $J\colon\Hc\to\Hc$ is a conjugation; 
\item[{\rm(AII)}]%\quad
$\GL_\Phi(\Hc;{\mathbb H})=\{g\in\GL_\Phi(\Hc)\mid 
 g\widetilde{J}=\widetilde{J}g\}$ 
with the Lie algebra 
$\gl_\Phi(\Hc;{\mathbb H})
=\{x\in\Sg_\Phi^{(0)}(\Hc)\mid x\widetilde{J}=\widetilde{J}x\}$,  
where $\widetilde{J}\colon\Hc\to\Hc$ is an anti-conjugation;
\item[{\rm(AIII)}]%\quad\  
$\U_\Phi(\Hc_{+},\Hc_{-})=\{g\in\GL_\Phi(\Hc)\mid g^*Vg=V\}$ 
with the Lie algebra 
$\ug_\Phi(\Hc_{+},\Hc_{-})
=\{x\in\Sg_\Phi^{(0)}(\Hc)\mid x^*V=-Vx\}$, 
where $\Hc=\Hc_{+}\oplus\Hc_{-}$ and 
$V=\begin{pmatrix} \hfill 1 & \hfill 0 \\ \hfill 0 & \hfill -1\end{pmatrix}$ 
with respect to 
this orthogonal decomposition of $\Hc$; 
\item[{\rm(BI)}]%\quad
$\OO_\Phi(\Hc_{+},\Hc_{-})=\{g\in\GL_\Phi(\Hc)\mid g^{-1}=Jg^*J^{-1} 
\text{ and }g^*Vg=V\}$  
with %the Lie algebra 
$\og_\Phi(\Hc_{+},\Hc_{-})
=\{x\in\Sg_\Phi^{(0)}(\Hc)\mid x=-Jx^*J^{-1}\text{ and }x^*V=-Vx\}$, 
where $\Hc=\Hc_{+}\oplus\Hc_{-}$, 
$V=\begin{pmatrix} \hfill 1 & \hfill 0 \\ \hfill 0 & \hfill -1\end{pmatrix}$ 
with respect to 
this decomposition of $\Hc$, 
and $J\colon\Hc\to\Hc$ is a conjugation 
such that $J(\Hc_{\pm})\subseteq\Hc_{\pm}$; 
\item[{\rm(BII)}]%\quad
$\OO^*_\Phi(\Hc)=\{g\in\GL_\Phi(\Hc)\mid g^{-1}=Jg^*J^{-1} 
\text{ and }g\widetilde{J}=\widetilde{J}g\}$ 
with the Lie algebra 
$\og^*_\Phi(\Hc)
=\{x\in\Sg_\Phi^{(0)}(\Hc)\mid 
x=-Jx^*J^{-1}\text{ and }x\widetilde{J}=\widetilde{J}x\}$, 
where  $J\colon\Hc\to\Hc$ is a conjugation and 
$\widetilde{J}\colon\Hc\to\Hc$ is an anti-conjugation
such that $J\widetilde{J}=\widetilde{J}J$; 
\item[{\rm(CI)}]%\quad
$\Sp_\Phi(\Hc;{\mathbb R})=\{g\in\GL_\Phi(\Hc)\mid 
g^{-1}=\widetilde{J}g^*\widetilde{J}^{-1}\text{ and }gJ=Jg\}$ 
with the Lie algebra 
$\sp_\Phi(\Hc;{\mathbb R})=\{x\in\Sg_\Phi^{(0)}(\Hc)\mid 
-x=\widetilde{J}x^*\widetilde{J}^{-1}\text{ and }xJ=Jx\}$, 
where $\widetilde{J}\colon\Hc\to\Hc$ is any anti-conjugation and  
$J\colon\Hc\to\Hc$ is any conjugation such that 
$J\widetilde{J}=\widetilde{J}J$; 
\item[{\rm(CII)}]%\quad 
$\Sp_\Phi(\Hc_{+},\Hc_{-})=\{g\in\GL_\Phi(\Hc)\mid 
g^{-1}=\widetilde{J}g^*\widetilde{J}^{-1}\text{ and }g^*Vg=V\}$ 
with %the Lie algebra 
$\sp_\Phi(\Hc_{+},\Hc_{-})
=\{x\in\Sg_\Phi^{(0)}(\Hc)\mid 
x=-\widetilde{J}x^*\widetilde{J}^{-1}\text{ and }x^*V=-Vx\}$, 
where $\Hc=\Hc_{+}\oplus\Hc_{-}$, 
$V=\begin{pmatrix} \hfill 1 & \hfill 0 \\ \hfill 0 & \hfill -1\end{pmatrix}$ 
with respect to 
this decomposition of $\Hc$, 
and $\widetilde{J}\colon\Hc\to\Hc$ is an anti-conjugation
such that $\widetilde{J}(\Hc_{\pm})\subseteq\Hc_{\pm}$.
\end{itemize}
As a by-product of 
the classification of the $L^*$-algebras 
(see for instance Theorems 7.18~and~7.19 in \cite{Be06}), 
every (real or complex) topologically simple $L^*$-algebra 
is isomorphic to one of
the classical Banach-Lie algebras associated with 
the Hilbert-Schmidt ideal $\Sg_2(\Hc)$. 

We refer to \cite{dlH72} and \cite{Ne02b} 
for information on the homotopy groups of the classical Banach-Lie groups 
associated with the Schatten ideals. 
The corresponding description 
of homotopy groups actually holds true 
for the classical Banach-Lie groups associated with 
any symmetric norming function $\Phi$. 
It is is clear that \emph{the connected $\1$-component 
of any classical Banach-Lie group associated with $\Phi$ is 
a $\Phi$-reductive linear Banach-Lie group}.  
\qed
\end{example}

It would be interesting to understand 
how the classical Lie algebras 
associated with an operator ideal 
fit in the framework of $\Phi$-reductive Lie algebras. 
Here is a proposition in this connection. 
Recall that a subset 
$A\subseteq\Bc(\Hc)$ is \emph{irreducible} 
if $\{0\}$ and $\Hc$ are the only closed linear subspaces 
of $\Hc$ which are invariant under all operators in $A$. 

\begin{proposition}\label{class}
Let $\Phi$ be a symmetric norming function,   
$\gg$ a $\Phi$-reductive Lie algebra, 
and $\gg_2$ the $L^*$-algebra associated with $\gg$. 
Then the Lie algebra $\gg$ is irreducible 
if and only if $\gg_2$ is irreducible.  
In addition, if $\Sg_\Phi\ne\Sg_1$, 
then $\gg$ is  one of 
the classical (real or complex) Banach-Lie algebras associated with 
the norm ideal $\Sg_\Phi$ 
if and only if $\gg_2$ is  one of 
the classical (real or complex) Banach-Lie algebras associated with 
the Hilbert-Schmidt ideal $\Sg_2$.  
If this is the case, then 
 $\gg$ and $\gg_2$ are classical Lie algebras of the same type.  
\end{proposition}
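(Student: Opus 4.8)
The plan is to reduce everything to the single fact that $\gg$ and $\gg_2$ have the same finite-rank part, $\gg\cap\Fg=\gg_2\cap\Fg$, and then to observe that both ``irreducibility'' and ``being a classical Lie algebra of type $T$'' are detected on that common finite-rank part. For the irreducibility statement I would argue directly: by Definition~\ref{Phi_red} the subalgebra $\ag:=\gg\cap\Fg$ is $\Vert\cdot\Vert_\Phi$-dense in $\gg$ and $\Vert\cdot\Vert_2$-dense in $\gg_2$, hence operator-norm dense in both, since $\Vert\cdot\Vert\le\Vert\cdot\Vert_\Phi$ and $\Vert\cdot\Vert\le\Vert\cdot\Vert_2$. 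A closed linear subspace $M\subseteq\Hc$ is invariant under a family of operators exactly when it is invariant under an operator-norm-dense subfamily (if $x_n\to x$ in $\Vert\cdot\Vert$ and $x_nM\subseteq M$ then $xM\subseteq\overline M=M$), so $\gg$, $\ag$ and $\gg_2$ have exactly the same invariant closed subspaces; thus $\gg$ is irreducible if and only if $\gg_2$ is.

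For the classicality statement, I would first record that for each type $T$ occurring in Example~\ref{classical} the defining relation $\mathcal C_T$ (one of $x=-Jx^*J^{-1}$, $x=-\widetilde Jx^*\widetilde J^{-1}$, $x^*V=-Vx$, $xJ=Jx$, $x\widetilde J=\widetilde Jx$, or a conjunction of these) is a real-linear, operator-norm-closed condition not involving the norm ideal; write $\mathfrak f_T:=\{x\in\Fg\mid\mathcal C_T\}$. If $\gg$ equals the classical Lie algebra of type $T$ associated with $\Sg_\Phi$, then obviously $\gg\cap\Fg=\mathfrak f_T$. Conversely, fixing an orthonormal basis of $\Hc$ adapted simultaneously to $J$, $\widetilde J$ and the splitting $\Hc=\Hc_+\oplus\Hc_-$, and letting $\{P_n\}_{n\ge1}$ be the associated increasing sequence of finite-rank projections with $P_n\to\1$ strongly and $P_nJ=JP_n$, $P_n\widetilde J=\widetilde JP_n$, $P_nV=VP_n$, one checks that $P_nxP_n\to x$ in $\Vert\cdot\Vert_\Phi$ for every $x\in\Sg_\Phi^{(0)}$ (approximate $x$ by a finite-rank operator and use $\Vert P_nxP_n\Vert_\Phi\le\Vert x\Vert_\Phi$) and that $P_nxP_n\in\mathfrak f_T$ whenever $x$ satisfies $\mathcal C_T$; hence $\mathfrak f_T$ is $\Vert\cdot\Vert_\Phi$-dense in the classical algebra $\{x\in\Sg_\Phi^{(0)}\mid\mathcal C_T\}$. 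It follows that $\gg$ is classical of type $T$ over $\Sg_\Phi$ if and only if $\gg\cap\Fg=\mathfrak f_T$, and, by the same argument applied to $\Sg_2$, that $\gg_2$ is classical of type $T$ over $\Sg_2$ if and only if $\gg_2\cap\Fg=\mathfrak f_T$. Since the $\mathfrak f_T$ for distinct types are pairwise distinct, the proposition follows once $\gg\cap\Fg=\gg_2\cap\Fg$ is established under the hypothesis $\Sg_\Phi\ne\Sg_1$.

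For that key identity the inclusion $\gg\cap\Fg\subseteq\gg_2\cap\Fg$ is trivial; for the reverse, put $\bg:=\gg\cap\Fg$. This is a $*$-invariant locally finite Lie algebra of finite-rank operators which is $\Vert\cdot\Vert_2$-dense in $\gg_2$ and which, being equal to $\gg\cap\Fg$ with $\gg=\overline{\bg}^{\Vert\cdot\Vert_\Phi}$ closed, is $\Vert\cdot\Vert_\Phi$-closed inside $\Fg$. By the classification of separable $L^*$-algebras (see \cite{Be06}), $\gg_2$ is an $\ell^2$-orthogonal direct sum of its abelian center and topologically simple ideals, each of the latter being one of the classical $L^*$-algebras; correspondingly $\gg_2\cap\Fg$ is a sum of a diagonal finitary abelian algebra and the simple finitary Lie algebras $\mathfrak f_\iota$, and for each of these summands the derived algebra has real codimension $0$, $1$ or $2$, the quotient (when nonzero) being realized by a trace-type functional. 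Here is where $\Sg_\Phi\ne\Sg_1$ enters: by the duality $(\Sg_\Phi^{(0)})^*\cong\Sg_{\Phi^*}$ of Remark~\ref{ideals4}, the functional $\Tr$ extends to a $\Vert\cdot\Vert_\Phi$-continuous functional on $\Sg_\Phi^{(0)}$ if and only if $\1\in\Sg_{\Phi^*}$, i.e.\ iff $\Phi$ is equivalent to $\Phi_1$, i.e.\ iff $\Sg_\Phi=\Sg_1$; so when $\Sg_\Phi\ne\Sg_1$ the $\Vert\cdot\Vert_\Phi$-closure in $\Sg_\Phi^{(0)}$ of the kernel of any trace-type functional on $\Fg$ is the whole of $\Sg_\Phi^{(0)}$. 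Combining this with the fact that a $\Vert\cdot\Vert_2$-dense $*$-subalgebra of $\gg_2\cap\Fg$ must contain each derived algebra $[\mathfrak f_\iota,\mathfrak f_\iota]$, one concludes that the $\Vert\cdot\Vert_\Phi$-closed (in $\Fg$), $\Vert\cdot\Vert_2$-dense $*$-subalgebra $\bg$ of $\gg_2\cap\Fg$ cannot omit a trace direction either, whence $\bg=\gg_2\cap\Fg$.

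The step I expect to be the main obstacle is this last one, and within it two points: first, proving that a $\Vert\cdot\Vert_2$-dense $*$-subalgebra of the finitary part of a topologically simple $L^*$-algebra already contains the full derived finitary algebra (this is where the local finiteness of $\bg$ and the structure theory of simple finitary $*$-Lie algebras are needed); and second, the block-by-block verification that the failure of $\Vert\cdot\Vert_\Phi$-continuity of the trace forces $\bg$ to contain the remaining at-most-two-dimensional abelian complement. The latter is precisely where $\Sg_\Phi\ne\Sg_1$ is indispensable: for $\Sg_\Phi=\Sg_1$ the algebra $\gg=\{x\in\Sg_1^{(0)}\mid\Tr x=0\}$ is $\Phi_1$-reductive and has $\gg_2$ classical of type $\gl$, while $\gg$ itself is not one of the classical Lie algebras.
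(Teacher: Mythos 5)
Your handling of the irreducibility equivalence is correct and is essentially the paper's argument: both reduce the question to the common $\Vert\cdot\Vert_\Phi$- and $\Vert\cdot\Vert_2$-dense finitary part $\gg\cap\Fg$ (the paper phrases this through commutants, $[T,\gg]=\{0\}\iff[T,\gg\cap\Fg]=\{0\}\iff[T,\gg_2]=\{0\}$, rather than invariant subspaces). The problem is the second half. Your entire strategy funnels through the claim that $\gg\cap\Fg=\gg_2\cap\Fg$ whenever $\Sg_\Phi\ne\Sg_1$, and that claim is false. Take $\Phi=\Phi_{3/2}$, fix an orthonormal basis, choose a real sequence $(a_j)_{j\ge1}\in\ell^3\setminus\ell^2$ (say $a_j=j^{-5/12}$), and let $\gg$ be the set of real diagonal operators $x\in\Sg_{3/2}$ with $\sum_j a_jx_{jj}=0$. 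This is an abelian, $*$-invariant, $\Vert\cdot\Vert_{3/2}$-closed Lie algebra whose finite-rank part is $\Vert\cdot\Vert_{3/2}$-dense in it, hence $\Phi$-reductive; the constraint functional is $\Vert\cdot\Vert_{3/2}$-continuous because $(a_j)$ defines an element of $\Sg_3=(\Sg_{3/2})^*$, but it is $\Vert\cdot\Vert_2$-discontinuous, so $\gg_2$ is the full algebra of real diagonal Hilbert--Schmidt operators and $\gg_2\cap\Fg$ strictly contains $\gg\cap\Fg$. This does not contradict the proposition (neither algebra is classical), but it does invalidate your reduction: equality of the finitary parts is not a consequence of $\Sg_\Phi\ne\Sg_1$ alone, and your structural argument, which runs through the center of $\gg_2$ as well as its simple ideals, breaks precisely on the abelian part.

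Even in the case you actually need ($\gg_2$ classical), the lemma you lean on --- that a $\Vert\cdot\Vert_2$-dense Lie $*$-subalgebra of the finitary part of a topologically simple $L^*$-algebra contains the full derived finitary algebra --- is false as stated: $\bigcup_n M_n({\mathbb C})\subseteq\Fg$ is a $\Vert\cdot\Vert_2$-dense Lie $*$-subalgebra of $\Sg_2$ that fails to contain $[\Fg,\Fg]$, since a traceless rank-one operator with infinitely many nonzero matrix entries lies in no $M_n$. Any repair must exploit the relative $\Vert\cdot\Vert_\Phi$-closedness of $\gg\cap\Fg$ in $\Fg$, and at that point one is effectively redoing the paper's proof. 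The paper argues differently and more economically: set $\widetilde{\gg}:=\overline{\gg_2\cap\Fg}^{\Vert\cdot\Vert_\Phi}$, so that $\gg\subseteq\widetilde{\gg}$; prove $[\gg\cap\Fg,\gg_2\cap\Fg]\subseteq\gg\cap\Fg$ by approximating $Y\in\gg_2\cap\Fg$ in $\Vert\cdot\Vert_2$ by $X_j\in\gg\cap\Fg$ and using that $\Vert\cdot\Vert_2$-convergence with uniformly bounded ranks upgrades to $\Vert\cdot\Vert_\Phi$-convergence (Lemma~4.3 in \cite{BR05}); conclude that $\gg$ is a nonzero closed ideal of $\widetilde{\gg}$; and finally observe that when $\gg_2$ is classical, $\widetilde{\gg}$ is the corresponding classical algebra over $\Sg_\Phi$, which for $\Sg_\Phi\ne\Sg_1$ has no nontrivial closed ideals (\cite{dlH72}), whence $\gg=\widetilde{\gg}$. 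I recommend abandoning the finitary-classification route in favor of this ideal-plus-topological-simplicity argument.
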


\begin{proof}
Both algebras $\gg$ and $\gg_2$ are closed under taking the adjoints, 
hence any of them is irreducible if and only if every operator 
that commutes with that algebra is a scalar multiple of 
the identity operator on $\Hc$. 
Therefore it will be enough to prove the following assertion: 
\begin{equation}\label{*}
(\forall T\in\Bc(\Hc))\qquad 
[T,\gg]=\{0\}\iff[T,\gg_2]=\{0\}. 
\end{equation}
In fact, if $[T,\gg]=\{0\}$ then in particular 
$[T,\gg\cap\Fg]=\{0\}$. 
Since the set of finite-rank operators $\gg\cap\Fg$ is dense in $\gg_2$, 
it follows that $[T,\gg_2]=\{0\}$. 
Conversely, if the operator $T$ satisfies 
the latter condition, then we have in particular $[T,\gg\cap\Fg]=\{0\}$. 
By property~(ii) of a $\Phi$-reductive Lie algebra, 
it then follows that $[T,\gg]=\{0\}$.

Now assume $\Sg_\Phi\ne\Sg_1$ and let $\widetilde{\gg}$ be 
the closure of $\gg_2\cap\Fg$ 
with respect to the norm $\Vert\cdot\Vert_\Phi$.
Since $\gg\cap\Fg\subseteq\gg_2\cap\Fg$ 
and $\gg\cap\Fg$ is dense in $\gg$, it then follows that 
$\gg\subseteq\widetilde{\gg}$.  

We are going to prove that $\gg$ is actually a closed ideal 
of the Lie algebra $\widetilde{\gg}$. 
For this purpose it will be enough to show 
that $[\gg\cap\Fg,\gg_2\cap\Fg]\subseteq\gg\cap\Fg$. 
In fact, let $X\in\gg\cap\Fg$ and $Y\in\gg_2\cap\Fg$  arbitrary. 
According to the definition of $\gg_2$, 
there exists a sequence $\{X_j\}_{j\ge1}$ 
in $\gg\cap\Fg$ such that $\Vert X_j-Y\Vert_2\to0$ as $j\to\infty$. 
Then $\Vert [X,X_j]-[X,Y]\Vert_2\to0$ as $j\to\infty$. 
On the other hand, the commutators $\{[X,X_j]\}_{j\ge1}$ 
have the ranks uniformly bounded by $\rank X$, 
hence Lemma~4.3 in \cite{BR05} implies that 
in the topology of $\Vert\cdot\Vert_\Phi$ 
we also have $[X,X_j]\to[X,Y]$ as $j\to\infty$, 
whence $[X,Y]\in\gg\cap\Fg$. 

Now it is easy to see that the assertion holds. 
In fact, if $\gg_2$ is one of 
the classical (real or complex) Banach-Lie algebras associated with 
the Hilbert-Schmidt ideal $\Sg_2$, 
then $\widetilde{\gg}$ is the similar classical Banach-Lie algebra 
associated with the norm ideal $\Sg_\Phi$. 
Since $\Sg_\Phi\ne\Sg_1$, 
it follows as in Proposition~8 on page~92 in \cite{dlH72} 
that $\widetilde{\gg}$ has no non-trivial closed ideals, 
whence $\gg=\widetilde{\gg}$. 
Conversely, 
if $\gg$ is  one of 
the classical (real or complex) Banach-Lie algebras associated with 
the norm ideal $\Sg_\Phi$, then it is obvious 
that the $L^*$-algebra associated with $\gg$ is one of 
the classical $L^*$-algebras. 
\end{proof}

\noindent\textbf{Group decompositions.}
The following theorem supplies a Cartan decomposition 
for some $\Phi$-reductive Banach-Lie groups. 
In this connection, we refer to \cite{Ne02c} 
for a systematic investigation of polar decompositions 
in infinite dimensions
(that is, Cartan decompositions for linear groups). 

\begin{theorem}\label{cartan}
Let $\Phi$ be a symmetric norming function and   
$G$ a $\Phi$-reductive Banach-Lie group 
whose Lie algebra $\gg\subseteq\Sg_\Phi(\Hc)$ 
is one of the classical Lie algebras associated with $\Phi$. 
Denote $\kg=\{X\in\gg\mid X^*=-X\}$, 
$\pg=\{X\in\gg\mid X^*=X\}$, 
and let $K$ be the connected Banach-Lie group 
corresponding to the Lie subalgebra $\kg$ of $\gg$. 
Then $K$ is a Banach-Lie subgroup of $G$ 
and the mapping 
$\varphi\colon K\times\pg\to G$, $(k,X)\mapsto k\exp_GX$, 
is a diffeomorphism.  
In addition, there exists a unique automorphism 
$\Theta\in\Aut(G)$ such that 
$\Lie(\Theta)X=-X^*$ for all $X\in\gg$ and 
$K=\{g\in G\mid\Theta(g)=g\}$. 
\end{theorem}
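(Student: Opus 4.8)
First I would establish the statement for the $\Phi$-reductive \emph{linear} Banach-Lie group $G_0\subseteq\1+\Sg_\Phi^{(0)}(\Hc)$ of which $G$ is a covering group, and then transfer the conclusions to $G$ by Propositions~\ref{LL2} and~\ref{LL3}. The basic object throughout is the map $g\mapsto(g^*)^{-1}$ on $\GL_\Phi(\Hc)$: since $\Sg_\Phi^{(0)}(\Hc)$ is a $*$-invariant ideal this is a smooth involutive automorphism with differential $X\mapsto-X^*$ at~$\1$, and its group of fixed points is $\GL_\Phi(\Hc)\cap\U(\Hc)$. Because $\gg$ is one of the classical Lie algebras of Example~\ref{classical}, $G_0$ is the identity component of the corresponding classical group; in particular a routine inspection of the defining relations shows that $G_0$ is invariant under $g\mapsto g^*$, so that for $g\in G_0$ the positive invertible operator $a:=g^*g$ satisfies the same relations.

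The core of the proof is the linear case. Holomorphic functional calculus in the unital Banach algebra $\mathbb{C}\1\oplus\Sg_\Phi^{(0)}(\Hc)$ (or, alternatively, the polar decomposition of $\GL_\Phi(\Hc)$, cf.~\cite{Ne02c}) shows that $\log$ is a diffeomorphism from the invertible positive elements of $\1+\Sg_\Phi^{(0)}(\Hc)$ onto the self-adjoint elements of $\Sg_\Phi^{(0)}(\Hc)$, with inverse $\exp$. The key point is that $\log(g^*g)\in\gg$ for every $g\in G_0$: the defining relation of each classical group forces $a=g^*g$ either to commute with the relevant (anti)conjugation $J$ or $\widetilde J$, or to satisfy $SaS^{-1}=a^{-1}$ for the relevant symmetry $S\in\{J,\widetilde J,V\}$ (for some types, both at once); since $\log$ is real-valued near the positive spectrum of $a$ and can be approximated there uniformly by polynomials with real coefficients, one gets $S\,\log(a)\,S^{-1}=\log(SaS^{-1})$ (resp. $S\,\log(a)=\log(a)\,S$), whence $\log a$, which is self-adjoint and lies in $\Sg_\Phi^{(0)}(\Hc)$, satisfies exactly the defining linear relations of $\gg$. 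Writing $g^*g=\exp(2X)$ we then obtain $X\in\gg$ with $X^*=X$, hence $X\in\pg$, and $u:=g\exp(-X)$ is unitary and lies in $G_0$; moreover $g\mapsto g\exp\!\bigl(-\tfrac12\log(g^*g)\bigr)$ is a continuous map of the connected group $G_0$ into the Banach-Lie subgroup $G_0\cap\U(\Hc)$ sending $\1$ to $\1$, so $u$ lies in its identity component, which is the connected subgroup $K$ with Lie algebra $\kg$. This gives surjectivity of $\varphi_0\colon K\times\pg\to G_0$, $(k,X)\mapsto k\exp_{G_0}X$; injectivity is immediate from $\varphi_0(k,X)^*\varphi_0(k,X)=\exp(2X)$ and injectivity of $\exp$ on self-adjoint operators; and the explicit inverse $g\mapsto\bigl(g\exp(-\tfrac12\log(g^*g)),\,\tfrac12\log(g^*g)\bigr)$ is smooth by smoothness of the functional calculus, so $\varphi_0$ is a diffeomorphism. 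Since a unitary $k\exp X$ with $X\in\pg$ must have $\exp(2X)=\1$, i.e.\ $X=0$, we also get $G_0\cap\U(\Hc)=K$, so $g\mapsto(g^*)^{-1}$ restricts to an involutive automorphism $\Theta_0$ of $G_0$ with $\Lie(\Theta_0)X=-X^*$ and fixed-point set $K$.

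To pass to $G$, let $e\colon G\to G_0$ be the covering homomorphism. Proposition~\ref{LL2}, applied to $e$, the Banach-Lie subgroup $K\subseteq G_0$, and the decomposition $\gg=\kg\dotplus\pg$ established above, shows that $\widetilde K:=e^{-1}(K)$ is a Banach-Lie subgroup of $G$ and that $(\widetilde k,\widetilde X)\mapsto\widetilde k\exp_G\widetilde X$ is a diffeomorphism $\widetilde K\times\widetilde\pg\to G$ with $\widetilde\pg=\Lie(e)^{-1}(\pg)=\pg$. As $G$ is connected and $\widetilde\pg$ is a vector space, this forces $\widetilde K$ to be connected, hence equal to the connected Banach-Lie subgroup $K$ with Lie algebra $\kg$; this is the asserted Cartan decomposition of $G$. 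Finally, Proposition~\ref{LL3} applied to $G$, $K$, and $\theta\in\Aut(\gg)$, $\theta(X)=-X^*$ --- for which $\theta^2=\id_\gg$, $\theta$ preserves the bracket (from $[X^*,Y^*]=-[X,Y]^*$) and $\gg$ (property~(i) of a $\Phi$-reductive Lie algebra), $\Ker(\theta-\id_\gg)=\kg=\Lie(K)$, $\Ker(\theta+\id_\gg)=\pg$, and $\varphi\colon K\times\pg\to G$ is a diffeomorphism --- yields the unique $\Theta\in\Aut(G)$ with $\Lie(\Theta)X=-X^*$ and $K=\{g\in G\mid\Theta(g)=g\}$. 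The only real obstacle is the linear case, and inside it precisely the verification that $\log(g^*g)\in\gg$: this is what makes $\varphi_0$ surjective, and it is the one step where the explicit form of the classical groups (not merely $\Phi$-reductivity) is used; everything else is standard polar-decomposition functional analysis together with the formal bookkeeping of the lifting propositions.
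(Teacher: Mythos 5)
Your proposal is correct and follows the same route as the paper's proof: reduction to the linear case via Proposition~\ref{LL2}, a polar-decomposition argument for the classical linear groups (the paper delegates this to the method of Proposition~III.8 of~\cite{Ne02b}, whose content is exactly the functional-calculus verification that $\log(g^*g)\in\gg$ which you spell out), and Proposition~\ref{LL3} for the automorphism $\Theta$. The only difference is that you supply in full the details the paper leaves to the citation.
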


\begin{proof}
It follows by Proposition~\ref{LL2} that 
we may assume that $G$ is a $\Phi$-reductive \emph{linear} Banach-Lie group. 
Thus let $G\subseteq\1+\Sg_\Phi^{(0)}(\Hc)$ be 
the connected $\1$-component of the classical Banach-Lie group 
associated with the classical Lie algebra $\gg\subseteq\Sg_\Phi(\Hc)$. 
In this case the assertion can be proved by a method 
similar to the one used in Proposition~III.8 of~\cite{Ne02b} 
in the case of the classical groups associated with the Schatten ideals. 
The existence of the automorphism $\Theta\in\Aut(G)$ 
as asserted follows by an application of Proposition~\ref{LL3}. 
\end{proof}

The following theorem states that there exist Iwasawa decompositions 
for the $\Phi$-reductive Banach-Lie groups corresponding 
to the classical Lie algebras, 
provided the symmetric norming function $\Phi$ satisfies 
some reasonable conditions. 

\begin{theorem}\label{iwasawa}
Let $\Phi$ be a mononormalizing symmetric norming function 
whose Boyd indices are nontrivial and   
let $G$ be a $\Phi$-reductive Banach-Lie group 
whose Lie algebra $\gg\subseteq\Sg_\Phi(\Hc)$ 
is one of the classical Lie algebras associated with $\Phi$. 
Denote $\kg=\{X\in\gg\mid X^*=-X\}$, 
$\pg=\{X\in\gg\mid X^*=X\}$, 
and let $K$ be the connected Banach-Lie group 
corresponding to the Lie subalgebra $\kg$ of $\gg$. 
Then there exists $X_0\in\pg$ with the following properties: 
\begin{itemize}
\item[{\rm(a)}] The set $\Ec_{X_0}$ of spectral projections 
of the self-adjoint operator $X_0$ corresponding to the intervals 
of the form $(0,t]$ with $t\in{\mathbb R}$ 
determines a direct sum decomposition 
$\gg=\kg\dotplus\ag_{X_0}\dotplus\n_{X_0} $, 
where $\ag_{X_0}=\{X\in\pg\mid [X,X_0]=0\}$ and 
$\n_{X_0}=\{X\in\gg\cap\Alg(\Ec_{X_0})\mid X(e(\Hc))\ne e(\Hc)
\text{ if }0\ne e\in\Ec_{X_0}\}$.
\item[{\rm(b)}] If we denote by $A$ and $N$ the connected Banach-Lie groups 
corresponding to the closed subalgebras $\ag_{X_0}$ and $\n_{X_0}$ of $\gg$, 
then 
the multiplication map
$\m\colon K\times A\times N\to G$
is a diffeomorphism. 
Moreover,  $A$ and $N$ are simply connected Banach-Lie subgroups of $G$ 
and $AN=NA$. 
\end{itemize}
\end{theorem}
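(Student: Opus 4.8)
The plan is to reduce to the case of a $\Phi$-reductive \emph{linear} Banach-Lie group and then to read off the Iwasawa data from the triangular integrals of Example~\ref{tri3} and the triangular factorizations of Corollaries~\ref{fact1} and~\ref{fact2}. By Proposition~\ref{lift_iw} it suffices to treat the case in which $G\subseteq\1+\Sg_\Phi^{(0)}(\Hc)$ is the connected $\1$-component of the classical Banach-Lie group with Lie algebra $\gg$, since both the simple connectedness of $A$ and $N$ and the relation $AN=NA$ are preserved on passing to and from covering groups, while part~(a) is purely a statement about $\gg$. The first thing I would do is produce a suitable $X_0$: one exhibits a self-adjoint operator $X_0\in\pg$ which is \emph{regular}, in the sense that its centralizer $\ag_{X_0}=\{X\in\pg\mid[X,X_0]=0\}$ is a maximal abelian subspace of $\pg$, whose spectrum accumulates only at $0$, and with $\{s_j(X_0)\}_{j\ge1}$ in the sequence space of $\Phi$ (so that $X_0\in\Sg_\Phi^{(0)}(\Hc)$). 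Moreover $X_0$ has to be chosen on an orthonormal basis adapted to the conjugation, anti-conjugation, or fundamental symmetry that singles out the type of $\gg$ in Example~\ref{classical}, so that the nest $\Ec_{X_0}$ of spectral projections of $X_0$ for the intervals $(0,t]$, completed by $\1$, is compatible with that symmetry. In the $\gl$-case one simply takes $X_0$ diagonal with distinct, strictly decreasing positive eigenvalues, and $\Ec_{X_0}$ is the standard nest; in the remaining cases the construction is modelled on the finite-dimensional Iwasawa construction for the classical groups (cf.~\cite{Kn96}), where the relevant symmetry carries the flag $\{e(\Hc)\}_{e\in\Ec_{X_0}}$ to its dual flag.

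Granting such an $X_0$, part~(a) follows from the triangular integrals. Here $\ag_{X_0}$ consists of the $\Ec_{X_0}$-diagonal self-adjoint elements of $\gg$ and $\n_{X_0}$ of the $\Ec_{X_0}$-strictly upper triangular elements of $\gg$; because the nest is adapted to the symmetry, the truncation operators $\Dc_\Pc,\Uc_\Pc,\Lc_\Pc$ — hence, by Example~\ref{tri3}, their limits $\Dc,\Uc,\Lc$ — map $\gg$ into itself, and every $X\in\gg$ splits as $X=\Lc(X)+\Dc(X)+\Uc(X)$. Writing $\Dc(X)=\Dc(X)_{\mathrm{sa}}+\Dc(X)_{\mathrm{skew}}$ for the decomposition of the diagonal part into its self-adjoint and skew-adjoint components and using the identities $\Lc(X^*)=\Uc(X)^*$ and $\Dc(X^*)=\Dc(X)^*$ from Proposition~\ref{tri2}(b) (recall that $X^*\in\gg$), one checks that
\[
X=\bigl(\Lc(X)-\Lc(X)^*+\Dc(X)_{\mathrm{skew}}\bigr)+\Dc(X)_{\mathrm{sa}}+\bigl(\Uc(X)+\Lc(X)^*\bigr)
\]
displays the three summands as lying in $\kg$, $\ag_{X_0}$, and $\n_{X_0}$ respectively — where the identification of $\kg$ and $\pg$ with the $\mp1$-eigenspaces of $X\mapsto X^*$ is the one from Theorem~\ref{cartan} — and that the decomposition is direct, since applying first $\Dc$ and then $\Lc$ to a relation $Y+A_0+Z=0$ with $Y\in\kg$, $A_0\in\ag_{X_0}$, $Z\in\n_{X_0}$ forces each term to vanish.

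For part~(b), put $A=\exp_G\ag_{X_0}$ and $N=\exp_G\n_{X_0}=\1+\n_{X_0}$. Since $\ag_{X_0}$ is abelian and consists of self-adjoint operators, $\exp_G|_{\ag_{X_0}}$ is injective (a self-adjoint $X$ with $\exp_GX=\1$ has spectrum $\{0\}$, so $X=0$) and is therefore a diffeomorphism onto $A$; since $\n_{X_0}$ consists of operators with spectrum $\{0\}$ and is closed under multiplication, $\exp_G|_{\n_{X_0}}$ is a diffeomorphism onto $N$. Hence $A$ and $N$ are simply connected, being diffeomorphic to Banach spaces, and $AN=NA$ because $A$ normalizes $N$ and $A\cap N=\{\1\}$. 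For the multiplication map, given $g\in G$ apply Corollary~\ref{fact2} with $\Ec=\Ec_{X_0}$ to write $g=ub$ with $u$ unitary and $b\in\GL_\Phi(\Hc)\cap\Alg\Ec_{X_0}$; since the defining symmetry of $\gg$ is $\ast$-stable, the uniqueness in Corollary~\ref{fact1} (applied to $g^*g$) forces $b\in G$, whence $u=gb^{-1}\in G\cap\U_\Phi(\Hc)=K$ by the Cartan decomposition of Theorem~\ref{cartan}. Writing $a:=\Dc(b)$, which is a positive $\Ec_{X_0}$-diagonal operator lying in $G$, hence in $A$, and $\1+\nu:=a^{-1}b\in N$, one obtains $g=u\,a\,(\1+\nu)\in KAN$. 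Uniqueness of this factorization follows from $K\cap AN=\{\1\}$ (a triangular operator with positive invertible diagonal which is unitary must be $\1$) and $A\cap N=\{\1\}$; and since by part~(a) the differential of $\m$ at every point is a linear isomorphism, $\m$ is a local diffeomorphism, hence — being bijective — a diffeomorphism.

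The hard part will be everything packed into the choice of $X_0$: constructing, in each of the types (A)--(CII) of Example~\ref{classical}, a regular element $X_0\in\pg$ whose spectral nest $\Ec_{X_0}$ is adapted to the corresponding symmetry, so that simultaneously $\ag_{X_0}$ comes out \emph{maximal} abelian in $\pg$ and the truncation operators leave $\gg$ invariant — the latter being precisely what allows one to run Example~\ref{tri3} inside $\gg$ rather than merely inside $\Sg_\Phi(\Hc)$. Once $X_0$ is in hand, all the analytic content — convergence of the triangular integrals on $\Sg_\Phi$, and the existence and uniqueness of the triangular factorization — is supplied by Example~\ref{tri3} and Corollaries~\ref{fact1} and~\ref{fact2}, so no new estimates are needed.
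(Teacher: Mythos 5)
Your proposal follows essentially the same route as the paper: reduce to the linear case via Proposition~\ref{lift_iw}, obtain the Lie algebra decomposition in (a) from the triangular integrals of Example~\ref{tri3}, and obtain the global factorization in (b) from Corollary~\ref{fact2}, treating $\GL_\Phi(\Hc)$ first and then the other classical types via the defining symmetries. The paper's own proof is only this same outline with the details (including the construction of $X_0$ adapted to each type, which you rightly flag as the hard part) deferred to \cite{Be07}, so your sketch is consistent with it and in fact somewhat more explicit.
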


\begin{proof}
Proposition~\ref{lift_iw} allows us to assume that $G$ is 
actually a $\Phi$-reductive \emph{linear} Banach-Lie group. 
The idea of the proof in this case is to start by studying 
the group $G=\GL_\Phi(\Hc)$, which is the largest classical group. 
For this group, the construction of a local Iwasawa decomposition 
of its Lie algebra (assertion~(a)) relies on local spectral theory 
(see \cite{BS01}) along with the properties 
of triangular integrals established in Example~\ref{tri3}. 
As regards the corresponding global Iwasawa decomposition 
(assertion~(b)), one uses Corollary~\ref{fact2}.
See \cite{Be07} for details. 
\end{proof}

\begin{remark}\label{am}
\normalfont
The fundamental groups of the classical Banach-Lie groups 
associated with $\Phi$ 
are always abelian (see \cite{dlH72} and \cite{Ne02b}). 
It then easily follows by Remark~\ref{me6} along with 
Examples \ref{me3} through \ref{me5} that all of the groups 
$K$, $A$, and $N$ that occur in Theorem~\ref{iwasawa} are amenable, 
although the group $G$ itself may not be amenable (see Example~\ref{me5}(a)). 
\qed
\end{remark}

\noindent\textbf{Harish-Chandra decompositions.}
We are going to draw a little closer 
to representation theory, 
which was the main motivation of the present exposition. 
For this purpose we borrow the following definition of 
infinite-dimensional Lie groups of Harish-Chandra type from \cite{NO98}. 
Some good references for such Harish-Chandra decompositions 
in the setting of finite-dimensional Lie groups are 
\cite{Sa80}, \cite{Kn96}, and \cite{Ne00}.  

\begin{definition}\label{HC}
\normalfont
By Banach-Lie group of \emph{Harish-Chandra type} 
we actually mean a 4-tuple $(G,G^{\mathbb C},K,K^{\mathbb C})$ 
consisting of a connected complex Banach-Lie group $G^{\mathbb C}$, 
and and three connected Banach-Lie subgroups $G$, $K$, and $K^{\mathbb C}$ of 
$G^{\mathbb C}$ such that the following conditions are satisfied: 
\begin{itemize} 
\item[{\rm(i)}] 
   the Lie algebra $\gg^{\mathbb C}$ of $G^{\mathbb C}$ is 
the complexification of the Lie algebra $\gg$ of $G$; 
\item[{\rm(ii)}] 
$K^{\mathbb C}$ is a complex Banach-Lie subgroup of $G^{\mathbb C}$  
and the Lie algebra $\kg^{\mathbb C}$ of $K^{\mathbb C}$ is 
the complexification of the Lie algebra $\kg$ of $K$; 
\item[{\rm(iii)}]\, 
there exist connected complex Banach-Lie subgroups $P^{\pm}$ 
of $G^{\mathbb C}$ 
whose Lie algebras $\pg^{\pm}$ have the properties 
$(\ad\,\pg^{\pm})^n\gg^{\mathbb C}=\{0\}$ 
for some integer $n\ge1$ and $\pg^{\pm}\cap\zg(\gg)=\{0\}$, 
where $\zg(\gg^{\mathbb C})$ denotes the center of $\gg^{\mathbb C}$, 
and moreover:  
\begin{itemize}
\item[{\rm(HC1)}]\qquad
we have the direct sum decomposition 
$\gg^{\mathbb C}=\pg^{+}\dotplus\kg^{\mathbb C}\dotplus\pg^{-}$, 
and in addition $[\kg^{\mathbb C},\pg^{\pm}]\subseteq\pg^{\pm}$ and 
$\overline{\pg^{-}}=\pg^{+}$, 
where $X\mapsto\overline{X}$, $\gg^{\mathbb C}\to\gg^{\mathbb C}$, 
is the antilinear involutive map whose fixed-point set is $\gg$;  
\item[{\rm(HC2)}] \qquad
the multiplication mapping 
$P^{+}\times K^{\mathbb C}\times P^{-}\to G^{\mathbb C}$, 
$(p_{+},k,p_{-})\mapsto p_{+}kp_{-}$, 
is a biholomorphic diffeomorphism onto its open image; 
\item[{\rm(HC3)}] \qquad
we have 
$G\subseteq P^{+}K^{\mathbb C}P^{-}$ and 
$G\cap K^{\mathbb C}P^{-}=K$. 
\end{itemize} 
\end{itemize} 
If the groups $G^{\mathbb C}$, $K$, and $K^{\mathbb C}$ 
are singled out in a certain context, 
then we may say that the group $G$ itself is a 
Banach-Lie group of Harish-Chandra type.  
\qed
\end{definition}

\begin{example}\label{SU}
\normalfont
Let $\Phi$ be a symmetric norming function and   
$\Hc=\Hc_{+}\oplus\Hc_{-}$. 
Then the corresponding 
classical real Banach-Lie group of type~(AIII) 
---in the terminology of Example~\ref{classical}--- 
provides an example of group of Harish-Chandra type. 
Specifically, we mean the 4-tuple $(G,G^{\mathbb C},K,K^{\mathbb C})$,   
where 
$$\begin{aligned}
G&=\U_\Phi(\Hc_{+},\Hc_{-}), 
\quad K=\{k\in\U_\Phi(\Hc_{+},\Hc_{-})\mid k(\Hc_{\pm})\subseteq\Hc_{\pm}\}\\
G^{\mathbb C}&=\GL_\Phi(\Hc), \qquad\,
K^{\mathbb C}=\{g\in\GL_\Phi(\Hc)\mid g(\Hc_{\pm})\subseteq\Hc_{\pm}\}.
\end{aligned}
$$
Then the conditions of Definition~\ref{HC} are satisfied 
with the connected complex Banach-Lie subgroups 
$P^{\pm}=\{g\in\GL_\Phi(\Hc)\mid(g-\1)\Hc_{\mp}\subseteq\Hc_{\pm}\}$. 
If we write the operators on $\Hc$ as $2\times 2$ block matrices 
with respect to the orthogonal decomposition $\Hc=\Hc_{+}\oplus\Hc_{-}$, 
then 
$$G\subseteq P^{+}K^{\mathbb C}P^{-}
=\Bigl\{g=\begin{pmatrix} A & B \\ C & D\end{pmatrix}\in\GL_\Phi(\Hc)\mid 
D\in\GL(\Hc_{-})\Bigr\} 
$$
and every element in this set can be factorized as 
$$\begin{pmatrix} A & B \\ C & D\end{pmatrix}
=
\begin{pmatrix} \1 & & BD^{-1} \\ 0 & &  \1\end{pmatrix}
\begin{pmatrix} A-BD^{-1}C & & 0 \\ 0 & & D\end{pmatrix}
\begin{pmatrix} \1 & & 0 \\ D^{-1}C & & \1\end{pmatrix}, $$
where the factors in the right-hand side belong 
to $P^{+}$, $K^{\mathbb C}$, and $P^{-}$, respectively. 
We refer to \cite{NO98} for more details as well as for 
similar examples provided by groups of type (BII)~and~(CI) 
(again in the terminology of Example~\ref{classical} above).
\qed
\end{example}

\begin{remark}\label{HC_more}
\normalfont
As noted in \cite{NO98} (see also \cite{Ne00}), 
the objects involved in the definition of a group of Harish-Chandra type 
(Definition~\ref{HC} above) have the following additional properties: 
\begin{itemize}
\item[{\rm(a)}] 
 If we denote $\Lie(P^{\pm})=\pg^{\pm}$, 
 then the exponential maps $\exp_{P^{\pm}}\colon\pg^{\pm}\to P^{\pm}$ 
 are biholomorphic diffeomorphisms and 
  the complex Banach-Lie groups $P^{\pm}$ are nilpotent and simply connected. 
In particular there exist the logarithm maps 
$\log_{P^{\pm}}=(\exp_{P^{\pm}})^{-1}\colon P^{\pm}\to\pg^{\pm}$. 
\item[{\rm(b)}] 
 There exists an open connected $K$-invariant subset $\Omega\subseteq\pg^{+}$ 
 such that the mapping 
 $\Omega\times K^{\mathbb C}P^{-}\to GK^{\mathbb C}P^{-}$, 
 $(Z,p)\mapsto(\exp_{G^{\mathbb C}}Z)p$, 
 is a well-defined biholomorphic diffeomorphism. 
\item[{\rm(c)}]
 Let $\zeta^{\pm}\colon P^{+}K^{\mathbb C}P^{-}\to P^{\pm}$ 
and $\kappa\colon P^{+}K^{\mathbb C}P^{-}\to K^{\mathbb C}$ 
be the natural projections (see condition~(HC2) in Definition~\ref{HC}) 
and 
$\Xi=\{(g,Z)\in G^{\mathbb C}\times\pg^{+}\mid 
g\exp_{G^{\mathbb C}}Z\in P^{+}K^{\mathbb C}P^{-}\}$, 
which is an open neighborhood of $G\times\Omega$ in 
 $G^{\mathbb C}\times\pg^{+}$. 
Define 
$\Xi\to\pg^{+}$, 
$(g,Z)\mapsto g.Z=\log_{P^{+}}(\zeta^{+}(g\exp_{G^{\mathbb C}}Z))$ 
and 
$J\colon\Xi\to K$, 
$(g,Z)\mapsto J(g,Z)=\kappa(g\exp_{G^{\mathbb C}}Z)$.
Then  the mapping 
$(g,Z)\mapsto g.Z$ 
defines a transitive action of $G$ upon $\Omega$ by 
biholomorphic diffeomorphisms 
and $J$ has a cocycle property with respect to this action. 
\item[{\rm(d)}]
We have $0\in\Omega$ and the isotropy group of $0$ is equal to $K$. 
Thus the aforementioned action leads to a 
$G$-equivariant diffeomorphism $G/K\simeq\Omega$. 
\end{itemize}
These are some of the ideas that allow one 
to define as in \cite{NO98} 
natural reproducing kernels on $\Omega$ 
out of certain representations of the Banach-Lie group $K$.  
In this way one ends up with 
the corresponding reproducing kernel Hilbert spaces of holomorphic functions 
on $\Omega$ which carry representations of the bigger Banach-Lie group $G$ 
of Harish-Chandra type. 
\qed
\end{remark}

\begin{remark}\label{final}
\normalfont
Applications of a different type of reproducing kernels 
in representation theory of Banach-Lie groups 
can be found in \cite{BR06} and \cite{BG07}. 
The viewpoint held in the first of these papers is 
somehow dual to the one of Remark~\ref{HC_more} 
(in the sense of duality theory of symmetric spaces), 
i.e., the bases of the corresponding 
vector bundles are homogeneous spaces of compact type. 
The complexification of this picture of compact type 
is analyzed in \cite{BG07} along with the relationship 
to Stinespring dilation theory, which eventually leads to 
geometric models for representations of operator algebras. 
\qed
\end{remark}

\noindent\textbf{Acknowledgments.} 
We wish to thank Professor Jos\'e Gal\'e for drawing our attention to 
some pertinent references, 
to 
Professor Hendrik Grundling for kindly sending us the manuscript~\cite{Gr}, 
and to Professor Gary Weiss for comments that helped us to improve 
the exposition. 
Partial support from the grant GR202/2006 
(CNCSIS code 813) is acknowledged.

\printindex

\begin{thebibliography}{9999.}

\bibitem[Ar78]{Ar78}
J.~Arazy, 
Some remarks on interpolation theorems and the boundness of 
the triangular projection in unitary matrix spaces, 
\textit{Integral Equations Operator Theory} 
\textbf{1} (1978), no.~4, 453--495.

\bibitem[Ba83]{Ba83} 
W.~Banaszczyk, 
On the existence of exotic Banach-Lie groups, 
\textit{Math. Ann.} \textbf{264} (1983), no.~4, 485--493. 

\bibitem[Be06]{Be06}
D.~Belti\c t\u a, 
\textit{Smooth Homogeneous Structures in Operator Theory}, 
Monogr. and Surveys in Pure and Appl.~Math., 137. 
Chapman \& Hall/CRC Press, 
Boca Raton-London-New York-Singapore, 2006.

\bibitem[Be07]{Be07}
D.~Belti\c t\u a, 
Iwasawa decompositions of some infinite-dimensional Lie groups 
(submitted for publication). 
(See \textit{preprint} math.RT/0701404.) 

\bibitem[BG07]{BG07} 
D.~Belti\c t\u a, J.E.~Gal\'e, 
Holomorphic geometric models for representations of $C^*$-algebras, 
\textit{preprint}, 2007. 

\bibitem[BP07]{BP05}
D.~Belti\c t\u a, B.~Prunaru,
Amenability, completely bounded projections,
dynamical systems and smooth orbits,
\textit{Integral Equations Operator Theory} 
\textbf{57} (2007), no.~1, 1--17. 

\bibitem[BR05]{BR05}
D.~Belti\c t\u a, T.S.~Ratiu,
 Symplectic leaves in real Banach Lie-Poisson spaces, 
\textit{Geom. Funct. Anal.} 
\textbf{15} (2005), no.~4, 753--779. 

\bibitem[BR07]{BR06}
D.~Belti\c t\u a, T.S.~Ratiu, 
Geometric representation theory for unitary groups of operator algebras, 
\textit{Adv. Math.} \textbf{208} (2007), no.~1, 299--317.

\bibitem[BS01]{BS01}
D.~Belti\c t\u a, M.~\c Sabac,
{\it Lie Algebras of Bounded Operators},
Operator Theory: Advances and Applications, 120. Birkh\"auser Verlag,
Basel, 2001.

\bibitem[Bo80]{Bo80}
R.P.~Boyer,
Representation theory of the Hilbert-Lie group
${\rm U}({\mathfrak H})_2$,
\textit{Duke Math. J.} \textbf{47} (1980), no.~2,
325--344.

\bibitem[Bu50]{Bu50}
R.C.~Buck, 
Generalized group algebras, 
\textit{Proc. Nat. Acad. Sci. USA} \textbf{36} (1950), 747--749.

\bibitem[Bu52]{Bu52}
R.C.~Buck,
Operator algebras and dual spaces, 
\textit{Proc. Amer. Math. Soc.} \textbf{3} (1952), 681--687.

\bibitem[DFWW04]{DFWW04}
K. Dykema, T. Figiel, G. Weiss, M. Wodzicki,
Commutator structure of operator ideals,
{\it Adv. Math.} 
{\bf 185}(2004),  no. 1, 1--79.

\bibitem[Er72]{Er72}
J.A.~Erdos,
The triangular factorization of operators on Hilbert space,  
\textit{Indiana Univ. Math. J.} 
\textbf{22} (1972/73), 939--950.

\bibitem[Er78]{Er78}
J.A.~Erdos, 
Triangular integration on symmetrically normed ideals, 
\textit{Indiana Univ. Math. J.} 
\textbf{27} (1978), no.~3, 401--408.

\bibitem[Ey72]{Ey72}
P.~Eymard, 
\textit{Moyennes Invariantes et Repr\'esentations Unitaires},  
Lecture Notes in Math.~300, 
Springer-Verlag, Berlin, 1972. 

\bibitem[Ga06]{Ga06}
J.E.~Gal\'e, 
Geometr\'\i a de \'orbitas de representaciones de grupos y \'algebras  
promediables, 
\textit{Rev. R. Acad. Cienc. Exactas F\'\i s. Qu\'\i m. Nat. Zaragoza (2)} 
\textbf{61} (2006), 7--46.  

\bibitem[GK69]{GK69}
I.C.~Gohberg, M.G.~Kre\u\i n,
\textit{Introduction to the Theory of Linear Nonselfadjoint 
Operators}, 
Transl.~Math.~Monogr., 18, 
Amer.~Math.~Soc., Providence, RI, 1969.

\bibitem[GK70]{GK70}
I.C.~Gohberg, M.G.~Kre\u\i n, 
\textit{Theory and Applications of Volterra Operators in Hilbert Space},  
Transl.~Math.~Monogr., 24, 
Amer.~Math.~Soc., Providence, RI, 1970. 

\bibitem[Gr]{Gr} 
H.~Grundling, unpublished manuscript. 

\bibitem[Gr05]{Gr05}
H.~Grundling, 
Generalising group algebras,  
\textit{J. London Math. Soc. (2)} 
\textbf{72} (2005), no.~3, 742--762.

\bibitem[GN07]{GN07}
H.~Grundling, K.-H.~Neeb, 
Abelian topological groups with host algebras, 
\textit{J. London Math. Soc. (2)} (to appear). 
(See \textit{preprint} math.OA/0605413.) 

\bibitem[dlH72]{dlH72}
P.~de la Harpe, 
\textit{Classical Banach-Lie Algebras and Banach-Lie Groups 
of Operators in Hilbert Space}, 
Lecture Notes in Math.~285, Springer-Verlag, 
Berlin, 1972. 

\bibitem[HR63]{HR63}
E.~Hewitt, K.A.~Ross, 
\textit{Abstract Harmonic Analysis}, vol.~I. 
Grundl. math. Wiss., Bd.~115.  
Springer-Verlag, Berlin, 1963. 

\bibitem[KW07]{KW07} 
V.~Kaftal, G.~Weiss, 
A survey on the interplay between arithmetic mean ideals, 
traces, lattices of operator ideals, 
and an infinite Schur-Horn majorization theorem, 
\textit{preprint}, 2007.

\bibitem[Kn96]{Kn96}
A.W.~Knapp, 
\textit{Lie Groups Beyond an Introduction}, 
Progr. Math., 140, Birkh\"auser-Verlag, 
Boston-Basel-Berlin, 1996.

\bibitem[Ne98]{Ne98}
K.-H.~Neeb, 
Holomorphic highest weight representations of
   infinite-dimen\-sional complex classical groups, 
\textit{J. reine angew. Math.}
\textbf{497} (1998), 171--222.

\bibitem[Ne00]{Ne00}
K.-H.~Neeb, 
\textit{Holomorphy and Convexity in Lie Theory}, 
%de Gruyter 
Expositions in Mathematics, 28. 
Walter de Gruyter \& Co., Berlin-New York, 2000.

\bibitem[Ne02a]{Ne02b}
K.-H.~Neeb, 
Classical Hilbert-Lie groups, their extensions 
and their homotopy groups, 
in:   
{\it Geometry and Analysis on Finite and Infinite-Dimensional Lie Groups 
(B\c ed\l ewo, 2000)},
Banach Center Publ., 55, Polish Acad. Sci. 
Warsaw, 
2002, pp.~87--151.

\bibitem[Ne02b]{Ne02c}
K.-H.~Neeb,
A Cartan-Hadamard theorem for Banach-Finsler manifolds,  
\textit{Geom. Dedicata} \textbf{95} (2002), 115--156. 

\bibitem[Ne04]{Ne04}
K.-H.~Neeb, 
Infinite-dimensional groups and their representations,
in: \textit{Lie Theory},
Progr. Math., 228, Birk\-h\"auser, Boston, MA, 2004,
pp.~213--328.

\bibitem[Ne06]{Ne06}
K.-H.~Neeb, 
Towards a Lie theory of locally convex groups, 
\textit{Japan. J. Math. (3rd series)} \textbf{1} (2006), no.~2, 291--468. 

\bibitem[N\O 98]{NO98}
K.-H.~Neeb, B.~\O rsted, 
Unitary highest weight representations in Hilbert spaces 
of holomorphic functions on infinite-dimensional domains,  
\textit{J. Funct. Anal.} \textbf{156} (1998),
no.~1, 263--300.

\bibitem[Pa88]{Pa88}
A.L.T.~Paterson, 
\textit{Amenability}, 
Mathematical Surveys and Monographs, 29. 
American Mathematical Society, Providence, RI, 1988.

\bibitem[Pe06]{Pe06}
V.~Pestov, 
\textit{Dynamics of Infinite-Dimensional Groups. 
The Ramsey-Dvo\-retz\-ky-Milman Phenomenon},   
University Lecture
Series, 40. American Mathematical Society, Providence, RI, 2006.

\bibitem[Pi90]{Pi90}
D.~Pickrell, 
Separable representations for automorphism groups of 
infinite symmetric spaces, 
\textit{J. Funct. Anal.} 
\textbf{90} (1990), no.~1, 1--26.

\bibitem[Sa80]{Sa80}
I.~Satake, 
\textit{Algebraic Structures of Symmetric Domains}, 
Kano Mem. Lectures, 4. 
Publ. Math. Soc. Japan, 14. 
Iwanami Shoten \& Princeton Univ. Press, Tokyo-Princeton, NJ, 1980. 

\bibitem[SV75]{SV75}
\c S.~Str\u atil\u a, D.~Voiculescu, 
\textit{Representations of AF-algebras and of 
the group $\text{U}(\infty)$}, 
Lecture Notes in
Math., 486. Springer-Verlag, Berlin-New York, 1975.

\bibitem[Te63]{Te63}
S.~Teleman, 
Contribution \`a l'analyse harmonique sur un groupe topologique quelconque, 
\textit{C. R. Acad. Sci. Paris} \textbf{257} (1963), 2227--2230. 

\bibitem[Up85]{Up85} 
H.~Upmeier, 
\textit{Symmetric Banach Manifolds and Jordan $C^*$-Algebras}, 
Math. Stud., 104, Notas de Mat., 96, 
North-Holland, Amsterdam, 1985. 


\bibitem[Vo00]{Vo00}
D.A.~Vogan, Jr., 
The method of coadjoint orbits for real reductive groups, 
in: \textit{Representation Theory of Lie Groups (Park City, 1998)}, 
IAS/ Park City Math. Ser. 8, Amer. Math. Soc., Providence, RI, 2000, 
pp.~177-238.

\bibitem[We40]{We40} 
A.~Weil, 
\textit{L'Int\`egration dans les Groupes Topologiques et ses Applications},  
Actual. Sci. Ind., 869. Hermann et Cie., Paris, 1940.

\end{thebibliography}
\end{document}